\DeclareMathAlphabet{\mathpzc}{OT1}{pzc}{m}{it}
\let\over\@@over\makeatother
\numberwithin{equation}{section}
\theoremstyle{plain} 
\newtheorem{theorem}{Theorem}[section] 
\newtheorem{proposition}[theorem]{Proposition} 
\newtheorem{corollary}[theorem]{Corollary}
\newtheorem{lemma}[theorem]{Lemma} 
\theoremstyle{remark}
\newtheorem{remark}[theorem]{Remark}
\theoremstyle{definition}
\newcommand{\be}{\begin{equation}}
\newcommand{\ee}{\end{equation}}%
\newcommand{\bse}{\begin{subequations}}
\newcommand{\ese}{\end{subequations}}
\newcommand{\dist}{\operatorname{dist}}
\newcommand{\realpart}{\operatorname{Re}}
\newcommand{\imagpart}{\operatorname{Im}}
\newcommand{\range}{\operatorname{rng}}
\newcommand{\lspan}{\operatorname{span}}
\newcommand{\codim}{\operatorname{codim}}
\newcommand{\id}{\operatorname{id}}
\newcommand{\placeholder}{\,\cdot\,}
\newcommand{\n}[2][]{#1\lVert #2 #1\rVert}
\newcommand{\dell}{\partial}
\newcommand{\loc}{{\mathrm{loc}} }
\newcommand\A{\mathscr A}    
\newcommand\B{\mathscr B}    
\newcommand\F{\mathscr F}    
\newcommand\Nconf{N_{\mathrm{c}}}
\newcommand\Nvel{N_{\mathrm{v}}}
\newcommand\Wspace{\mathscr W}
\newcommand\Xspace{\mathscr X}
\newcommand\Yspace{\mathscr Y}
\newcommand{\cm}{{\mathscr C}}
\newcommand\fluidD{\mathscr{D}}
\newcommand{\confD}{\mathcal{D}}
\newcommand{\fullparam}{\Lambda}
\newcommand{\param}{\lambda}
\newcommand{\proj}{P}
\newcommand{\imagimag}{{\mathrm{ii}} }        
\newcommand{\realimag}{{\mathrm{ri}} }
\begin{document}

\title[Periodic hollow vortices]{Global bifurcation of hollow vortex streets}

\date{\today}

\author[V.N. Oikonomou]{Vasileios N. Oikonomou}

\address{Department of Mathematics, University of Missouri, Columbia, MO 65211}
\email{oikonomou@missouri.edu}
\author[S. Walsh]{Samuel Walsh,}
\address{Department of Mathematics, University of Missouri, Columbia, MO 65211} 
\email{walshsa@missouri.edu} 

\begin{abstract}
Vortex streets are periodic configurations of vortices propagating through an irrotational flow. In this paper, we study streets of hollow vortices, which are solutions to the free boundary $2$-d irrotational incompressible Euler equations. Each vortex core is a region of constant pressure in the complement of the fluid domain with a nonzero circulation around it.  We prove that any non-degenerate singly-periodic point vortex configuration can be ``desingularized'' to create a global curve of solutions to the steady hollow vortex street problem, and we further characterize the types of singular behavior that can develop as one transverses the curve to its extreme. As specific examples, we study von K\'arm\'an vortex streets, translating vortex arrays, and a two-pair (2P) configuration.  Our method is based on analytic global bifurcation theory and adapts the desingularization technique of Chen, Walsh, and Wheeler~\cite{chen2023desingularization} to the periodic setting.
\end{abstract}

\maketitle

\setcounter{tocdepth}{1}
\tableofcontents

\section{Introduction}
\label{introduction section}

Persistent regions of concentrated vorticity are an important feature of two-dimensional incompressible inviscid motion. Perhaps the most famous example are the \emph{vortex streets} that can develop in the turbulent wake of a blunt body. These take the form of a periodic procession of compact vorticity regions translating at fixed velocity through a background irrotational flow. Similar structures are found in myriad other physical contexts. 

There are several ways to model this behavior mathematically. The simplest is to consider configurations of point vortices governed by the well-known Kirchhoff--Helmholtz equation, which is a finite-dimensional Hamiltonian system. These, however, are too singular to be solutions of the Euler equations even in the weak sense. In this paper, we study \emph{hollow vortices}, which are another classical type of localized vorticity flow.  As we describe in more detail below, they are solutions of the full free boundary incompressible Euler system. One treats the vortex cores as bounded regions of constant pressure in the complement of the fluid domain. The flow in the bulk of the fluid is assumed to be irrotational, but with a nonzero circulation about each core. In that sense, the vorticity is confined to the boundary, which is the union of Jordan curves. This should be contrasted with vortex patches, which are solutions of the incompressible Euler equations in the plane for which the vorticity is supported on a disjoint union of bounded open sets. 

Hollow vortices have been investigated since the 19th century~\cite{pocklington1895} and have recently enjoyed considerable renewed interest. Explicit linear arrays of hollow vortices were found by Baker, Saffman, and Sheffield~\cite{baker1976structure}.  A variety of steady hollow vortex configurations, both periodic and not, have been constructed using tools from special function theory~\cite{Crowdy_Nelson_Krishnamurthy_2021, Crowdy_2014Hollow, crowdy2011analytical, DGLShollowstreet,chen2025finite,chen2025vortex, nelson2021corotating, CrowdyKrish2018Compressible}. Many of these solutions are in fact explicit. Traizet~\cite{traizet2015hollow}, on the other hand, showed that there is a correspondence between certain steady hollow vortex configurations and minimal surfaces. 

In this paper, we instead take a bifurcation theoretic approach, adapting the vortex desingularization and global continuation method introduced in~\cite{chen2023desingularization,chen2025finite} to the periodic case. This entails many nontrivial changes to the general framework in both the analytical and geometric settings. Our main result states essentially that a generic periodic steady point vortex configuration gives rise to a large family of periodic hollow vortex configurations. As just two applications, we give a new construction of a family of von K\'arm\'an hollow vortex streets and the first existence result for non-perturbative configurations of $2$P of hollow vortices. Both of these connects from point vortices on one end to singular solutions of the free boundary Euler system on the other.

\begin{figure}
\centering
\includegraphics[scale=0.9]{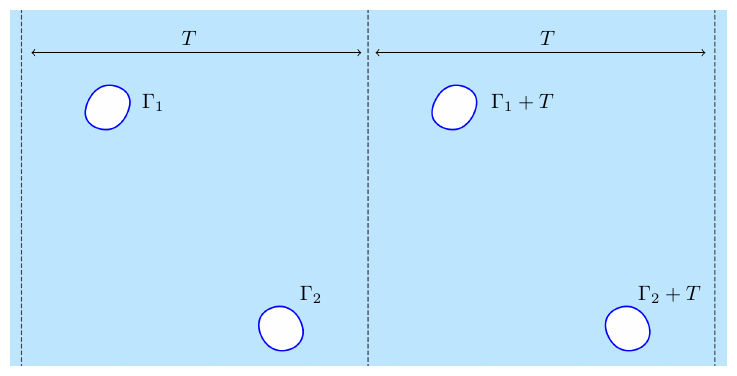}
\caption{An illustration of the physical domain $\mathscr{D}$ in a case that $M=2$. In this case, we have two hollow vortices per period}
\label{physicaldomain}
\end{figure}

\subsection{Governing equations for hollow vortices} \label{governingequations}

Consider an ideal fluid lying in a (time-dependent) planar domain $\mathscr{D}(t)$. The fluid velocity field $\mathbf{u}(t) \colon \mathscr{D}(t) \to \mathbb{R}^2$ is irrotational and obeys the two-dimensional incompressible, Euler equations:  
\begin{subequations} \label{time-dependent Euler equations}
\begin{equation}\label{EulerequationsPDE}
\begin{cases} 
	\partial_t \mathbf{u} + (\mathbf{u}\cdot \nabla) \mathbf{u} =- \nabla P & \text{ in } \mathscr{D}(t) \\
\nabla^{\perp}\cdot \mathbf{u}=0 &\text{ in } \mathscr{D}(t) \\
\nabla \cdot \mathbf{u}=0 &\text{ in } \mathscr{D}(t),
\end{cases}
\end{equation}
where $P(t) \colon \fluidD(t) \to \mathbb{R}$ is the pressure.  Motivated by the vortex street example, we suppose that $\mathbf{u}(t)$, $P(t)$, and $\fluidD(t)$ are $T$-periodic in the horizontal direction; denote by $\fluidD^0(t)$ the restriction of $\fluidD(t)$ to the fundamental periodic strip containing the origin. 

For the hollow vortex model,  $\fluidD(t)$ is an exterior domain with each connected component of its complement representing a single vortex. We assume that there are $1 \leq M < \infty$ vortices per period, and thus $\partial \fluidD^0(t)$ consists of a disjoint union of Jordan curves $\Gamma_1(t), \ldots, \Gamma_M(t)$ ignoring the left and right vertical lines; see Figure~\ref{physicaldomain}). By convention, we always treat $\Gamma_k(t)$ as oriented counter-clockwise.

The \emph{kinematic condition} requires the fluid particles on the boundary stay on the boundary, or equivalently that
\begin{equation}
	\label{time-dependent kinematic condition}
	\mathbf{u} \cdot \mathbf{n} = \mathfrak{v}_k \qquad \textrm{on }  \Gamma_k(t) \qquad \textrm{for } k = 1, \ldots, M,
\end{equation}
where $\mathfrak{v}_k$ is the normal velocity of the $k$-th vortex boundary and $\mathbf{n}$ is the unit outward normal. The \emph{dynamic condition} states that the pressure is continuous over the interface. As each vortex core is taken to be a region of constant (in space) pressure, this becomes
\begin{equation}
	\label{time-dependent dynamic condition}
	P = P_k \quad \textrm{on } \Gamma_k(t) \qquad \textrm{for } k = 1, \ldots, M.
\end{equation}
Here, $P_k = P_k(t)$ is the pressure in the core bounded by $\Gamma_k(t)$. Finally, we assume that the velocity field has a nonzero circulation about each vortex core:
\begin{equation}
	\label{time-dependent circulation condition}
	\int_{\Gamma_k(t)} \mathbf{u}(t) \cdot \mathbf{t} \, ds = \gamma_k \qquad \textrm{for } k = 1, \ldots, M,
\end{equation}
with $\mathbf{t}$ being the unit tangent vector along $\Gamma_k(t)$. The constants (in space and time) $\gamma_1, \ldots, \gamma_M$ physically represent the strength of the corresponding hollow vortices.
\end{subequations}

Structurally, one should think of the hollow vortex system~\eqref{time-dependent Euler equations} as analogous to the classical water wave problem but with a different geometry: rather than having an incompressible fluid region beneath a constant pressure air region, here the fluid inhabits an exterior domain surrounding disjoint constant pressure regions.  In the water waves setting, local well-posedness hinges on the well-known Rayleigh--Taylor sign condition, which requires  
\begin{equation}
  \label{taylor sign condition}
  -\mathbf{n} \cdot \nabla P \ge \kappa > 0 \qquad \textrm{on } \partial \fluidD(t)
\end{equation}
for some positive constant $\kappa$. So long as the sign condition~\eqref{taylor sign condition} holds, one should expect the Cauchy problem for the hollow vortex system~\eqref{time-dependent Euler equations} is likewise (locally) well-posed in sufficiently high regularity Sobolev spaces as a consequence of the analysis in~\cite{shatah2011interface}, for example.

\subsection{Steady hollow vortices}

Our interest here is not in the general time-dependent problem~\eqref{time-dependent Euler equations}, but rather in the existence of special \emph{steady} configurations that are independent of time when viewed in a moving reference frame that is translating with some constant velocity $c \in \mathbb{R}^2$. This situation describes the behavior of a fully developed vortex street far downstream from a blunt body, for instance. Denote by $\fluidD$ the fluid domain in the moving frame, and define $\fluidD^0$, and $\Gamma_1, \ldots, \Gamma_M$ accordingly. Abusing notation, we will from now on write $\mathbf{u}$ and $P$ for the velocity field and pressure in the moving frame, which are hence time-independent functions with domain $\overline{\fluidD}$. Naturally, the core pressures $p_1, \ldots, p_M$ are also taken to be constant.

\begin{subequations}\label{PHVP}

The steady problem can be reformulated quite elegantly using tools from complex analysis. A point $(x,y)\in \mathbb{R}^2$ in the physical domain $\mathscr{D}$ is associated in the usual way with the complex number $z = x+i y\in \mathbb{C}$.  The requirement in~\eqref{EulerequationsPDE} that $\mathbf{u} = (u, v)$ is divergence free and curl free is equivalent to the statement that the complex velocity $\mathsf{u} \colonequals u - iv$ is holomorphic in $\fluidD$. We may then introduce a complex potential
\begin{equation}\label{holomorphicityofvelocity}
\mathsf{w} \colon \mathscr{D}\to \mathbb{C} \quad \text{holomorphic and $T$-periodic}
\end{equation}
such that $\partial_z \mathsf{w} = \mathsf{u}$. Note that $\mathsf{w}$ is necessarily multi-valued, and, by~\eqref{time-dependent circulation condition}, it satisfies
\begin{equation}\label{circulationconditionintro}
\int_{\Gamma_k} \partial_z \mathsf{w} \, dz = \gamma_k \qquad \textrm{for } k = 1, \ldots, M.
\end{equation}
The relative velocity field likewise has a relative complex potential $\mathsf{w}-cz$. One can show that the {kinematic condition}~\eqref{time-dependent kinematic condition} is equivalent to the vortex boundaries being relative streamlines, which stated in terms of the relative potential amounts to requiring that
\begin{equation}\label{kinematicconditionintro}
\imagpart{\left(\mathsf{w}-cz\right)} = m_k \quad \text{on } \Gamma_k \qquad \textrm{for } k = 1, \ldots, M, 
\end{equation}
for some constant fluxes $m_1, \ldots, m_M \in \mathbb{R}$. Lastly, it is convenient to restate the dynamic condition~\eqref{time-dependent dynamic condition} in terms of the potential rather than the pressure. Integrating the steady Euler equations gives Bernoulli's law, which in the irrotational and incompressible setting takes the form
\[
	\frac{1}{2}|\partial_z \mathsf{w}-c|^2  + P = C \quad \text{ in } \mathscr{D},
\]
for a constant $C$. Evaluating this on each vortex boundary, the dynamic condition~\eqref{time-dependent dynamic condition} becomes 
\begin{equation}\label{bernoulliconditionintro}
	|\partial_z \mathsf{w}-c| = q_k \quad \text{on } \Gamma_k \qquad \textrm{for } k = 1, \ldots, M,
\end{equation}
where $q_1, \ldots, q_M \in \mathbb{R}$ are the so-called Bernoulli constants.
\end{subequations}

\subsection{Steady point vortices}
\label{point vortex section}

Our strategy will be to construct solutions of the steady hollow vortex problem~\eqref{PHVP} by desingularizing steady point vortex configurations. Let us begin by briefly recalling the Kirchhoff--Helmholtz model in the periodic setting. For more thorough discussion, see, for example, \cite{aref2007playground, StemlerBasu2Pwakes, 2Pflappingfoil, Stremler20142P, Stemler2P2C, ArefExoticvortexwakes, Stemler2003equilibria}.

The basic premise of the point vortex model is that the vorticity $\omega \colonequals \nabla^\perp \cdot \mathbf{u}$ is the sum of Dirac $\delta$ measures:
\begin{equation*}
    \omega = \sum_{n \in \mathbb{Z}} \sum_{k=1}^M \gamma_k \delta_{z_k+ nT}.
\end{equation*}
Here, $z_1, \ldots, z_M$ are the (finitely many) vortex centers lying in a fundamental periodic strip and  $\gamma_1, \ldots, \gamma_M$ are the corresponding vortex strengths. The complex velocity field would then take the form
\begin{equation}
\label{point vortex complex velocity field}
	u - i v = \sum_{n \in \mathbb{Z}} \sum_{k = 1}^M \frac{\gamma_k}{2\pi i} \frac{1}{ \placeholder - z_k - nT}.
\end{equation}
Since in the two-dimensional Euler equations, the vorticity is transported, we expect that each center $z_k$ evolves according to 
\[
 	\partial_t\overline{z_k} = \sum_{n \in \mathbb{Z}} \sum_{\substack{j=1 \\ j\neq k}}^{M} \frac{\gamma_j}{2\pi i }\frac{1}{z_k-z_j - n T} \qquad \text{for } k=1,\ldots , M.
\]
Note that we have eliminated the (singular) self-advection term.  The above series is not absolutely convergent, but formally rearranging it we arrive at the classical Helmholtz--Kirchhoff model for the $T$-periodic case:
\begin{equation}\label{periodic Helmholtz--Kirchhoff equation}
   \partial_t \overline{z_k}= \sum_{\substack{j=1 \\ j\neq k}}^M \frac{\gamma_j}{2iT}\cot{\left (\frac{\pi (z_k-z_j)}{T} \right)} \qquad \textrm{for } k = 1, \ldots, M.
\end{equation}
This calculation uses the well-known expansion formula for the cotangent:
\begin{equation}\label{cotexp}
\pi \cot(\pi z ) = \frac{1}{z} + \sum_{k=1}^{\infty}\frac{2z}{z^2-k^2},
\end{equation}
which can be found, for example, in \cite{aref2007playground, ConwayComplex}. Observe now that the system~\eqref{periodic Helmholtz--Kirchhoff equation} is finite-dimensional as it suffices to understand the positions of the $M$ vortices $z_1, \ldots, z_M$. We mention that a similar formal argument leads to the periodic complex potential
\begin{equation}\label{velocityforpointvortexproblem}
    w_0 = \sum_{k=1}^{M} \frac{\gamma_k}{2\pi i } \log{\sin{\left(\frac{\pi }{T} (\placeholder-z_k) \right)}}.
\end{equation}
The corresponding complex velocity field will then have simple poles at $z = z_k+T\mathbb{Z}$, which is consistent with~\eqref{point vortex complex velocity field}.

We say that a configuration of periodic point vortices with vortex centers $z_1,\ldots,z_M $ and respective circulations $\gamma_1,\ldots ,\gamma_M$ is \emph{steady} if $\partial_t \overline{z_k}  =c $ for all $k=1,\ldots ,M $ for some $c \in \mathbb{C}$; if $c \neq 0$, it is said to be \emph{traveling} and it is \emph{stationary} if $c=0$. When there are finitely many point vortices, it is possible to have steady configurations that rotate with constant angular velocity, however these do not exist in the periodic setting~\cite{Stemler2003equilibria}. 

We gather the parameters for the periodic steady point vortex problem into the vector 
\begin{equation*}
	\Lambda = (\zeta_1,\ldots ,\zeta_M, \gamma_1,\ldots ,\gamma_M,c,T) \in \mathbb{P}  \colonequals \mathbb{C}^M\times \mathbb{R}^M\times \mathbb{C}\times \mathbb{R}.
\end{equation*}
Here, we are writing $\zeta_1, \ldots, \zeta_M$ for the vortex centers (which are now fixed). In light of the time-dependent problem~\eqref{periodic Helmholtz--Kirchhoff equation}, the parameters must satisfy the algebraic system of equations
\be
\label{zero-set mathcal V}
	\mathcal{V}(\Lambda) = 0,
\ee
Here, we are writing $\zeta_1, \ldots, \zeta_M$ for the vortex centers (which are now fixed). In light of the time-dependent problem~\eqref{periodic Helmholtz--Kirchhoff equation}, the parameters must satisfy the algebraic system of equations
\be
\label{zero-set mathcal V}
	\mathcal{V}(\Lambda) = 0,
\ee
where the nonlinear map $\mathbb{P}\ni \Lambda\mapsto \mathcal{V}(\Lambda) = (\mathcal{V}_1,\ldots , \mathcal{V}_M)\in \mathbb{C}^M$ 
has components given by
\be
\label{definition V_k}
\mathcal{V}_k (\Lambda) \colonequals \sum_{j\neq k}\frac{\gamma_j}{2iT}\cot{\left(\frac{\pi (\zeta_k-\zeta_j)}{T} \right)} -c \qquad \textrm{for } k = 1, \ldots, M.
\ee
Many explicit solutions are known for this system, including idealizations of vortex streets. It is somewhat remarkable that despite the many formal manipulations needed to obtain~\eqref{periodic Helmholtz--Kirchhoff equation}, it is nonetheless a highly successful model of concentrated vorticity flow.

Finally, we say $\Lambda_0$ is a \emph{non-degenerate configuration} provided $\mathcal{V}(\Lambda_0) = 0$ and $D_{\Lambda}\mathcal{V}(\Lambda_0)$ has full rank. In this case, we may decompose the parameter space $\mathbb{P} = (\lambda,\lambda')$ so that $D_{\lambda} \mathcal{V}(\Lambda_0)$ is an isomorphism. We mention that this is slightly different from the definition for the finite vortex case~\cite{chen2023desingularization} because here, we are requiring that the configurations are periodic in the direction of the real axis while letting the wave speed $c$ be complex valued.  

\subsection{Conformal variable formulation of the hollow vortex problem}
\label{intro conformal section}

Recall that the unknowns for the steady hollow vortex problem~\eqref{PHVP} are the fluid domain $\fluidD$ and complex potential $\mathsf{w}$. The basic approach is to take a given non-degenerate periodic steady point vortex configuration and seek a periodic steady hollow vortex configuration that is nearby in the sense that the vortex boundaries are close to streamlines of the point vortex system. Of course, one of the main analytical difficulties is that this is a free boundary problem. With that in mind, we construct the fluid domain as the image under an a priori unknown conformal mapping of a fixed \emph{canonical domain}. The convention will be that the conformal domain is a subset of the complex $\zeta$-plane, and we will continue to use $z$ for the (complexified) physical variables.   From~\eqref{velocityforpointvortexproblem}, it follows that the streamlines for the point vortex system are asymptotically circular in the neighborhood of each vortex, and so a reasonable choice is to take
\be
\label{definition conformal domain}
	\confD_\rho = \mathcal{D}_{\rho} (\Lambda) \colonequals \mathbb{C}\setminus \left( \bigcup_{k=1}^{M} \overline{B_{|\rho|}(\zeta_k)}  + T\mathbb{Z} \right) \qquad \textrm{for } 0 < |\rho| \ll 1.
\ee
Here, $|\rho| > 0$ is the conformal radius, and we abuse notation slightly by writing $\confD_0 \colonequals \mathbb{C} \setminus \{ \zeta_1 + T\mathbb{Z}, \ldots, \zeta_M + T\mathbb{Z}\}$, which is the domain of definition of the starting point vortex velocity field. Naturally, we always assume $\rho$ is sufficiently small that the boundary of $\confD_\rho$ has precisely $M$ connected components. 

We may then introduce as a new unknown a injective and conformal mapping $f \colon \mathcal{D}_{\rho}\to \mathbb{C}$, and define the fluid domain in the $z$-plane by $\mathscr{D}\colonequals f(\mathcal{D}_{\rho})$; see Figure~\ref{pointtocirculartojordan}. Note that for $0 < |\rho| \ll 1$, we expect that $f$ will be near identity.  We denote by $w$ the complex potential in the conformal domain $\confD_\rho$. For $0 < |\rho| \ll 1$,  our ansatz will treat it as a perturbation of the complex potential $w_0$ for the point vortex system~\eqref{velocityforpointvortexproblem}. The entire problem can then be reformulated in the $\zeta$-plane for the unknowns
\be
\label{Holder regularity f w}
f \in C^{\ell+\alpha}(\overline{\confD_\rho}), \qquad w \in C^{\ell+\alpha}(\overline{\confD_\rho}),
\ee
where $\ell \geq 2$ and $\alpha \in (0,1)$ are arbitrary but fixed. Once $f$ and $w$ are determined, the physical domain is $\fluidD = f(\confD_\rho)$, and the complex velocity in the physical domain is given by $(w_\zeta/ f_\zeta) \circ  f^{-1}$.

\begin{figure}
	\includegraphics[width=0.9\linewidth]{}
	\caption{Left: fundamental periodic strip for a starting configuration of four point vortices. Center: fundamental strip for the conformal domain $\confD_\rho$ with $0 < |\rho|$. Right: one period of the corresponding physical domain $\fluidD = f(\confD_\rho)$. Each vortex boundary $\Gamma_k = f(\partial B_\rho(\zeta_k))$.}
	\label{pointtocirculartojordan}
\end{figure}

Finally, it will be necessary to work with a normalized version of the Bernoulli constants. Note that from~\eqref{velocityforpointvortexproblem} it follows that for a steady configuration
\be
	\label{point vortex relative velocity asymptotics}
	\partial_\zeta w_0(\zeta) = \frac{\gamma_k}{2\pi } \frac{1}{\zeta-\zeta_k} + c+ O(\zeta-\zeta_k) \qquad \textrm{as } \zeta \to \zeta_k.
\ee
 The relative velocity field for the hollow vortex system in the conformal variables is given by
\be
\label{definition U}
	U\colonequals \partial_z \left( w- c f \right) = \frac{\partial_\zeta w}{\partial_\zeta f} -c.
\ee
The dynamic condition~\eqref{bernoulliconditionintro} requires that $U$ has constant modulus $q_k$ on the $k$-th vortex boundary. In light of the asymptotics~\eqref{point vortex relative velocity asymptotics}, though, we expect $q_k = O(1/\rho)$ as $\rho \searrow 0$. We therefore define normalized Bernoulli constants $Q_1, \ldots, Q_M$ via
$$
\rho^2q_k^2 \equalscolon \frac{\gamma_k^2}{4\pi^2} + \rho Q_k.
$$
The precise scaling is justified by expanding the dynamic condition on $\partial B_\rho(\zeta_k)$ in powers of $\rho$; see Section~\ref{bernoulli section}.

\subsection{Statement of results} \label{statement section}

We now present the main contributions of this paper. For the time being, this will be done somewhat informally, as the actual analysis will be done on a transformed problem with fixed domain. 

Our first result states that generically, periodic point vortex configurations can be ``desingularized'' to create families of periodic hollow vortex configurations.

\begin{theorem}[Desingularization] \label{local desingularization theorem}
Fix any $\ell \geq 2$ and $\alpha \in (0,1)$. Let $\Lambda_0= (\lambda,\lambda')$ be a non-degenerate steady periodic point vortex configuration. Then there exists a curve $\cm_\loc$ of steady periodic hollow vortex configurations with the regularity~\eqref{Holder regularity f w}. The curve admits the real-analytic parameterization
\begin{equation*}
    \mathscr{C}_{\textup{loc}} = \left\{ \left( f({\rho}),w({\rho}),Q({\rho}), \lambda({\rho}), \rho \right) :  |\rho| \ll 1 \right\},
\end{equation*}
 where
 \[
 	f(0) = \id, \qquad w(0) = w_0, \qquad Q(0) = 0, \qquad \lambda(0) = \lambda_0.
 \]
\end{theorem}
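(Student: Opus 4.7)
The plan is to recast the steady hollow vortex problem on the $\rho$-dependent conformal domain $\confD_\rho$ as a real-analytic operator equation on a fixed reference domain and then apply the analytic implicit function theorem at $\rho = 0$. The natural unknowns are the conformal map $f$, the complex potential $w$, the normalized Bernoulli constants $Q = (Q_1, \ldots, Q_M)$, and the point vortex parameters $\lambda$; the complementary parameters $\lambda'$ are held fixed at $\lambda'_0$. The governing equations are the holomorphy and $T$-periodicity of $f$ and $w$, the kinematic condition~\eqref{kinematicconditionintro}, the normalized dynamic condition, and the circulation condition~\eqref{circulationconditionintro}.

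Because the geometry $\confD_\rho$ itself depends on $\rho$ and degenerates as $\rho \to 0$, the first necessity is to transfer everything to a fixed reference domain. I would introduce a smooth diffeomorphism sending a fixed multiply connected periodic strip onto $\confD_\rho$, obtained by composing the identity away from the vortices with a radial rescaling near each $\zeta_k$. Pulling $f$ and $w$ back through this map produces unknowns on a $\rho$-independent domain, and collecting the equations yields a map
\begin{equation*}
    F \maps \Xspace \by \R \to \Yspace, \qquad F(f, w, Q, \lambda, \rho) = 0,
\end{equation*}
between appropriate $C^{\ell+\alpha}$ Banach spaces of holomorphic functions with prescribed boundary values, periods, and circulations, with $F(\id, w_0, 0, \lambda_0, 0) = 0$.

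The heart of the argument is computing $D_{(f, w, Q, \lambda)} F$ at the trivial solution and verifying it is a bijection. The dynamic and kinematic conditions linearize to a Riemann--Hilbert problem on the reference periodic exterior domain with data prescribed on $M$ circles. Such a problem on a multiply connected domain carries an $M$-dimensional obstruction to solvability, and a direct computation based on the asymptotics~\eqref{point vortex relative velocity asymptotics} and the cotangent kernel~\eqref{cotexp} identifies this obstruction precisely with the point vortex derivative $D_\lambda \Vrho(\Lambda_0)$. Non-degeneracy ensures that varying $\lambda$ cancels it, so the full linearization is an isomorphism. The analytic implicit function theorem then yields a real-analytic branch $\rho \mapsto (f(\rho), w(\rho), Q(\rho), \lambda(\rho))$ for $|\rho| \ll 1$ passing through the trivial point, which is exactly $\cm_\loc$.

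The main obstacle is showing that $F$ extends analytically across $\rho = 0$ despite the collapse of the domain. The normalization $\rho^2 q_k^2 = \gamma_k^2/(4\pi^2) + \rho Q_k$ is the key: expanding $|U|^2 = q_k^2$ on $\partial B_\rho(\zeta_k)$ in powers of $\rho$, the singular $\rho^{-2}$ contribution from $\partial_\zeta w_0 \sim \gamma_k/(2\pi(\zeta-\zeta_k))$ is precisely cancelled by the left-hand normalization, leaving an equation with coefficients that are analytic functions of $\rho$ up to and including $\rho = 0$. One must verify the same analytic behavior for the pulled-back kinematic and circulation conditions and for the periodic Cauchy kernel arising from the cotangent expansion, and then implement a Riemann--Hilbert analysis on the fundamental multiply connected strip (in contrast to the purely exterior case of~\cite{chen2023desingularization}). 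This bookkeeping is the most delicate technical step; once it is in place, the remainder of the argument follows the standard IFT scheme.
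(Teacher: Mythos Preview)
Your overall strategy---reformulate on a fixed domain, verify $\rho=0$ is a regular point, show the linearization is invertible precisely because $D_\lambda\mathcal{V}(\Lambda_0)$ is, then apply the analytic implicit function theorem---is exactly what the paper does. The identification of the $M$-dimensional obstruction with $D_\lambda\mathcal{V}(\Lambda_0)$ is also the key point in the paper (Lemma~\ref{kernelrangelemma}).

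Where you diverge is in the mechanism for fixing the domain, and here there is a genuine gap. You propose pulling $f$ and $w$ back through a smooth diffeomorphism built from a radial rescaling near each $\zeta_k$. But such a map is not conformal, so the pullback of a holomorphic function is not holomorphic; you would have to carry the Cauchy--Riemann equations along as additional constraints in a non-standard form, and it is unclear how your ``Banach spaces of holomorphic functions'' are to be interpreted after this operation. More seriously, you acknowledge that analyticity of $F$ at $\rho=0$ is ``the most delicate technical step'' but do not supply an argument; with a generic diffeomorphism pullback there is no obvious reason the resulting operator is even continuous at $\rho=0$, since the domain is collapsing.

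The paper bypasses both issues with a concrete device you do not mention: it represents $f=\id+\rho^2\mathcal{Z}^\rho\mu$ and $w=w^0+\rho\mathcal{Z}^\rho\nu$ using periodic Cauchy-type layer potentials $\mathcal{Z}^\rho$, with real densities $\mu,\nu\in\mathring C^{\ell+\alpha}(\mathbb{T})^M$. This simultaneously (i) fixes the domain to $\mathbb{T}^M$, (ii) enforces holomorphy and $T$-periodicity automatically, and (iii) makes the $\rho\to 0$ limit transparent, since the explicit powers of $\rho$ and the expansion $\mathcal{Z}_k^\rho=\mathcal{C}^0+O(\rho)$ render $\mathscr{F}$ real-analytic through $\rho=0$ by inspection. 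The linearized operator then becomes a block of explicit Fourier multipliers, and invertibility reduces to a finite-dimensional row reduction landing on $D_\lambda\mathcal{V}(\Lambda_0)$. Your Riemann--Hilbert description is morally the same computation, but without the layer-potential ansatz you have not shown how to reach a point where it can actually be carried out.
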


This theorem generalizes the vortex desingularization result in \cite{chen2023desingularization} to the periodic setting. It is worth noting that the definition of non-degenerate is slightly different in the present paper. Theorem~\ref{local desingularization theorem} is proved using the analytic implicit function theorem. While it does not give explicit solutions in terms of special functions, one can (rigorously) expand to arbitrary order in $\rho$. There are many similar results for the existence of perturbative steady vortex patch solutions as desingularized point vortices; see, for example~\cite{wan1988desingularizations,cao2014regularization,hmidi2017pairs,hassainia2022multipole}.

Our second result extends the $\mathscr{C}_{\textup{loc}}$ to a maximal locally real-analytic and globally $C^0$ curve using techniques from global bifurcation theory. Specifically, as we follow the global curve to its extreme, we show that one of two types of singular behavior occurs. The first, which following~\cite{chen2023desingularization}, we call \emph{conformal degeneracy} is captured by the blowup of the quantity
\begin{equation}\label{Ncdefinition}
    N_{\textup{c}} (f) \colonequals \sup_{\partial \confD_{\rho}}|\partial_{\zeta}f| + \sup_{\zeta,\zeta'\in \partial \mathcal{D}_{\rho}} \frac{|\zeta-\zeta'|}{|f(\zeta)-f(\zeta')|}+ \frac{1}{\displaystyle\min_{j\neq k}\text{dist}(\Gamma_j,\Gamma_k)} +\frac{1}{\displaystyle\min_{j,k}\text{dist}(\Gamma_j+T,\Gamma_k)}.
\end{equation}
Observe that blowup of the second term indicates either loss of conformality and/or loss of injectivity of $f$ on the boundary. The latter of these happens, for example, when a vortex self-intersects. Blowup of the last two terms indicates that two of the vortex boundaries in the conformal domain are coming into contact. A new wrinkle in the periodic setting is that this may occur by a vortex from one periodic strip coming into contact with those in an adjacent strip. 

The second alternative, which we call \emph{velocity degeneracy}, is that the relative velocity on one of the vortex boundaries tends to $0$ or blows up in magnitude. This is described by the blowup of the quantity
\begin{equation}\label{Nvdefinition}
    N_{\textup{v}} (f,w)\colonequals \sup_{\partial \mathcal{D}_{\rho}} \left ( |U| + \frac{1}{|U|} \right).
\end{equation}
Recall that $U$ is defined by~\eqref{definition U}.

With that terminology in mind, the second theorem is then as follows.

\begin{figure}
    \centering
    \includegraphics[]{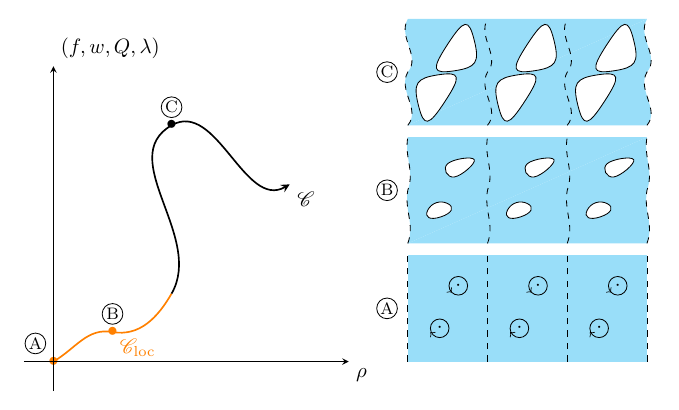}
    \caption{A schematic illustration of the global bifurcation curve $\cm$ given by Theorem~\ref{global desingularization theorem}. It bifurcates from an initial (non-degenerate) point vortex figuration and contains the local curve $\cm_\loc$ given by Theorem~\ref{local desingularization theorem}. While $\cm_\loc$ is parameterized by the conformal radius $\rho$, the global curve may have multiple turning points. It is also possible that secondary bifurcations occur, which are not pictured. As we follow $\cm$ to its extreme, either $|\lambda(s)|$ is unbounded or else conformal degeneracy or velocity degeneracy occurs.}
    \label{bifurcationdiagram}
\end{figure}

\begin{theorem}[Global continuation] \label{global desingularization theorem}
Let $\Lambda_0= (\lambda_0, \lambda_0')$ be a non-degenerate periodic point vortex configuration. Assume that at least one of the circulations is in $\lambda_0'$. Let $\cm_\loc$ be the curve of hollow vortex solutions bifurcating from $\Lambda_0$ given by Theorem~\ref{local desingularization theorem}. There exists a curve $\cm$ of hollow vortex solutions with $\cm_\loc \subset \cm$ and such that the following statements hold. 
\begin{enumerate}[label=\rm(\alph*)]
	\item The curve $\cm$ admits the global $C^0$ parameterization 
\begin{equation*}
    \cm = \{ (f(s),w(s),Q(s), \lambda(s), \rho(s)) :  s\in [0,\infty) \}
\end{equation*}
	with 
	\[
		f(0) = \id, \qquad w(0) = w_0 \qquad Q(0) = 0, \qquad \lambda(0) = \lambda_0, \qquad \rho(0) = 0.
	\]
	\item At each point, the curve $\cm$ has a local real-analytic reparameterization.  
	\item The quantity 
	\[
		N(s)\colonequals \Nconf(s) + \Nvel(s) + |\lambda(s)| \colonequals \Nconf(f(s)) + \Nvel(f(s), w(s)) + |\lambda(s)| 
	\]
	is finite for all $s \in (0,\infty)$, and
	\begin{equation}\label{blowupintro}
 		  \lim_{s\to \infty} \left( \Nconf(s) + \Nvel(s)+|\lambda(s)| \right) = \infty.
	\end{equation}
	\end{enumerate}
\end{theorem}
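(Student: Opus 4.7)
The plan is to apply the analytic global bifurcation theorem of Buffoni--Toland/Dancer to an operator formulation of the periodic steady hollow vortex problem, adapting the desingularization scheme of \cite{chen2023desingularization} to the present setting.

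First, I would recast \eqref{PHVP} as a zero problem $\mathcal{F}(f,w,Q,\lambda,\rho) = 0$ on an open subset $\mathcal{O}$ of a product Banach space. Since the conformal domain $\confD_\rho$ itself depends on $\rho$, everything must first be pulled back to a fixed reference domain via a $\rho$-dependent diffeomorphism, so that $(f,w)$ lie in the fixed Hölder spaces of \eqref{Holder regularity f w}; the remaining unknowns are $Q \in \mathbb{R}^M$, the block $\lambda$ (with $\lambda'$ frozen at $\lambda_0'$), and the scalar $\rho$. The open set $\mathcal{O}$ is characterized precisely by $\Nconf(f) < \infty$ and $\Nvel(f,w) < \infty$, which ensure that $f$ is conformal and bi-Lipschitz on $\partial \confD_\rho$, that the vortex boundaries remain disjoint across neighboring periodic cells, and that $U$ is bounded and bounded away from zero on $\partial \confD_\rho$.

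Next, I would verify the structural prerequisites of the theorem. The operator $\mathcal{F}$ is real-analytic on $\mathcal{O}$, inherited from the analyticity of the cotangent, sine, and Bernoulli nonlinearities and of the $\rho$-dependent diffeomorphism. The linearization is Fredholm of index~$1$: the boundary conditions reduce to an elliptic Riemann--Hilbert-type problem for $w$ in terms of $f$, which together with the finite-dimensional parameters $(Q,\lambda,\rho)$ has the correct index precisely because, by hypothesis, at least one circulation lies in $\lambda_0'$. By Theorem~\ref{local desingularization theorem}, $\cm_\loc \subset \mathcal{O}$ and its tangent spans the one-dimensional kernel of $D\mathcal{F}$ at each of its points. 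A properness argument using Schauder estimates for the Riemann--Hilbert problem, combined with the definitions \eqref{Ncdefinition}--\eqref{Nvdefinition}, shows that a sequence of solutions in $\mathcal{O}$ leaves every compact subset of $\mathcal{O}$ precisely when $\Nconf + \Nvel + |\lambda| \to \infty$.

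The analytic global bifurcation theorem then extends $\cm_\loc$ to a maximal $C^0$ curve $\cm$ with locally real-analytic reparameterizations, satisfying the dichotomy that either $\cm$ closes into a loop or the blowup \eqref{blowupintro} holds. The hardest step will be ruling out the loop alternative. Following \cite{chen2023desingularization}, I would use the local uniqueness from Theorem~\ref{local desingularization theorem}: a loop would force the curve to return to a neighborhood of the point-vortex base at $\rho = 0$, where every nearby solution must already lie on $\cm_\loc$. After carefully accounting for the $\rho \mapsto -\rho$ symmetry (which in the periodic setting is compatible with the geometry of $\confD_\rho$), this would force the global curve to retrace $\cm_\loc$, contradicting the strict monotonicity of the global reparameterization near $s = 0$. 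A new wrinkle compared to the non-periodic setting is the possibility that two vortex boundaries approach each other across adjacent periodic cells, which is exactly why $\Nconf$ includes the fourth term in \eqref{Ncdefinition}; this modifies the compactness bookkeeping but does not affect the overall structure of the argument.
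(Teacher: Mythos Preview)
Your overall strategy---analytic global bifurcation theory applied to a fixed-domain reformulation, plus Schauder/properness estimates to refine the generic alternatives---matches the paper's. However, there are two substantive discrepancies worth flagging.

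First, you have misidentified where the hypothesis ``at least one circulation lies in $\lambda_0'$'' enters. You claim it is needed so that the linearization has the correct Fredholm index. In the paper's formulation (densities $\mu,\nu$ on $\mathbb{T}^M$ rather than pulled-back $f,w$), the linearized operator $D_u\mathscr{F}(u_0,0)$ is Fredholm index~$0$ as soon as $\Lambda_0$ is non-degenerate (Lemma~\ref{kernelrangelemma}); no assumption on which parameters are frozen is needed there. The fixed circulation is used instead at a later, more delicate step: since $\gamma_1$ is constant along $\cm$, the Bernoulli speed on $\Gamma_1$ satisfies $q_1(s) = |\gamma_1|/|\Gamma_1(s)| \gtrsim |\gamma_1|/(\rho(s)\,\Nconf(s))$, while $q_1(s) \le \Nvel(s)$. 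Combined, this gives a uniform \emph{lower bound} on $\rho(s)$ in terms of $\Nconf$ and $\Nvel$. That lower bound is what drives the entire refinement of the alternatives.

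Second, and relatedly, your closed-loop exclusion is different from the paper's and not obviously complete. You argue via local uniqueness near $\rho=0$ and the $\rho\mapsto -\rho$ symmetry that a loop would force the curve to retrace $\cm_\loc$. The paper does not use this route. Instead, it argues by contradiction: assume $\Nconf+\Nvel+|\lambda|$ stays bounded along $\cm_+$; then by the previous paragraph $\rho(s)$ is bounded away from zero for $s\ge s_1$. A closed loop with period $P$ would force $\rho(nP)=\rho(0)=0$ for all $n$, contradicting that lower bound. This is simultaneously how the loop is ruled out \emph{and} how the abstract blowup quantity $N(s)$ (which includes $\|\mu\|_{C^{\ell+\alpha}}$, $\|\nu\|_{C^{\ell+\alpha}}$, $|Q|$, $|\rho|$, and the distance to $\partial\mathcal{O}$) is reduced to the physically meaningful $\Nconf+\Nvel+|\lambda|$: the fixed circulation yields the $\rho$ bound, the $\rho$ bound plus Lemma~\ref{uniformboundsformunu} controls the densities, and $|Q|$ is squeezed between explicit functions of $\rho$, $\gamma$, and $\Nvel$. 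Your sketch of the properness step is in the right spirit but does not isolate this mechanism, and without it your loop argument has no independent footing.
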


Theorem~\ref{global desingularization theorem} is obtained by using (analytic) global bifurcation theory. This is a very general tool which, when applied directly in this setting, allows for a far more expansive list of alternatives for the limiting behavior along the curve than what is stated in~\eqref{blowupintro}. One of the main strengths of our result is that the incipient degeneracy can be characterized rather economically in terms of the quantities $\Nconf$, $\Nvel$, and the parameters themselves, which all have physical significance. Similar vortex desingularization results have been obtained by a variety of analytical methods for vortex patches~\cite{garcia2020streets, GarciaHaziot2023, hassainia2020global}, as well as planar hollow vortices~\cite{chen2023desingularization}, and wave-borne hollow vortices~\cite{chen2025vortex}.

Next, we give three concrete applications of Theorems~\ref{local desingularization theorem} and \ref{global desingularization theorem}. First consider the (staggered) von K\'arm\'an point vortex configuration (with $M=2$ vortices per period):
$$
\Lambda_{\textup{vk}}^{0} \colonequals \left(\zeta_1=\tfrac{\pi}{4}+i,~\zeta_2=-\tfrac{\pi}{4}-i,~\gamma_1=1,~\gamma_2=-1,~ c= \tfrac{\tanh{(2)}}{2\pi},~ T= \pi \right).
$$
One can readily confirm that $\Lambda_{\textup{vk}}^0$ satisfies~\eqref{zero-set mathcal V}, and hence is a translating point vortex configuration.  

\begin{corollary}[von Kármán vortex street] \label{Von Karman vortex street initial Corollary}
There exists a family $\mathscr{C}_{\textup{vk}}$ of von K\'arm\'an vortex streets of hollow vortices admitting the global $C^0$ parametrization
$$
\mathscr{C}_{\textup{vk}} = \{ (f(s), w(s),Q(s), c(s) ,\rho(s)) : s \in [0,\infty) \}
$$
where $\zeta_1, \zeta_2, \gamma_1, \gamma_2$, and $T$ are fixed to their values in $\Lambda_{\textup{vk}}^0$.
\begin{enumerate}[label=\rm(\alph*)]
    \item The curve bifurcates from the point vortex configuration $\Lambda_{\textup{vk}}^0$ in that 
    \begin{equation}\label{VKintrocoordinates}
        f(0) = \id ,\quad w(0)=\frac{1}{2\pi i }\log{\left( \frac{\sin (\placeholder -\tfrac{\pi}{4}-i)}{\sin(\placeholder+\tfrac{\pi}{4}+i)} \right)},\quad Q(0)=0,\quad \rho(0)=0,\quad c(0)=\frac{\tanh(2)}{2\pi}.
    \end{equation}
    \item \label{vk street theorem blowup part}  For all $s>0$, we have $\rho(s)>0$. Moreover, in the limit 
    $$
    \lim_{s\to\infty} \left( \Nconf(s) + \Nvel(s) \right) = \infty.
    $$
    \item For each $s \geq 0$, $f(s)$ and $w(s)$ are odd in $\zeta$. Consequently,
    \be
    \label{von Karman Bernoulli constant symmetry}
    		-\mathscr{D}(s) =\mathscr{D}(s), \qquad Q_1(s) = Q_2(s) \qquad \textrm{for all } s \geq 0.
	\ee
\end{enumerate}
\end{corollary}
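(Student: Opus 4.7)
The plan is to reduce Corollary~\ref{Von Karman vortex street initial Corollary} to Theorems~\ref{local desingularization theorem} and \ref{global desingularization theorem} by restricting the analysis to an odd-symmetric class of configurations that automatically encodes the von K\'arm\'an structure, and then to read off the statements (a)--(c). First I would verify that $\Lambda_{\textup{vk}}^0$ is a non-degenerate steady periodic point vortex configuration. Using the identity $\cot(\tfrac{\pi}{2} + 2i) = -i\tanh(2)$ in the definition~\eqref{definition V_k} gives $\mathcal{V}_1(\Lambda_{\textup{vk}}^0) = \mathcal{V}_2(\Lambda_{\textup{vk}}^0) = \tfrac{\tanh(2)}{2\pi} - c$, which vanishes at $c = \tfrac{\tanh(2)}{2\pi}$, and the full-rank condition on $D_\Lambda \mathcal{V}$ follows by direct differentiation.

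Next I would identify and exploit the involution $\iota \colon \zeta \mapsto -\zeta$, which preserves $\Lambda_{\textup{vk}}^0$ when combined with the label swap $1 \leftrightarrow 2$ and the sign change of circulations. Using $\sin(-x) = -\sin(x)$ and $\gamma_1 + \gamma_2 = 0$, one sees directly that the point vortex potential in~\eqref{velocityforpointvortexproblem} satisfies $w_0(-\zeta) = -w_0(\zeta)$. I would then reformulate the whole hollow vortex problem in the invariant subspace consisting of odd conformal maps and potentials, $f(-\zeta) = -f(\zeta)$ and $w(-\zeta) = -w(\zeta)$. Within this subspace, the kinematic, dynamic, and circulation conditions on $\partial B_\rho(\zeta_1)$ and $\partial B_\rho(\zeta_2)$ are exchanged by $\iota$, so only one copy is independent, and the only free parameter that remains is $c \in \mathbb{R}$. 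The reduced linearization at the base point is $\partial_c \mathcal{V}_1 = -1$, which is invertible, and the hypothesis of Theorem~\ref{global desingularization theorem} that at least one circulation lies in $\lambda'$ is automatic because $\gamma_1, \gamma_2$ are held fixed. I would then invoke Theorem~\ref{local desingularization theorem} to produce the local curve with the initial data in~\eqref{VKintrocoordinates}, and Theorem~\ref{global desingularization theorem} to extend it to the global curve $\mathscr{C}_{\textup{vk}}$, which settles part~(a).

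For part~(c), because the odd class is preserved along the curve, $f(s)(-\zeta) = -f(s)(\zeta)$ holds for every $s$, which immediately gives $\mathscr{D}(s) = -\mathscr{D}(s)$. Differentiating the identities $f(-\zeta) = -f(\zeta)$ and $w(-\zeta) = -w(\zeta)$ shows that $f_\zeta$ and $w_\zeta$ are even, so $U$ defined by~\eqref{definition U} is even as well. Since $-\partial B_{\rho(s)}(\zeta_1) = \partial B_{\rho(s)}(\zeta_2)$, the modulus $|U|$ attains the same constant value on both vortex boundaries, forcing $q_1(s) = q_2(s)$ and hence $Q_1(s) = Q_2(s)$, which together with the conclusion for $\mathscr{D}(s)$ yields~\eqref{von Karman Bernoulli constant symmetry}.

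Part~(b) is the main obstacle. Two things must be shown: that $\rho(s) > 0$ for all $s > 0$, and that the term $|\lambda(s)| = |c(s)|$ cannot drive the blowup~\eqref{blowupintro} on its own. For the first, I would argue by contradiction: if $\rho(s_0) = 0$ for some $s_0 > 0$, then $(f(s_0), w(s_0)) = (\id, w_0)$ recovers the original point vortex configuration, and by the uniqueness of $\mathscr{C}_{\textup{loc}}$ in Theorem~\ref{local desingularization theorem} together with the local real-analytic reparameterization asserted in Theorem~\ref{global desingularization theorem}, the global curve must locally agree with the $\rho$-parameterized branch, contradicting the assumption that $\rho$ vanishes at an interior point. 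To remove $|c(s)|$ from the blowup alternative, I would bound $w_\zeta/f_\zeta$ on $\partial\confD_{\rho(s)}$ uniformly in terms of $\Nconf(s)$ and the fixed data $(\zeta_1, \gamma_1, T)$: when $\Nconf(s)$ is bounded, $f_\zeta$ is bounded above and below, while $w_\zeta$ carries the fixed singular structure inherited from~\eqref{point vortex relative velocity asymptotics} and is therefore uniformly controlled. The triangle inequality applied to $U = w_\zeta/f_\zeta - c$ then gives $|U| \geq |c(s)| - C$ on the vortex boundaries, so $|c(s)| \to \infty$ forces $\Nvel(s) \to \infty$, allowing $|\lambda(s)|$ to be absorbed into $\Nvel(s)$ in~\eqref{blowupintro}. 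The hard part will be making this last bound on $w_\zeta/f_\zeta$ quantitatively precise using only quantities controlled by $\Nconf$ and $\lambda'$.
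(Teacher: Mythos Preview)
Your overall architecture matches the paper's: restrict to the odd-symmetric class, reduce to a single vortex boundary, verify invertibility of the reduced linearization at $\Lambda_{\textup{vk}}^0$, and then apply the abstract local and global theorems. Parts~(a) and~(c) are handled essentially as in the paper (the paper implements the odd reduction at the level of the densities via $\mu_2=-\mu_1(-\,\cdot)$, $\nu_2=-\nu_1(-\,\cdot)$, $Q_2=Q_1$, which is the $(\mu,\nu)$-translation of your statement about odd $f,w$).

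The gap is in your part~(b) argument for eliminating $|c(s)|$ from the blowup alternative. You propose to bound $w_\zeta/f_\zeta$ on $\partial\confD_{\rho(s)}$ using only $\Nconf(s)$ and the fixed data, claiming that ``$w_\zeta$ carries the fixed singular structure inherited from~\eqref{point vortex relative velocity asymptotics} and is therefore uniformly controlled.'' This is not true: by the ansatz~\eqref{ansatzforw}, $\partial_\zeta w=\partial_\zeta w^0+\mathcal{Z}^\rho[\nu']$, and the second term depends on the unknown density $\nu$, which is \emph{not} controlled by $\Nconf$ alone. The a~priori bounds on $\nu$ (Lemma~\ref{uniformboundsformunu}) already require control of $|\lambda|=|c|$, so your argument is circular. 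The relation $|U|=q_k$ on the boundary only gives $|c|\le \Nvel+|w_\zeta/f_\zeta|$, and you have no independent handle on the second term.

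The paper breaks this circularity with a separate wave-speed estimate (Proposition~\ref{wavespeedProp}): it writes $c$ via a residue computation as a contour integral of $U\cot\big(\tfrac{\pi}{T}(\zeta-\zeta_1)\big)$ over $\partial\confD_\rho$ plus boundary terms at infinity, which bounds $|c|$ directly in terms of $\Nvel$ and $\Nconf$ without ever invoking $w_\zeta$ or $\nu$. This is the missing ingredient. Your $\rho(s)>0$ argument is also weaker than the paper's; the paper obtains a quantitative lower bound $\rho(s)\gtrsim |\gamma_1|/(\Nconf(s)\Nvel(s))$ from $q_1=|\gamma_1|/|\Gamma_1|$ and $|\Gamma_1|\eqsim \rho\,\Nconf$, which simultaneously rules out the closed-loop alternative and feeds into the wave-speed bound.
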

Note that by exploiting some symmetries in the configuration, all the parameters but the speed $c$ are fixed along the solution family. We are further able to provide an a priori bound on $c$ in terms of $\Nconf$ and $\Nvel$, leading to the more refined blowup statement in part~\ref{vk street theorem blowup part}. Families of von Kármán hollow vortex streets were previously computed by Crowdy and Green~\cite{crowdy2011analytical} using a representation via Schottky--Klein prime functions. In their numerics, the limiting vortex boundaries appear to have corners, which would imply blowup of $\Nconf$. Their family is normalized differently --- $c$ is fixed --- but it would be very interesting to see if a similar behavior occurs along $\cm_{\textrm{vk}}$.

Traizet~\cite{traizet2015hollow} rigorously constructed a one-parameter family of small, near-circular von Kármán hollow vortex streets. His approach relied on a remarkable correspondence between the steady hollow vortex system~\eqref{PHVP} and a minimal surface problem. Specifically, let $\widetilde{\fluidD} \subset \mathbb{R}^2$ be an unbounded domain that is periodic in the horizontal direction and has a nonempty boundary with finitely many components per period.  Consider classical solutions $v = v(x,y) \colon \widetilde{\fluidD} \to \mathbb{R}$ of the quasilinear elliptic problem
\be 
\label{minimal surface equation}
\left\{
	\begin{aligned}
		(1+ v_y^2 ) v_{xx} + ( 1+ v_x^2) v_{yy} - 2 v_x v_y v_{xy} & = 0 & \qquad & \textrm{in } \widetilde{\fluidD} \\
		v \textrm{ locally constant} & & \qquad  & \textrm{on } \partial \widetilde{\fluidD} \\
		| \nabla v | & \to \infty & \qquad & \textrm{on } \partial \widetilde{\fluidD}.
	\end{aligned}
\right.
\ee
The graph of $v$ is then a minimal surface. The blowup of $|\nabla v|$ as one approaches the boundary geometrically means that the normal to the surface (Gauss map) is purely horizontal there. Under the assumption that the Bernoulli constants are all the same and that $|U|$ is maximized on $\partial\fluidD$, Traizet proves that each solution of the steady hollow vortex problem~\eqref{PHVP} gives rise to a solution of the minimal surface problem~\eqref{minimal surface equation}, with $\widetilde{\fluidD}$ being the image of $\fluidD$ under an explicit mapping determined from $w$. Thanks to~\eqref{von Karman Bernoulli constant symmetry} and a maximum modulus principle argument, all of the solutions along $\cm_{\textrm{vk}}$ in fact satisfy both of those hypotheses, and thus there is a corresponding global family $\widetilde{\cm}_{\mathrm{vk}}$ of minimal surface solutions to \eqref{minimal surface equation}.

A typical example of a singly periodic vortex (unstaggered) array of point vortices (with $M=2$ vortices per period) is 
\begin{equation*}
    \Lambda_{\mathrm{ar}}^0 \colonequals \left( \zeta_1=i, \, \zeta_2=-i, \, \gamma_1=1,\, \gamma_2=-1,\, c=\tfrac{\coth(2)}{2\pi}, \, T = \pi \right).
\end{equation*}
Applying a version of Theorems~\ref{local desingularization theorem} and \ref{global desingularization theorem} furnishes a corresponding global family of periodic hollow vortex arrays $\mathscr{C}_{\textup{ar}}$, that bifurcates from the point vortex configuration
 \begin{equation}\label{ARintrocoordinates}
        f(0) = \id, \quad w(0) = \frac{1}{2\pi i } \log \Bigg( \frac{\sin (\cdot-i)}{{\sin(\cdot + i)}} \Bigg), \quad Q(0) =0, \quad c(0) = \frac{\coth (2)}{2\pi}.
\end{equation}
For a precise statement, and the symmetries exhibited by  $\mathscr{C}_{\textup{ar}}$ we refer to the statement of Corollary \ref{ArrayCorollaryfinal}. 

We also construct the first large families of hollow vortices $\mathscr{C}_{2\textup{P}}$ that bifurcate from steady (unstaggered) $2$P (two pair) point vortex configurations. A typical example is
\begin{align*}
      \Lambda_{\textup{2P}}^0 & \colonequals \Big( \zeta_1=i, \, \zeta_2=2i + \tfrac{\pi}{2},\, \zeta_3=-i,\,\zeta_4 = -2i + \tfrac{\pi}{2},\, \gamma_1 = \gamma_1^0, \\
      &\qquad\gamma_2=\gamma_2^0 ,\,\gamma_3 = -\gamma_1^0,\, \gamma_4=-\gamma_2^0,\, c=1,\,T=\pi\Big),
 \end{align*}
 where
 \begin{equation}\label{CUT1}
 	\gamma_1^0 \colonequals \tfrac{2\pi\left(-\coth{4}+\tanh{1}+\tanh{3}\right)}{-(\tanh{3})^2+(\tanh{1})^2+\coth{2}\coth{4}},\qquad \gamma_2^0 \colonequals \tfrac{2\pi (-\tanh{3}+\tanh{1}+\coth{2})}{-(\tanh{3})^2+(\tanh{1})^2+\coth{2}\coth{4}}.
\end{equation}
For a precise statement, see Corollary~\ref{2Pcorolaryfinal}.

This type of point vortex motion is observed, for example in the wake of two stationary cylinders~\cite{2Pflappingfoil, fayed2011visualization}, and they have been mathematically treated in~\cite{BasuStemler2015, StemlerBasu2Pwakes, Stremler20142P, Stemler2P2C, ArefExoticvortexwakes}. The family of $2$P hollow vortices that we construct bifurcates from the point vortex configuration 
 \begin{equation}\label{STintrocoordinates}
        \begin{aligned}
         f(0) &= \id, \quad Q(0) =0, \quad c(0) = 1, \quad \gamma_2(0) = \gamma_2^0 ,\quad \gamma_4(0) =-\gamma_2^0 \\
         w(0)&=\frac{\gamma_1^0}{2\pi i } \log \Bigg( \frac{\sin (\placeholder-i)}{{\sin(\placeholder + i)}} \Bigg) +\frac{\gamma_2^0}{2\pi i } \log \Bigg( \frac{\cos (\placeholder-2i)}{{\cos(\placeholder + 2i)}} \Bigg) .
\end{aligned}
\end{equation}

These three examples are all non-degenerate in the sense that Jacobian of $\mathcal{V}$ has full rank at $\Lambda_{\textup{vk}}^0$, $\Lambda_{\textup{ar}}^0$, and $\Lambda_{\textup{2P}}^0$, so one could simply apply Theorem~\ref{global desingularization theorem} to obtain large families of hollow vortex solutions. However, by enforcing additional symmetries, we are able to fix more of the parameters. For example, this ensures that even the non-perturbative hollow vortices along $\mathscr{C}_{2\textup{P}}$ are indeed $2$P as the vortex centers are unchanging and each pair consists of two counter-rotating vortices.  

\subsection{Plan of the paper}
\label{outline section}
Let us now outline the main steps of the argument. The overall strategy is much the same as in the non-periodic case~\cite{chen2023desingularization}, but there are nontrivial modifications that must be made in the periodic setting. Section~\ref{Layer-potential representations} introduces the layer-potential operators that will be our primary means of fixing the domain. These are Cauchy-type integral operators with a kernel of the form $\cot{(\placeholder)}$. We prove a number of important properties of these mappings that lay the foundation for the analysis in the remainder of the paper. A recurring theme is that in the periodic setting, the behavior at infinity is less constrained. Indeed, unlike the finite vortex configurations considered in~\cite{chen2023desingularization}, here the relative velocity need not vanish at infinity, and it can have distinct (constant) limits as $\imagpart{\zeta} \to +\infty$ and as $\imagpart{\zeta} \to -\infty$. 

In Section~\ref{formulation section}, we reformulate the hollow vortex problem in a setting amenable to bifurcation theory. Recall that the unknowns are the conformal mapping $f$ and the complex velocity potential $w$, which can be uniquely constructed from their values on the boundary of the conformal domain. The kinematic condition~\eqref{kinematicconditionintro} and dynamic condition~\eqref{bernoulliconditionintro} can likewise be posed as nonlocal equations for the boundary traces. But $\partial\confD_\rho$ is itself varying with $\rho$, so we instead work with functions defined on the domain $\mathbb{T}^M$, where $\mathbb{T}$ is the unit circle in the complex plane. Specifically, we represent $f$ and $w$ using the above mentioned layer-potential operators in terms of real-valued H\"older continuous densities $\mu,\nu \in C^{\ell + \alpha}(\mathbb{T})^M$. The steady hollow vortex problem~\eqref{PHVP} can then be stated as the nonlinear operator equation
$$
\mathscr{F}(\mu, \nu ,Q ,\lambda; \rho) =0. 
$$
for a (real-analytic) mapping $\F$ between certain Banach spaces. At $\rho = 0$, this equation collapses analytically to the steady point vortex problem~\eqref{zero-set mathcal V}.

 In Section~\ref{local section}, we therefore make an implicit function theorem argument to construct families of small, near-circular hollow vortices. The curve of solutions bifurcates from the configuration (0,0,0,$\lambda_0$,0), where $\lambda_0$ is the initial value of $\lambda$ in the steady point vortex configuration $\Lambda_0$. Taking advantage of its diagonal structure, we prove that the linearized operator $D_{(\mu,\nu,Q,\lambda )} \mathscr{F} (0,0,0,\lambda_0; 0)$ is an isomorphism if and only if the (finite-dimensional) Jacobian of the point vortex problem  $D_{\lambda}\mathcal{V}(\Lambda_0)$ is an isomorphism. Thus, in the non-degenerate setting, we obtain a (real-analytic) curve $\mathscr{C}_{\text{loc}}$ parameterized by $\rho$, whose elements correspond to periodic hollow vortex configurations with conformal radius $\rho$ as illustrated in Figure \ref{bifurcationdiagram}. 

Section~\ref{global section} is devoted to extending the local solution curve $\cm_\loc$ to a global branch of solutions $\cm$.  A preliminary use of the global implicit function theorem guarantees the existence of a global curve, but allows for a much larger set of alternatives for the limiting behavior along it: either there is (i) blowup in norm of the densities, (normalized) Bernoulli constants, or parameters, or else (ii) there is ``loss of compactness or Fredholmness'', meaning roughly that the zero-set of $\F$ fails to be locally compact or that $\F$ is not semi-Fredholm along it. The main task is then to pare this down to just~\eqref{blowupintro}. Here, the periodicity complicates a number of the arguments.

Finally, in Section~\ref{applications}, we apply these theorems to the von Kármán vortex street, vortex array, and $2$P examples mentioned above. In part, this relies on a new bound on the wave speed for singly periodic hollow vortex configurations.

\subsection*{Notation} \label{notation}
Here we lay out some notational conventions for the remainder of the paper.  Let $\mathbb{T}$ be the unit circle in the complex plane and $\mathbb{F} = \mathbb{R}$ or $\mathbb{C}$.  For $\ell \geq 0$ and $\alpha \in (0,1)$, we denote by $C^{\ell+\alpha}(\mathbb{T},\mathbb{F})$ the space of  H\"older continuous functions of order $\ell$, exponent $\alpha$, and having domain $\mathbb{T}$ that take values in $\mathbb{F}$; if $\mathbb{F}=\mathbb{R}$ we just write $C^{\ell+\alpha}(\mathbb{T})$. Every $\varphi \in C^{\ell+\alpha}(\mathbb{T})$ admits a unique power series representation 
\[
	\varphi(\tau) = \sum_{m \in \mathbb{Z}} \widehat \varphi_m \tau^m, \qquad \textrm{where} \quad \widehat\varphi_m \colonequals  \frac{1}{2\pi} \int_{\mathbb{T}} \varphi(\tau) \overline{\tau^m} \, d\theta = \frac{1}{2\pi i} \int_{\mathbb{T}} \varphi(\tau) \overline{\tau^{m+1}} \, d\tau.
\] 
Here and elsewhere, we write $\tau = e^{i\theta}$ when parameterizing $\mathbb{T}$ by arc length.  Note that because these are real-valued functions, the coefficients must obey $\widehat\varphi_{-m} = \overline{\widehat\varphi_m}$.  For $m \geq 0$, let $\proj_m$ denote the projection
\begin{equation}
  \label{definition projection operator}
  \proj_m \colon C^{\ell+\alpha}(\mathbb{T}) \to C^{\ell+\alpha}(\mathbb{T}), \quad 
  (P_m\varphi)(\tau) \colonequals  
    \left\{ \begin{aligned}
      \widehat\varphi_m \tau^m + \widehat\varphi_{-m} \tau^{-m} & \qquad  \textrm{if }m \neq 0 \\
       \widehat\varphi_0 & \qquad  \textrm{if } m = 0.
    \end{aligned}
    \right.
\end{equation}
and set
\[ 
	\proj_{\leq m} \colonequals  \proj_0 + \cdots + \proj_m, \qquad \proj_{> m} \colonequals  1-\proj_{\leq m}.
\]
We will often work with the spaces
\[
	\mathring{C}^{\ell+\alpha}(\mathbb{T},{\mathbb{F}}) \colonequals  \proj_{> 0} C^{\ell+\alpha}(\mathbb{T},{\mathbb{F}}),\qquad 
\]
which corresponds to elements of $C^{\ell+\alpha}(\mathbb{T})$ normalized to have mean $0$.

Finally, throughout the paper, we will use the Wirtinger derivative operators
\[
	\partial_z \colonequals  \frac{1}{2} \left( \partial_x - i \partial_y \right), \qquad
	\partial_{\overline z} \colonequals  \frac{1}{2} \left( \partial_x + i \partial_y \right).
\]
In particular, a function $f = f(z,\bar z)$ is holomorphic precisely when $\partial_{\bar z} f = 0$ and antiholomorphic provided $\partial_z f = 0$.  When there is no risk of confusion, these may also be denoted using subscripts, thus $f_z = \partial_z f$ and so on.  Primes will be reserved for denoting Wirtinger derivatives of functions with domain $\mathbb{T}$.

\section{Layer-potential representations}
\label{Layer-potential representations}

Recall that our strategy is to reformulate the hollow vortex problem in terms of the traces of $f$ and $w$ on the boundary of the conformal domain using periodic layer potential operators. Of course, for the conformal description to be well-defined, it is necessary that each of the components of $\partial\confD_{\rho}$ is disjoint. It will be convenient to have a family of open sets in parameter space that enforce this requirement quantitatively. With that in mind, for each $\delta > 0$, let 
\begin{equation}\label{defofmathcalU}
\mathcal{U}_{\delta}\colonequals \{ (\rho, \Lambda) \in \mathbb{R}\times \mathbb{P} : \min_{j \neq k } |\zeta_j - \zeta_k| > 2|\rho | + \delta,~  \min_{j,k} \{ |\zeta_k-\zeta_j- T| \}>2|\rho| +\delta  \},
\end{equation}
and set $\mathcal{U} \colonequals \bigcup_{\delta > 0} \mathcal{U}_{\delta }$.

Let us now introduce the layer potential operators, which are an adaptation of those in~\cite{chen2023desingularization} to the singly-periodic setting.  For $(\rho, \Lambda) \in \mathcal{U}$ and real-valued densities $ \mu = (\mu_1, \ldots, \mu_M) \in \mathring{C}^{\ell+\alpha}(\mathbb{T})^M$, we define the layer-potential operator
\begin{equation}
\label{definition for Zrho}
 \mathcal{Z}^{\rho}(\Lambda)[\mu] (\zeta)\colonequals \sum_{k=1}^{M}\frac{1}{2 T i}\int_{\mathbb{T}}\mu_k(\sigma)\cot\Big(\frac{\pi}{T}(\rho \sigma +\zeta_k^0-\zeta) \Big)\rho \,  d\sigma.
\end{equation}
It is clear that the right-hand side of \eqref{definition for Zrho} constitutes a single-valued holomorphic in $\mathcal{D}_\rho$ that is periodic with period $T$.  Moreover, for fixed $(\rho,\fullparam) \in \mathcal{U}$ with $|\rho| > 0$, we have by Privalov's theorem that 
$$
\mathcal{Z}^\rho(\fullparam) \quad \textrm{is bounded} \quad C^{\ell+\alpha}(\mathbb{T})^M \to C^{\ell+\alpha}(\overline{\confD_\rho},\mathbb C),
$$
and this bound is uniform on compact subsets of $\mathcal{U}$. To keep the notation manageable, we will often suppress the dependence on $\Lambda$ when there is no risk of confusion, and write $\mathcal{Z}^\rho \mu$ rather than $\mathcal{Z}^\rho[\mu]$.

\begin{remark}\label{symmetryofZwrtmurho}
A simple change of variables reveals that: 
$\mathcal{Z}^{\rho} \mu = \mathcal{Z}^{-\rho}[\mu(-\placeholder)]$. 
\end{remark}

For each $k = 1, \ldots, M$, we likewise define the trace operator
\begin{equation}
\begin{aligned}
\label{definition for Zkrho}
    \mathcal{Z}_k^{\rho}[\mu](\tau) :=\mathcal{Z}^{\rho}(\zeta_k+\rho \tau) &=\frac{1}{2iT}\int_{\mathbb{T}}(\mu_k(\sigma)-\mu_k(\tau))\cot{\left (\frac{\pi }{T}\rho(\sigma-\tau) \right)} \rho \, d\sigma\\ &\qquad+
    \sum_{j\neq k}\frac{1}{2iT}\int_{\mathbb{T}}\mu_j(\sigma)\cot{\left(\frac{\pi }{T}(\rho(\sigma-\tau)+\zeta_j-\zeta_k) \right)} \rho \,  d\sigma \\
    &\equalscolon \mathcal{C}^{\rho}\mu_k + \sum_{j\neq k}\frac{1}{2iT}\int_{\mathbb{T}}\mu_j(\sigma)\cot{\left(\frac{\pi }{T}(\rho(\sigma-\tau)+\zeta_j-\zeta_k) \right)} \rho \, d\sigma.  
\end{aligned}
\end{equation}
where the first line follows from the Sokhotski--Plemelj formula. By standard potential theory, for fixed $\rho$ and $\Lambda$, 
\begin{equation}\label{boundednessofZkrho}
    \mathcal{Z}^\rho_k(\Lambda) \quad \textrm{is bounded} \quad C^{\ell+\alpha}(\mathbb{T})^M \to C^{\ell+\alpha}(\mathbb{T},\mathbb C).
\end{equation}
Also assuming that $\delta<|\realpart(\zeta_j-\zeta_k)| < T$ for all $j,k=1,\ldots M$ we have
\begin{equation}\label{local expression for Zkrhoquick}
\mathcal{Z}_{k}^{\rho} [\mu] (\tau) = \mathcal{C}^{\rho}[\mu_k](\tau) + O(\rho) \quad \text{ in } C^{\ell+\alpha}(\mathbb{T}).
\end{equation} 
We defer the proof of this fact to Proposition~\ref{firstZ_krhoassymprotic}.

Consider next the behavior of $\mathcal{Z}^{\rho}$ as $\rho \to 0$. For $0 < |\rho| \ll 1$ and $j \neq k$, it is easy to see that
\begin{align*}
	& \left |\frac{1}{2iT}\int_{\mathbb{T}}\mu_j(\sigma)\cot{\left(\frac{\pi }{T}(\rho(\sigma-\tau)+\zeta_j-\zeta_k) \right)} \rho \,  d\sigma \right| \\
	& \qquad \lesssim  \| {\mu}\|_{L^{\infty}(\mathbb{T})} \sup_{\kappa \in \mathbb{T}-\mathbb{T}}\left|{\frac{1}{T}}\cot{\left({\frac{\pi}{T}}(\zeta_k-\zeta_j+{\rho}{\kappa})\right)}\right||\rho| \longrightarrow 0
\end{align*}
as $\rho \to 0$.  On the other hand, using the Laurent series for the cotangent
\begin{equation}\label{cotangentseries}
\cot{z} = \frac{1}{z} + \sum_{n\geq 1} a_nz^{n}, \qquad \textrm{for } |z| < \pi,
\end{equation}
it is straightforward to see that as $\rho\to 0$
$$
\mathcal{C}^{\rho}  \longrightarrow \mathcal{C}^0 \quad \text{in }  \mathcal{L} (C^{\ell+\alpha}(\mathbb{T})^M, C^{\ell+\alpha}(\mathbb{T},\mathbb{C}) ) \quad \text{ where }\quad \mathcal{C}^0 \varphi \colonequals \frac{1}{2Ti} \int_{\mathbb{T}} \frac{\varphi(\sigma)-\varphi(\placeholder)}{\sigma-\placeholder}  \, d\sigma,
$$
for all $\varphi \in C^{\ell+\alpha}(\mathbb{T}; \mathbb{C})$.  Unsurprisingly, $\mathcal{C}^0$ is precisely the Cauchy-type integral operator that arose as the singular part of the layer potential in the non-periodic case~\cite{chen2023desingularization}. 
Combining that with \eqref{local expression for Zkrhoquick} we obtain the rather useful fact 
\begin{equation}\label{local expression for Zkrho}
\mathcal{Z}_k^{\rho}[\mu] =\mathcal{C}^{0}[\mu_k] + O(\rho) \quad \text{ in } C^{\ell+\alpha}(\mathbb{T}).
\end{equation}

\begin{lemma}\label{rangeClemma}
For each $m \in \mathbb{Z}$, the following formula holds:
$$
\mathcal{C}^{\rho}\tau^{m} = \left\{ 
	\begin{aligned}
	0 & \qquad m \geq 0 \\
	\rho^{|m|}\left(\frac{\pi}{T}\right)^{|m|} \frac{(-1)^{|m|}}{(|m|-1)!}\cot^{(|m|-1)}{\left(\frac{\pi \rho}{T}\tau \right)} & \qquad m < 0.
	\end{aligned} 
	\right.
$$
\end{lemma}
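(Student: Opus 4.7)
The plan is to recognize $\mathcal{C}^\rho$ as the boundary trace of the single-vortex layer potential and evaluate that potential on the exterior domain $\confD_\rho$ via residue calculus in the parameter variable $\sigma$. Concretely, I would take $\mu_j \equiv 0$ for $j \neq k$ and $\mu_k(\sigma) = \sigma^m$, so that from \eqref{definition for Zkrho} the diagonal piece satisfies $\mathcal{C}^\rho \tau^m = \mathcal{Z}^\rho[\mu](\zeta_k + \rho\tau)$. Viewing \eqref{definition for Zrho} as a contour integral in $\sigma$ over the unit circle traversed counter-clockwise,
$$
\mathcal{Z}^\rho[\mu](\zeta) = \frac{1}{2Ti}\oint_{|\sigma|=1} \sigma^m \cot\Bigl(\tfrac{\pi(\rho\sigma+\zeta_k-\zeta)}{T}\Bigr)\,\rho\,d\sigma,
$$
the key geometric observation is that for $\zeta \in \confD_\rho$ the poles of $\sigma \mapsto \cot(\pi(\rho\sigma+\zeta_k-\zeta)/T)$ all lie outside $|\sigma|\le 1$. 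Indeed, they occur precisely at $\sigma = (\zeta-\zeta_k+nT)/\rho$ for $n \in \mathbb{Z}$, and the defining inequalities of $\confD_\rho$ in \eqref{definition conformal domain} force $|\sigma| > 1$. Hence inside the unit disk the only singularity of the integrand is the one provided by $\sigma^m$ itself when $m$ is negative.

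Given this, the case $m \geq 0$ is immediate: $\sigma^m$ is entire, the integrand is holomorphic on $\{|\sigma|\le 1\}$, and Cauchy's theorem gives $\mathcal{Z}^\rho[\mu] \equiv 0$ on $\confD_\rho$, so $\mathcal{C}^\rho\tau^m = 0$ upon taking the boundary trace. For $m < 0$, set $n = |m|$; the integrand has an $n$-th order pole at $\sigma = 0$, and the residue theorem combined with the Leibniz rule yields
$$
\mathcal{Z}^\rho[\mu](\zeta) = \frac{\pi}{T}\cdot\frac{\rho}{(n-1)!}\left.\frac{d^{n-1}}{d\sigma^{n-1}}\cot\Bigl(\tfrac{\pi(\rho\sigma+\zeta_k-\zeta)}{T}\Bigr)\right|_{\sigma=0} = \frac{\rho^n(\pi/T)^n}{(n-1)!}\,\cot^{(n-1)}\!\Bigl(\tfrac{\pi(\zeta_k-\zeta)}{T}\Bigr),
$$
for every $\zeta \in \confD_\rho$. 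Specializing to $\zeta = \zeta_k + \rho\tau$ produces the argument $-\pi\rho\tau/T$, and since $\cot$ is an odd function, a short induction on $k$ establishes $\cot^{(k)}(-x) = (-1)^{k+1}\cot^{(k)}(x)$; applied with $k = n-1$ this contributes the sign $(-1)^n$ and gives the claimed identity with $n = |m|$.

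The only genuinely content-bearing step is verifying that the cotangent kernel contributes no interior poles in the $\sigma$-plane, which is a quick consequence of the defining radius inequalities in $\confD_\rho$ (and is the reason the same formula would fail if the disks overlapped). Everything else — the identification $\mathcal{C}^\rho = \mathcal{Z}_k^\rho$ for a single-vortex density via \eqref{definition for Zkrho}, the continuity of the residue formula up to $\partial\confD_\rho$ from the exterior, and the sign bookkeeping for $\cot^{(n-1)}(-x)$ — is routine, so I expect no real obstacles beyond this geometric check.
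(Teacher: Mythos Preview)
Your argument is correct and lands on the same residue-theoretic core as the paper's one-line proof. The only organisational difference is that the paper works directly with the subtracted kernel defining $\mathcal{C}^\rho$ (hence its reference to integration by parts and the Laurent expansion \eqref{cotangentseries} to dispose of the removable singularity at $\sigma=\tau$), whereas you bypass that entirely by computing the \emph{unsubtracted} integral $\mathcal{Z}^\rho[\mu](\zeta)$ on the exterior --- where the kernel is genuinely holomorphic in $\sigma$ on $\overline{B_1}$ --- and then invoking the Sokhotski--Plemelj identification \eqref{definition for Zkrho} to pass to the boundary. Your route is slightly cleaner in that it never needs the series \eqref{cotangentseries} or any principal-value manipulation; the price is the (harmless) extra step of checking that the closed-form you obtain for $\mathcal{Z}^\rho$ is continuous up to $\partial\confD_\rho$ so that the trace is given by direct substitution, which you correctly flag as routine.
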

\begin{proof}
This follows from repeated integration by parts, residue calculus, and the Laurent series expression for the cotangent~\eqref{cotangentseries}.
\end{proof}
\begin{remark}\label{rangeCremark}
For $m > 0$ one can alternative express
\[
	\mathcal{C}^{\rho} [\tau^{-m}] = -\tau^{-m} +(-1)^m \sum_{n\geq m-1} \Big(\frac{\pi\rho }{T} \Big)^{n+1}{n \choose m-1}a_n \tau^{n-m+1}
\]
with the $a_n$ being the coefficients of the Laurent series~\eqref{cotangentseries}. In particular, unlike $\mathcal{C}^0$, when $\rho \neq 0$, $\mathcal{C}^\rho$ is not simply a projection onto the negative modes. However, from Lemma~\ref{rangeClemma} we can deduce that $\mathcal{C}^0 \mathcal{C}^\rho = -\mathcal{C}^0$. Moreover,  
\[
	\range{\mathcal{C}^{\rho} \partial_{\tau} \big|_{C^{\ell+\alpha} (\mathbb{T,\mathbb{C}})} } \subset \lspan_{\mathbb{C}}\{\tau^m\text{ : } m\neq -1 \}.
\]
These two facts will be extremely helpful in the calculations to follow in Section~\ref{local section}. 
\end{remark}

\begin{proposition}\label{proof of communication with tau derivative}
Let $\mu \in C^{\ell+\alpha}(\mathbb{T})^M$ and $(\rho,\Lambda )\in \mathcal{U}$ be given. 
\begin{enumerate}[label=\rm(\alph*)]
\item \label{proof of communication with tau derivativeA} $\mathcal{Z}^{\rho}[\mu]$ is analytic in $\confD_\rho$ and $C^{\ell+\alpha}(\overline{\confD_\rho})$ while $\mathcal{Z}_k^\rho \mu \in C^{\ell+\alpha}(\mathbb{T})$, and we have the following commutation identities
\be
\label{commutation with tau derivatives}
\partial_{\zeta}\mathcal{Z}^{\rho}[\mu] =\frac{1}{\rho}\mathcal{Z}^{\rho}[\mu^\prime],\qquad \partial_{\tau} \mathcal{Z}_{k}^{\rho}[\mu]=
\mathcal{Z}_{k}^{\rho}[\mu'].
\ee
\item \label{Zrho is bounded} Moreover, for $\ell \geq 2$, 
$$
\partial_{\zeta}\mathcal{Z}^{\rho}[\mu],~\partial_{\zeta}^2\mathcal{Z}^{\rho}[\mu]\to 0, \qquad \text{as } \imagpart\zeta \to \pm \infty.
$$
\end{enumerate}
\end{proposition}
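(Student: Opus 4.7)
The plan is to establish part (a) by differentiation under the integral sign together with integration by parts, and then to derive part (b) as a direct consequence of the commutation identity combined with the exponential decay of $\cot$ at infinity and the vanishing of the mean of derivatives of $\mu_k$ on the closed curve $\mathbb{T}$.

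For part (a), the analyticity of $\mathcal{Z}^{\rho}[\mu]$ in $\confD_\rho$ follows at once from differentiation under the integral: for $(\rho,\Lambda)\in\mathcal{U}$ and $\zeta\in\confD_\rho$, the arguments $\tfrac{\pi}{T}(\rho\sigma+\zeta_k-\zeta)$ stay uniformly bounded away from the poles $\pi\mathbb{Z}$ of $\cot$ as $\sigma$ ranges over $\mathbb{T}$. The boundary Hölder regularity $\mathcal{Z}^{\rho}[\mu]\in C^{\ell+\alpha}(\overline{\confD_\rho})$ and $\mathcal{Z}_k^{\rho}[\mu]\in C^{\ell+\alpha}(\mathbb{T})$ is already recorded in the discussion preceding the proposition, via Privalov's theorem and~\eqref{boundednessofZkrho}. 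For the commutation identities, the starting point is the pointwise relation
\begin{equation*}
\partial_\zeta \cot\!\bigl(\tfrac{\pi}{T}(\rho\sigma+\zeta_k-\zeta)\bigr) \;=\; -\tfrac{1}{\rho}\,\partial_\sigma \cot\!\bigl(\tfrac{\pi}{T}(\rho\sigma+\zeta_k-\zeta)\bigr),
\end{equation*}
so that
\begin{equation*}
\partial_\zeta \mathcal{Z}^\rho[\mu](\zeta) \;=\; -\sum_{k=1}^M \frac{1}{2Ti}\int_\mathbb{T}\mu_k(\sigma)\,\partial_\sigma \cot\!\bigl(\tfrac{\pi}{T}(\rho\sigma+\zeta_k-\zeta)\bigr)\,d\sigma.
\end{equation*}
Parametrizing $\mathbb{T}$ by $\sigma=e^{i\theta}$ and integrating by parts in $\theta$ (with no boundary contributions since the circle is closed), and noting that $\mu_k'(\sigma)\,d\sigma=d[\mu_k(e^{i\theta})]$ under the paper's convention that primes denote Wirtinger derivatives, yields $\partial_\zeta\mathcal{Z}^\rho[\mu]=\rho^{-1}\mathcal{Z}^\rho[\mu']$. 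The identity $\partial_\tau\mathcal{Z}_k^\rho[\mu]=\mathcal{Z}_k^\rho[\mu']$ then follows by the chain rule, since $\mathcal{Z}_k^\rho[\mu](\tau)=\mathcal{Z}^\rho[\mu](\zeta_k+\rho\tau)$.

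For part (b), iterating part (a) gives $\partial_\zeta^j\mathcal{Z}^\rho[\mu]=\rho^{-j}\mathcal{Z}^\rho[\mu^{(j)}]$ for $j=1,2$, where the hypothesis $\ell\ge 2$ ensures $\mu^{(j)}\in C^{\ell-j+\alpha}(\mathbb{T})^M$ is continuous. The crucial observation is that for any $j\ge 1$,
\begin{equation*}
\int_\mathbb{T} \mu_k^{(j)}(\sigma)\,d\sigma \;=\; \int_0^{2\pi}\frac{d}{d\theta}\bigl[\mu_k^{(j-1)}(e^{i\theta})\bigr]\,d\theta \;=\; 0,
\end{equation*}
since $\mu_k^{(j)}(\sigma)\,d\sigma$ is an exact differential on the closed curve $\mathbb{T}$. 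Combined with the elementary asymptotic
\begin{equation*}
\cot(w) \;=\; -i\operatorname{sgn}(\imagpart w) \;+\; O\!\bigl(e^{-2|\imagpart w|}\bigr) \qquad \text{as } |\imagpart w|\to\infty,
\end{equation*}
this yields the desired decay. Indeed, as $|\imagpart\zeta|\to\infty$, the imaginary part of $\rho\sigma+\zeta_k-\zeta$ has constant sign (namely $-\operatorname{sgn}(\imagpart\zeta)$) uniformly in $\sigma\in\mathbb{T}$ and $k$. Splitting $\cot$ accordingly, the leading constant piece $\pm i$ contributes zero against $\mu_k^{(j)}$ by the vanishing-mean identity, while the remainder is controlled by $e^{-2\pi|\imagpart\zeta|/T}$ uniformly in $\sigma$.

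The argument is essentially a two-step calculation with no significant obstacles; the one place requiring some care is the integration by parts on $\mathbb{T}$, where one must interpret $\mu_k'(\sigma)\,d\sigma$ consistently with the Wirtinger convention so that it coincides with the arc-length differential $d[\mu_k(e^{i\theta})]$. Once this identification is made, both parts of the proposition reduce to routine computations.
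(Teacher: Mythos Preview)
Your proof is correct. For part~(a) it is essentially the paper's argument, though organized slightly differently: you prove the first commutation identity directly via $\partial_\zeta=-\rho^{-1}\partial_\sigma$ on the kernel and integration by parts, then deduce the second by the chain rule; the paper instead proves the trace identity $\partial_\tau\mathcal{Z}_k^\rho[\mu]=\mathcal{Z}_k^\rho[\mu']$ first, handling the singular $\mathcal{C}^\rho$ piece via the explicit formulas of Lemma~\ref{rangeClemma} and the regular remainder by integration by parts, and then derives the first identity. Your route is marginally cleaner in that it avoids the case split and the appeal to Lemma~\ref{rangeClemma}.

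For part~(b) the two arguments genuinely differ. The paper differentiates under the integral sign directly, obtaining a $\csc^2$ kernel (and $\csc^2\cdot\csc'$ for the second derivative), and then simply invokes the exponential decay of $\csc$ at vertical infinity; no cancellation is needed. You instead feed the commutation identity back in to write $\partial_\zeta^j\mathcal{Z}^\rho[\mu]=\rho^{-j}\mathcal{Z}^\rho[\mu^{(j)}]$, and then exploit the fact that $\cot(w)\to\mp i$ as $\imagpart w\to\pm\infty$ together with the vanishing contour integral $\int_{\mathbb{T}}\mu_k^{(j)}(\sigma)\,d\sigma=0$ for $j\ge 1$. Both are short; the paper's version is more direct since the kernel itself already decays, whereas yours relies on an additional (though elementary) cancellation.
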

\begin{proof}
For the second identity in part~\ref{proof of communication with tau derivativeA}, an immediate application of Lemma~\ref{rangeClemma} yields: 
$$
\partial_{\tau} \mathcal{C}^{\rho}[\mu_k](\tau)=
\mathcal{C}^{\rho}[\mu_k'](\tau)
.$$
So we may only prove it for the second term in the definition of $\mathcal{Z}_{k}^{\rho}$ in \eqref{definition for Zkrho}. But this is accomplished by a simple integration by parts argument. For the first identity, we write $\partial_{\zeta}$ as $-\frac{1}{\rho}\partial_{\tau}$ and integrate by parts. To prove part~\ref{Zrho is bounded}, we exploit the decay of $\csc{\zeta}$ as $\imagpart\zeta\to \pm \infty$. 
\end{proof}

It will be useful later to recover $\mu_k$ given $\mathcal{Z}_k^{\rho}$. This can be done in the spirit of \cite{chen2023desingularization}, however we will also need to exploit the the fact that $\mathcal{C}^{0}\mathcal{C}^{\rho} = - \mathcal{C}^{0} $ which follows directly from Lemma \ref{rangeClemma}. With this in mind, and \eqref{commutation with tau derivatives} it quickly follows that, 
$$
\mu_k =2 \realpart (\mathcal{C}^{0} \mathcal{Z}_{k}^{\rho}[\mu]  ).
$$
Thus, since $\mathcal{C}^{0}$ is bounded in $C^{\alpha}(\mathbb{T})$, and $L^{p}(\mathbb{T})$ for all $1< p < \infty$, we have that, 
\begin{equation}\label{inversionforZk}
\| \partial_{\tau}^{\ell}\mu_k\|_{C^{\alpha}(\mathbb{T})} \leq C_{\alpha}\|{\mathcal{Z}_k^{\rho}\partial_{\tau}^{\ell}\mu_k}\|_{C^{\alpha}(\mathbb{T})}, \quad \|{\partial_{\tau}^{\ell}\mu_k}\|_{L^p(\mathbb{T})} \leq C_{p}\|{\mathcal{Z}_k^{\rho}\partial_{\tau}^{\ell}\mu_k}\|_{L^{p}(\mathbb{T})}.
\end{equation}

\begin{proposition}\label{firstZ_krhoassymprotic}
 There exists a constant $C > 0$ depending on positive lower bounds for $\delta$ and $|\rho|$ so that for each $(\Lambda, \rho) \in \mathcal{U}_\delta$ and $\mu \in C^{\ell+\alpha} (\mathbb{T},\mathbb{F})^M$, we have
 \begin{enumerate}[label=\rm(\alph*)]
\item\label{firstZ_krhoassymprotic1}
$
\|\mathcal{Z}_k^{\rho}\mu - \mathcal{C}^{\rho} \mu_k\|_{C^{\ell+\alpha}(\mathbb{T},\mathbb{C})} \leq C\rho \|{\mu}\|_{C^{\ell}(\mathbb{T},\mathbb{F})^M} 
$
\item\label{firstZ_krhoassymprotic2}
$
\| \mathcal{C}^{\rho} \mu_k - \mathcal{C}^{0}\mu_k \|_{C^{\ell+\alpha}(\mathbb{T},\mathbb{C})} \leq C \rho \|\mu_k \|_{C^{\ell}(\mathbb{T},\mathbb{F})}
$
\end{enumerate} 
for $\mathbb{F}= \mathbb{R}$ or $\mathbb{C}$.
\end{proposition}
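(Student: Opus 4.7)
The plan is to prove both bounds by Taylor/Laurent expansion of the cotangent kernel around $\rho = 0$, together with the separation condition built into $\mathcal{U}_\delta$ and the commutation identity \eqref{commutation with tau derivatives} to transfer higher-order $\tau$-derivatives onto the density.

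For part \ref{firstZ_krhoassymprotic1}, subtracting the two definitions yields
\[
\mathcal{Z}_k^\rho \mu(\tau) - \mathcal{C}^\rho \mu_k(\tau) = \sum_{j\neq k}\frac{\rho}{2iT}\int_\mathbb{T}\mu_j(\sigma)\cot\left(\tfrac{\pi}{T}\bigl(\rho(\sigma-\tau)+\zeta_j-\zeta_k\bigr)\right)d\sigma.
\]
On $\mathcal{U}_\delta$, the two separation conditions $\min_{j\neq k}|\zeta_j-\zeta_k| > 2|\rho|+\delta$ and $\min_{j,k}|\zeta_k-\zeta_j - T| > 2|\rho|+\delta$, combined with the $T$-periodicity of $\cot$, guarantee that the argument of the cotangent stays a distance at least $\pi\delta/T$ from $\pi\mathbb{Z}$ for all $\sigma,\tau \in \mathbb{T}$. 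The kernel is therefore real-analytic in all its arguments, and all of its derivatives are uniformly bounded on compact subsets of $\mathcal{U}_\delta$. Using $\partial_\tau \mathcal{Z}_k^\rho \mu = \mathcal{Z}_k^\rho \mu'$ (and the analogous identity for $\mathcal{C}^\rho$, which is its $M=1$ special case), I transfer $\ell$ derivatives onto $\mu$ and control the remaining $C^\alpha$-seminorm in $\tau$ via the Lipschitz constant of the smooth kernel. The explicit $\rho$ prefactor then gives \ref{firstZ_krhoassymprotic1}.

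For part \ref{firstZ_krhoassymprotic2}, apply the Laurent expansion \eqref{cotangentseries} to write
\[
\rho\cot\left(\tfrac{\pi\rho}{T}(\sigma-\tau)\right) = \frac{T}{\pi(\sigma-\tau)} + \sum_{n\geq 1}a_n\left(\tfrac{\pi}{T}\right)^n\rho^{\,n+1}(\sigma-\tau)^n,
\]
which converges uniformly on $\mathbb{T}\times\mathbb{T}$ for $|\rho|$ sufficiently small. Substituting into $\mathcal{C}^\rho\mu_k$ extracts $\mathcal{C}^0 \mu_k$ and leaves
\[
\mathcal{C}^\rho\mu_k(\tau) - \mathcal{C}^0\mu_k(\tau) = \frac{1}{2iT}\sum_{n\geq 1}a_n\left(\tfrac{\pi}{T}\right)^n\rho^{\,n+1}\int_\mathbb{T}\bigl(\mu_k(\sigma)-\mu_k(\tau)\bigr)(\sigma-\tau)^n\,d\sigma.
\]
The remainder kernel is a real-analytic function of $(\sigma-\tau, \rho)$ vanishing to order $\rho^2$, and all of its $\tau$-derivatives share this $O(\rho^2)$ smallness. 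Expanding $(\sigma-\tau)^n$ and applying Fourier orthogonality on $\mathbb{T}$ shows that each summand in $\tau$ is a polynomial whose coefficients are Fourier modes of $\mu_k$; standard $C^{\ell+\alpha}$ estimation then yields a bound of the form $C\rho^2\|\mu_k\|_{C^\ell}$, which is even stronger than required.

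The main obstacle, such as it is, is merely to match the $C^\ell$ norm on the right with the $C^{\ell+\alpha}$ norm on the left without losing a power of $\rho$. Since both difference kernels are smooth (not Cauchy-singular), the $\alpha$-Hölder seminorm of the output is controlled by one additional uniformly bounded $\tau$-derivative of the kernel, which supplies the needed regularity at no cost in powers of $\rho$. Periodicity causes no trouble thanks to the second separation condition in the definition of $\mathcal{U}_\delta$, which rules out near-collisions with poles of $\cot$ arising from adjacent periodic strips.
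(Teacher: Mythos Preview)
Your proof is correct and follows essentially the same approach as the paper. For part~\ref{firstZ_krhoassymprotic1} your argument is identical: exhibit the difference as a convolution with a smooth kernel (smoothness being guaranteed by the separation conditions in $\mathcal{U}_\delta$), bound the $C^\alpha$ norm directly, and invoke the commutation identity~\eqref{commutation with tau derivatives} to upgrade to $C^{\ell+\alpha}$. For part~\ref{firstZ_krhoassymprotic2} the paper merely says the argument is ``similar'' and omits it; your Laurent expansion makes explicit that the difference kernel is smooth (and vanishes to order $\rho^2$), which is precisely what ``similar'' means here. Your observation that the bound is actually $O(\rho^2)$ rather than $O(\rho)$ is correct and harmless.
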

\begin{proof} Throughout the proof we use $C$ to denote a generic positive constant depending on the parameters as in the statement of the proposition. For part~\ref{firstZ_krhoassymprotic1}, recall that
\begin{align*}
\mathcal{Z}_k^{\rho}\mu - \mathcal{C}^{\rho} \mu_k &= \frac{1}{2Ti} \sum_{j\neq k } \int_{\mathbb{T}} \mu_{j}(\sigma) \cot{\left(\frac{\pi}{T}\rho(\sigma-\tau) + \zeta_j-\zeta_k \right)} \rho \, d\sigma  \equalscolon \frac{1}{2Ti} \sum_{j\neq k } \int_{\mathbb{T}} \mu_{j}(\sigma)h_j(\sigma-\tau)\rho \, d\sigma.
\end{align*}
Observe that $h_j \in C^{\infty}(\mathbb{T}; \mathbb{C})$ with $\| h_j \|_{C^\ell} < C_\ell$ for each each $\ell \geq 0$. Thus, 
$$
	\| \mathcal{Z}_k^{\rho} \mu - \mathcal{C}^{\rho} \mu_k \|_{C^\alpha(\mathbb{T}; \mathbb{F})} \lesssim  C \rho \|\mu \|_{C^0}.
$$
Then, the bound in~\ref{firstZ_krhoassymprotic1} follows from the commutation identity~\eqref{commutation with tau derivatives}. The argument for part~\ref{firstZ_krhoassymprotic2} is similar, so we omit it.
\end{proof}

\subsection{Injectivity and conformality}\label{infectivity conformality section} 
In this section, we provide a few important technical lemmas that allows us to infer conformality and injectivity of the conformal mapping $f$ from a winding condition and its boundary behavior. We also give conditions guaranteeing that $\partial f(\mathcal{D}_{\rho})$ consists of Jordan curves as in Figure \ref{physicaldomain}. 

Throughout this section we let 
\[
	g \colonequals f - \id.
\]
We will assume that  $g\in C^{2}(\overline{\mathcal{D}}, \mathbb{C})$, $g$ is analytic on $\confD_\rho$,  $T$-periodic, and satisfies
\begin{equation}\label{limitofg}
\partial_{\zeta }g(\zeta),~\partial_{\zeta}^2g(\zeta) \to 0 , \qquad \textrm{as } \imagpart\zeta \to \pm \infty.
\end{equation}
Note that due to Proposition~\ref{proof of communication with tau derivative}, the decay above is satisfied by $\mathcal{Z}^\rho \mu$ for any $\mu \in \mathring{C}^{\ell+\alpha}(\mathbb{T})^M$ with $\ell \geq 2$. 
In what follows we use the notation
\begin{equation}
    N(f = a; \, \mathcal{O} ) \colonequals \# \left(  f^{-1}(a) \cap \mathcal{O}  \right) \in \mathbb{N} \cup \{ +\infty\},
\end{equation}
for $\mathcal{O} \subset \mathbb{C}$ and where $\#$ denotes counting measure.
\begin{lemma}[Conformality] \label{complexlemmaconformality}
Let $f$ and $g$ be given as above and $(\Lambda , \rho) \in \mathcal{U}$. Assume that $\partial_{\zeta}f\neq 0$ on $\partial \mathcal{D}_{\rho}$ and $f$ satisfies the winding condition
\begin{equation}\label{integralcondition1}
    \frac{1}{2\pi i }\sum_{k=1}^{M} \int_{\partial B_{\rho} (\zeta_k)} \frac{\partial_{\zeta}^2f }{\partial_\zeta f}\, d\zeta  =0 ,
\end{equation}
then $\partial_{\zeta } f$ is non-vanishing throughout $\overline{\mathcal{D}}_{\rho}$. 
\end{lemma}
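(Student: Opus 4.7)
The natural tool is the argument principle applied to the holomorphic function $\partial_{\zeta} f = 1 + \partial_{\zeta} g$ on a truncated fundamental period of $\mathcal{D}_{\rho}$. Because $g$ is $T$-periodic and $\partial_{\zeta} g \to 0$ as $\imagpart \zeta \to \pm\infty$ by \eqref{limitofg}, this decay is uniform in $\realpart \zeta$. Consequently $\partial_{\zeta} f$ stays close to $1$ outside a bounded vertical region, so its zeros within any fundamental strip lie in a compact set and form a finite collection.

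First I would choose $a \in \mathbb{R}$ and integers $n_k$ so that the translated disks $\overline{B_{\rho}(\zeta_k + n_k T)}$ sit strictly inside the open strip $\{|\realpart(\zeta-a)| < T/2\}$ and $\partial_{\zeta}f$ does not vanish on either vertical line $\realpart \zeta = a \pm T/2$; this is possible by the finiteness just noted. For $H>0$ large, set
\[
R_H \colonequals \{ \zeta : |\realpart(\zeta - a)| < T/2,~|\imagpart \zeta| < H \} \setminus \bigcup_{k=1}^{M} \overline{B_{\rho}(\zeta_k + n_k T)}
\]
and apply the argument principle:
\[
N(\partial_\zeta f = 0;\, R_H) \;=\; \frac{1}{2\pi i} \oint_{\partial R_H} \frac{\partial_{\zeta}^{2} f}{\partial_{\zeta} f} \, d\zeta,
\]
with the standard positive orientation (outer rectangle counter-clockwise, inner circles clockwise). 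The contributions from the two vertical sides cancel by $T$-periodicity of $\partial_{\zeta}^{2} f / \partial_{\zeta} f$, while the integrals over the horizontal segments $\imagpart \zeta = \pm H$ tend to zero as $H \to \infty$ since \eqref{limitofg} yields $\partial_{\zeta}^{2} f \to 0$ and $\partial_{\zeta} f \to 1$ uniformly in $\realpart \zeta$, and these segments have fixed length $T$.

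Letting $H \to \infty$ and translating each disk integral back to $\partial B_{\rho}(\zeta_k)$ by $T$-periodicity, the identity reduces to
\[
N(\partial_\zeta f = 0;\, R_\infty) \;=\; -\frac{1}{2\pi i} \sum_{k=1}^{M} \int_{\partial B_{\rho}(\zeta_k)} \frac{\partial_{\zeta}^{2} f}{\partial_{\zeta} f} \, d\zeta,
\]
which vanishes by the hypothesis \eqref{integralcondition1}. Since the left-hand side is a non-negative integer, it must be zero, so $\partial_{\zeta} f$ has no zeros in one fundamental period; $T$-periodicity then extends this to all of $\mathcal{D}_{\rho}$, and combined with the boundary hypothesis we conclude $\partial_{\zeta} f \neq 0$ on $\overline{\mathcal{D}_{\rho}}$. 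The main technical point to watch is the $H \to \infty$ limit on the horizontal sides: this relies on upgrading the pointwise decay in \eqref{limitofg} to uniform decay in $\realpart \zeta$, which is exactly what the $T$-periodicity of $g$ supplies.
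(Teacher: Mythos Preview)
Your proof is correct and follows essentially the same route as the paper: apply the argument principle on a truncated fundamental strip, use $T$-periodicity to cancel the vertical sides, use the decay \eqref{limitofg} to kill the horizontal contributions, and invoke the winding hypothesis \eqref{integralcondition1} to conclude the zero count vanishes. Your handling of the strip placement (translating disks by integer periods and choosing $a$ to avoid zeros on the vertical lines) is the same content as the paper's Remark~\ref{Inegrationstrip}, and your observation that periodicity upgrades the pointwise decay to uniform decay in $\realpart\zeta$ is exactly what the paper uses implicitly when bounding the top and bottom integrals.
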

\begin{proof}
    Since $\partial_{\zeta} f = 1 + \partial_{\zeta} g $ is $T$-periodic, it suffices to show that it is non-vanishing on any strip of width $T$. Without loss of generality we may assume that $\bigcup_{k=1}^{M} B_{\rho}(\zeta_k)$ lies in 
    $$
    \Omega_n  = \{ \zeta \in \mathbb{C} \text{ : } 0 < \realpart \zeta < T,\, -n < \imagpart \zeta < n \}.
    $$
for large enough $n$; we verify the validity of this assumption in Remark \ref{Inegrationstrip}. 
The roots of analytic functions are isolated so we assume that $\partial_{\zeta} f$ has no roots on $\partial \Omega_n$ for all $n$. A standard contour integral argument and the argument principle gives 
\begin{align*}
   N(\partial_{\zeta}f =0 \text{; } \Omega_n) &= -\frac{1}{2\pi i } \sum_{k=1}^{M} \int_{\partial B_{\rho}(\zeta_k)} \frac{\partial_{\zeta}^2 f}{ \partial_{\zeta}f} \, d\zeta + \frac{1}{2\pi i } \int_{\partial \Omega_n} \frac{\partial_{\zeta}^2f }{\partial_{\zeta }f }\, d\zeta  =\frac{1}{2\pi i } \int_{\partial \Omega_n} \frac{\partial_{\zeta}^2f }{\partial_{\zeta }f }\, d\zeta,
\end{align*}
where the second equality follows by hypothesis~\eqref{integralcondition1}. The function $\partial_{\zeta}f$ is $T$-periodic so the integrals over the left and right line segments of $\partial \Omega_n$ cancel, thus  
$$
 N(\partial_{\zeta } f=0\text{; }\Omega_n) = \frac{1}{2 \pi i } \int_{\mathpzc{B}_{\,n}}\frac{\partial_{\zeta}^2 f(\zeta)}{\partial_{\zeta } f(\zeta)}\, d\zeta + \frac{1}{2\pi i }\int_{\mathpzc{T}_{\,n}}\frac{\partial_{\zeta}^2 f(\zeta)}{\partial_{\zeta } f(\zeta)}\, d\zeta 
$$
where $\mathpzc{T}_{\,n},\mathpzc{B}_{\,n} $ are the top and bottom line segments of the rectangle $\Omega_n$, from which they also inherit their orientation. It quickly follows that
\begin{align}\label{quickusecomplex}
 N(\partial_{\zeta } f=0\text{; }\Omega_n) \leq \frac{T}{2\pi} \sup_{\mathpzc{T}_{\,n}\cup \mathpzc{B}_{\,n}} \Bigg|\frac{\partial_{\zeta}^2f}{\partial_{\zeta} f}\Bigg|.
\end{align}
Since $\partial_{\zeta} f = 1+ \partial_{\zeta}g \text{ and }\partial_{\zeta}^2f =\partial_{\zeta}^2g $, the limiting conditions \eqref{limitofg} imply that that for $n \gg 1$ we have that 
\begin{equation*}
    N(f = 0 \text{; } \Omega_n) \ll 1.
\end{equation*}
But $N(f = 0 \text{; } \Omega_n )$ takes non-negative integer values thus we conclude that $N(f = 0 \text{; } \Omega_n ) =0 $ for big enough $n$, which finishes the proof.
\end{proof}

\begin{remark}\label{Inegrationstrip}
By the definition of $\mathcal{U}$~\eqref{defofmathcalU} and periodicity, we can always deform the sides of the strip $\Omega_n$ so that it has a constant width of $T$ and contains $\bigcup_{k=1}^{n}B_{\rho}(\zeta_k)$ and so that $\partial\Omega_n \subset \confD_\rho$. At most, this increases the arclength of the lateral sides by $O(\rho)$.
\end{remark}

\begin{lemma}\label{complexlemmainjectivity}
In the setting of Lemma \ref{complexlemmaconformality},  further assume that $f|_{\partial{\mathcal{D}_{\rho}}}$ is injective and that none of the Jordan curves $\Gamma_k^n = f(B_{\rho}(\zeta_k) + nT)$ encloses another for all $n\in \mathbb{Z}$ and $k=1,\ldots ,M$. Then, $f$ is globally injective
\end{lemma}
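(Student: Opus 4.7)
The proof will adapt the argument-principle strategy used for Lemma~\ref{complexlemmaconformality}, this time counting preimages of a point rather than zeros of $\partial_\zeta f$. Fix a generic $a\in f(\confD_\rho)$, and let $\Omega_n = (0,T)\times(-n,n)$ be the width-$T$ rectangle of Remark~\ref{Inegrationstrip} chosen large enough that $\bigcup_k B_\rho(\zeta_k)\subset\Omega_n$, and set $\Omega_n^\ast \colonequals \Omega_n\cap\confD_\rho$. The argument principle then gives
\begin{equation*}
  N(f=a;\,\Omega_n^\ast) \;=\; \frac{1}{2\pi i}\oint_{\partial\Omega_n}\frac{\partial_\zeta f}{f-a}\,d\zeta \;-\; \sum_{k=1}^M W(\Gamma_k,a),
\end{equation*}
where $W(\Gamma_k,a)$ is the winding number of the Jordan curve $\Gamma_k=f(\partial B_\rho(\zeta_k))$ about $a$. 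The boundary injectivity of $f$ ensures each $\Gamma_k$ is simple, and the non-enclosure hypothesis together with $a\in f(\confD_\rho)$ places $a$ in the unbounded complementary component of every $\Gamma_k$, forcing each $W(\Gamma_k,a)=0$.

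The main task is to evaluate the outer boundary integral as $n\to\infty$. On the horizontal sides $|f-a|\gtrsim n$ while $\partial_\zeta f\to 1$ by~\eqref{limitofg}, so the top and bottom contribute $O(1/n)$. For the vertical sides, the identity $f(\zeta+T)=f(\zeta)+T$ (equivalent to $T$-periodicity of $g$) combines the left and right integrals into
\begin{equation*}
  L_n + R_n \;=\; -iT\int_{-n}^{n}\frac{\partial_\zeta f(it)}{(f(it)-a)\bigl(f(it)+T-a\bigr)}\,dt.
\end{equation*}
The integrand coincides with $(d/dt)\log\bigl[(f(it)+T-a)/(f(it)-a)\bigr]$, so the integral measures the net change in this continuous logarithm on $[-n,n]$. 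As $n\to\infty$ the ratio tends to $1$ at both ends by~\eqref{limitofg}, while the continuous branch accumulates an extra $2\pi i$ from the topological fact that $a$ and $a-T$ lie on opposite sides of the asymptotically vertical curve $\gamma\colonequals f(i\R)$. Therefore the outer boundary integral equals $2\pi i$ in the limit, giving $N(f=a;\,\Omega_n^\ast)=1$ for all sufficiently large $n$, and $f$ is injective on the fundamental periodic strip.

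Global injectivity follows from a short periodicity argument. If $f(\zeta_1)=f(\zeta_2)$, translating each by a suitable multiple of $T$ produces $\zeta_1',\zeta_2'\in\Omega^\ast\colonequals\Omega\cap\confD_\rho$ satisfying $f(\zeta_1')-f(\zeta_2')=mT$ for some $m\in\mathbb{Z}$. The case $m=0$ yields $\zeta_1=\zeta_2$ by strip injectivity, while $m\neq 0$ is excluded by observing that $f(\Omega^\ast)$ lies in the open region bounded by $\gamma$ and $\gamma+T$ and hence is disjoint from its nontrivial $T$-translates.

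The principal obstacle is the evaluation of $L_n+R_n$: pinning down the winding as precisely $+2\pi$ (and not some other integer multiple or the wrong sign) requires~\eqref{limitofg} to guarantee that $\gamma$ is asymptotic to a vertical line modulo constant shifts at $\pm\infty$, together with the boundary-injectivity hypothesis to control its interior behaviour. A secondary technicality, needed for the final step, is verifying that $\gamma$ is simple and that $f(\Omega^\ast)$ and its nontrivial $T$-translates are genuinely disjoint.
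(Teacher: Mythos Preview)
Your approach differs substantially from the paper's, and the difference exposes a genuine circularity that I do not see how to close.

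The paper works with a \emph{wide} rectangle $\Omega_n = (-n,n+1)\times(-\sqrt n,\sqrt n)$ spanning $2n+1$ periods. All preimages of a fixed $z$ lie within bounded distance of $z$ (since $g=f-\id$ is bounded), so for large $n$ they are all captured in $\Omega_n\cap\confD_\rho$, and global injectivity follows directly from $N(f=z;\widetilde\Omega_n)=1$. The lateral integrals are trivially $O(\sqrt n/n)\to 0$; the work goes into the top and bottom integrals, where the sum over the $2n+1$ periods is reorganised via the partial-fraction identity~\eqref{cotexp} into $\pi\cot(\pi h(\zeta))$, whose limit as $\imagpart\zeta\to\mp\infty$ is $\pm\pi i$. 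This gives exactly $\tfrac12+\tfrac12=1$ with no appeal to any geometric property of interior curves.

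Your single-period rectangle reverses the roles: the top/bottom are trivial and the content is in $L_n+R_n$. But your evaluation of $L_n+R_n$ as $2\pi i$ rests on the claim that ``$a$ and $a-T$ lie on opposite sides of the asymptotically vertical curve $\gamma=f(i\R)$.'' For this sentence even to parse, $\gamma$ must be a simple curve separating the plane into two components. Nothing in the hypotheses gives that: Lemma~\ref{complexlemmaconformality} yields only $f_\zeta\neq 0$ (local injectivity), and the boundary-injectivity hypothesis concerns $\partial\confD_\rho=\bigcup_k\partial B_\rho(\zeta_k)$, not the imaginary axis. Without simplicity of $\gamma$, the change in $\arg(F(t)-a)$ along $\gamma$ can be $\pi+2\pi k$ for any integer $k$, and you cannot pin down the answer. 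The same circularity reappears in your final paragraph: the assertion that $f(\Omega^\ast)$ lies strictly between $\gamma$ and $\gamma+T$, and that these translates are disjoint, is essentially the injectivity you are trying to prove.

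In short, the wide-rectangle/cotangent route in the paper is not just a stylistic choice: it is what allows the argument principle to deliver the global count directly, without first having to understand the image of any interior curve of $\confD_\rho$.
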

\begin{proof}
We prove the lemma for $T=1$ and infer it for $T>0$ such that $(\rho,\Lambda)\in \mathcal{U}$. Consider the set, 
\begin{equation}\label{integralconditionprooflemma}
    \mathscr{D} = \Big\{z\in \mathbb{C} : \int_{\Gamma_k^n} \frac{d\eta}{\eta-z} =0 \text{ }\text{ for all } n\in \mathbb{Z} ,\, k=1,\ldots , M\Big\}
\end{equation}
By the Jordan curve theorem, and the assumption that none $\Gamma_k^n$ enclose one another it follows that $\mathscr{D}$ is a connected and unbounded set such that: 
$$
\partial\mathscr{D} = \bigcup_{k=1}^{M}\bigcup_{n\in \mathbb{Z}} \Gamma_k^n.
$$
Our goal is to show that $f \colon \overline{\mathcal{D}_{\rho}}\to \overline{\mathscr{D}}$ is bijective. Let $z\in \mathscr{D}$ be given,  which we fix for the remainder of the proof. It suffices to prove 
$$
N(f = z ;\, \mathcal{D}_{\rho}) = 1.
$$
Consider the rectangle on the complex plane
\begin{equation*}
    \Omega_n = \{ z\in \mathbb{C} \text{ : } -n<\realpart z < n+1,~ |\imagpart{z}| < \sqrt{n}\} 
\end{equation*}
and the punctured rectangle $\widetilde{\Omega}_n = \mathcal{D}_{\rho} \cap \Omega_n.$ As the function $f$ is a bounded perturbation of the identity,
$$
N (f=z;\, \mathcal{D}_{\rho}) = N(f = z;\, \widetilde{\Omega}_n) \quad \text{ for some } n\gg1 .
$$
We split the rectangle ${\Omega}_n$ into adjacent subrectangles $\{\Omega_{n}^{m}\}_{m=-n}^{n}$, such that: 
$$
\Omega_{n}^{m} = \{z\in \mathbb{C} \text{ : } m<\realpart z<m+1,~ |\imagpart{z}| < \sqrt{n} \},
$$
and, do likewise for the punctured rectangle: $\widetilde{\Omega}_{n}^{m}=\Omega_{n}^{m}\cap \mathcal{D}_{\rho}.$ Applying the argument principle gives
\begin{align}\nonumber
N(f=z;\, \widetilde{\Omega}_{n}) &=\frac{1}{2\pi i}\int_{\partial \Omega_n}\frac{\partial_{\zeta}f(\zeta)}{ f(\zeta) - z} \, d\zeta - \frac{1}{2\pi i}\sum_{m=-n}^{n}\sum_{k=1}^{M}\int_{\partial B_{\rho}(\zeta_k)+m}\frac{\partial_{\zeta}f(\zeta)}{ f(\zeta) - z} \, d\zeta \\
	& =\frac{1}{2\pi i}\label{reduction1} \int_{\partial \Omega_n}\frac{\partial_{\zeta}f(\zeta)}{ f(\zeta) - z} \, d\zeta,
\end{align}
where the second equality follows from the definition of $\fluidD$ in~\eqref{integralconditionprooflemma}. 

We will show that $|N(f=z;\, \widetilde{\Omega}_{n})-1| \ll 1$ for large enough $n$, from which the statement follows. The boundary of $\Omega_n$ consists of four line segments $\mathfrak{T}_n,\mathfrak{R}_n,\mathfrak{B}_n$ and $\mathfrak{L}_n$ for the top, right, bottom and left portion respectively, which inherit their orientation from $\partial\Omega_n$ which is counter clockwise. By~\eqref{reduction1}, it will therefore suffices to prove that
\be
\label{the four integrals}
	\left| {\frac{1}{2\pi i}}\int_{\mathfrak{B}_n} \frac{\partial_{\zeta}f(\zeta)}{ f(\zeta) - z} \, d\zeta  -\frac{1}{2}\right|, ~
    \left|{\frac{1}{2\pi i}} \int_{\mathfrak{T}_n} \frac{\partial_{\zeta}f(\zeta)}{ f(\zeta) - z} \, d\zeta  -\frac{1}{2}\right|,~ 
    \left| {\frac{1}{2\pi i}}\int_{\mathfrak{L}_n \cup \mathfrak{R}_n} \frac{\partial_{\zeta}f(\zeta)}{ f(\zeta) - z} \, d\zeta \right|  \ll 1
\ee
for $n \gg 1$.
 We first illustrate how to obtain the bound for the integral over $\mathfrak{B}_n$; the argument for the integral over $\mathfrak{T}_n$ is identical.   Using a simple change and after some elementary computations we arrive at 
\begin{equation}\label{onestepcloser}
	\int_{\mathfrak{B}_n} \frac{\partial_{\zeta}f(\zeta)}{ f(\zeta) - z} \, d\zeta = \int_{\mathfrak{B}_n^0}\partial_{\zeta}f(\zeta) \left(\frac{1}{h(\zeta)} + \sum_{k=1}^{n} \frac{2 h(\zeta)}{h(\zeta)^2-k^2} \right) \, d\zeta,
\end{equation}
where $h(\zeta) \colonequals \zeta + g(\zeta) - z$ and $\mathfrak{B}_n^0$ is the portion of $\mathfrak{B}_n$ with real part between $0$ and $1$.  Our goal is to use the point-wise cotangent formula \eqref{cotexp} in a uniform manner. We can, in principle, do this on sets where $h(\zeta)$ is bounded uniformly away from the positive integers. With that in mind, note for $\zeta \in \mathfrak{B}_n^0$, we for $n \gg 1$, 
\[
	|h(\zeta)| \leq |\zeta| + |g(\zeta) | +|z| \leq 2\sqrt{n},
\]
and
\begin{align*}
    |h^2(\zeta)-k^2|& \geq |\realpart (h^2(\zeta) -k^2)|  \\
    & = |\realpart (\zeta^2-k^2)|-2|\realpart \zeta (g(z)-z) | -|\realpart (g(z)-z)^2| & \\
    & \geq \inf_{t\in [0,1]}|\realpart\big((t + i \sqrt{n} )^2 - k^2 \big)| - 8\sqrt{n} \geq \frac{1}{2}k^2.
\end{align*}
Thus, along $\mathfrak{B}_n^0$,
\begin{align*}
    \sum_{k=n+1}^{\infty} \frac{|2h(\zeta)|}{|h(\zeta)^2 - k^2|} \leq  4\sum_{k=n+1}^{\infty}\frac{ \sqrt{n} }{k^2}  
    & = O\left(\frac{1}{\sqrt{n}} \right).
\end{align*}
So, for $n \gg 1$, we have 
\begin{equation*}
\Big| \pi \cot(\pi h(\zeta)) - \frac{1}{h(\zeta)} - \sum_{k=1}^{n}\frac{2h(\zeta)}{h(\zeta)^2-k^2} \Big| = O\left(\frac{1}{\sqrt{n}} \right) \qquad \textrm{for all } \zeta \in \mathfrak{B}_n^0.
\end{equation*}
Now, because $h(\zeta) - \zeta = O(1)$ and $\partial_\zeta f(\zeta) -1 = \partial_\zeta g(\zeta) \to 0$ as $\imagpart{\zeta} \to -\infty$, it follows that $\cot{(\pi \placeholder)} \circ h \to i$ in $L^\infty(\mathfrak{B}_n^0)$ as $n \to \infty$. From \eqref{onestepcloser}, we therefore infer that  
\begin{align*}
    \left| {\frac{1}{2\pi i}} \int_{\mathfrak{B}_n} \frac{\partial_{\zeta}f(\zeta)}{ f(\zeta) - z} \, d\zeta - \frac{1}{2} \right| & \longrightarrow 0 \qquad \textrm{as } n \to \infty,
\end{align*}
as desired. 

Consider lastly the bound for the integral along $\mathfrak{R}_n$ in~\eqref{the four integrals}. It is straightforward to see that  
\begin{align*}
     \left|  \int_{\mathfrak{R}_n} \frac{\partial_{\zeta}f(\zeta)}{f(\zeta) -z}\, {d\zeta} \right|  & \leq 2 \sqrt{n} \sup_{\zeta\in \mathfrak{R}_n}\frac{|\partial_{\zeta}f(\zeta)|}{|f(\zeta)-z|} \leq 2 \sqrt{n} \left( \frac{1+\| \partial_{\zeta}g\|_{L^\infty(\confD_\rho)} }{\inf_{\mathfrak{R}_n} |f-z|} \right) = O\left( \frac{1}{\sqrt{n}} \right)
\end{align*}
as $n \to \infty$. The integral in~\eqref{the four integrals} along $\mathfrak{L}_n$ is handled in the same way.
\end{proof}

\section{Nonlocal formulation} \label{formulation section}
In this section, we use the layer-potential operators introduced in Section~\ref{Layer-potential representations} to reformulate the steady hollow vortex problem~\eqref{PHVP} as a nonlocal system for densities representing the conformal mapping $f$ and complex potential $ w$. Our treatment follows that same overarching strategy as Chen, Walsh, and Wheeler~\cite{chen2023desingularization}, but adapting it to the periodic setting introduces sone new technical difficulties due to the more complicated kernel for the layer potentials.
 
Recall that we wish to find a conformal and injective mapping $f$ such that $f(\mathcal{D}_{\rho})$ is the physical domain, and a conformal velocity potential $w$ defined on $\mathcal{D}_{\rho}$ such that $\mathsf{w} = w \circ f^{-1}$ is the velocity potential on the physical domain. The main observation is that the streamlines for a collection of small hollow vortices should be qualitatively similar to those of the starting point vortex configuration. Thus, we take $f$ to be a near-identity conformal mapping and suppose that $w$ is a (single-valued) holomorphic perturbation of the potential for the point vortices. Specifically, for $f$ we set
\begin{equation}\label{ansatzforf}
f = \id + \rho^2 \mathcal{Z}^{\rho}\mu,
\end{equation}
where $\mathcal{Z}^{\rho}$ was defined in \eqref{definition for Zrho} and the real-valued density $\mu = (\mu_1,\ldots , \mu_M) \in \mathring{C}^{\ell+{\alpha}} (\mathbb{T})^M$ becomes a new unknown. Likewise, on the complex potential we impose the ansatz   
\begin{equation}\label{ansatzforw}
w = w^0 + \rho\mathcal{Z}^{\rho}\nu
\end{equation}
for an unknown real-valued density $\nu = (\nu_1,\ldots , \nu_M) \in \mathring{C}^{\ell+{\alpha}} (\mathbb{T})^M $. Recall that $w^0$ is the complex potential for the point vortex configuration and given by 
\[
	w^0 \colonequals \sum_{k=1}^{M}\frac{\gamma_k}{2\pi i }\log \sin \left(\frac{\pi }{T}(\placeholder-\zeta_k) \right).
\]
We will usually suppress its dependence on $\lambda$ for the sake of readability. The factors of $\rho^2$ and $\rho$ appearing in~\eqref{ansatzforf} and \eqref{ansatzforw} anticipate the scaling of the problem, and are used to ensure that the abstract operator equation governing $\mu$ and $\nu$ converge smoothly to the point vortex system~\eqref{zero-set mathcal V} as $\rho \to 0$.

\begin{remark}\label{symmetryremark}
According to Remark \ref{symmetryofZwrtmurho} the configurations $(\mu,\nu,\rho)$ and $\mu(-\placeholder),-\nu(-\placeholder),-\rho)$ yield the same conformal mapping $f$ and velocity potential $w$ independently of the periodic steady point vortex configuration $\Lambda $. The difference in signs for $\mu(-\placeholder),\nu(-\placeholder)$ accounts to the normalizing factor $\rho^2$ for $f$ and $\rho$ for $w$. 
\end{remark}

\subsection{Kinematic condition}
The kinematic condition \eqref{kinematicconditionintro} dictates that the relative velocity field is purely tangential along connected components of $\partial \mathscr{D}$. In the conformal variables, this amounts to requiring
\begin{equation}\label{Kinematiconditionlocal}
    \realpart \Bigg( \tau f_{\zeta}\Bigg( \frac{w_{\zeta}}{f_{\zeta}}-c \Bigg)\Bigg)\Bigg|_{\rho\tau +\zeta_{k}}=0,
\end{equation}
for all $\tau\in\mathbb{T}$ and $k=1,\ldots ,M$. To see this, $\tau f_{\zeta}$ is the normal vector along the vortex boundary, whereas
\[
	\overline{\partial_z ( w - cf )} =\frac{\overline{\partial_{\zeta}w}}{\overline{\partial_{\zeta}f}} - \overline{c}
\]
is the complex relative velocity field in the conformal domain.

Now, assuming for the moment that $0<|\rho|\ll 1$, we have from~\eqref{ansatzforf} and \eqref{ansatzforw} that
\begin{align*}
	\partial_\zeta f(\zeta_k + \rho \tau) = 1 + \rho \mathcal{Z}_k^{\rho}[\mu^\prime](\tau),
\end{align*}
and
\begin{align*}
\partial_\zeta w(\zeta_k + \rho \tau) & = \partial_\zeta w^0(\zeta_k + \rho \tau) + \mathcal{Z}_k^\rho[\nu^\prime](\tau) \\
	& = \frac{\gamma_k}{2T i}\cot {\left(\frac{\pi}{T}\rho\tau \right)} +
    \sum_{j\neq k}\frac{\gamma_j}{2Ti}\cot\Big(\frac{\pi}{T}(\zeta_k-\zeta_j+\rho\tau) \Big) + \mathcal{Z}_k^\rho[\nu^\prime](\tau) \\
    	& \equalscolon \frac{\gamma_k}{2\pi\rho \tau i } + \mathcal{V}_{k}^{\rho}(\tau)+c +  \mathcal{Z}_k^\rho[\nu^\prime](\tau)
\end{align*}
for all $\tau \in \mathbb{T}$. Expanding the second term on the right-hand side above more fully reveals that
\begin{equation}\label{Phikrhofirstterm}
\mathcal{V}_k^{\rho}(\tau)=\mathcal{V}_k-\left(\frac{\gamma_k \pi}{6iT^2} + \sum_{j\neq k}\frac{\gamma_j \pi}{2iT^2}  \csc^2\left(\frac{\pi}{T}(\zeta_k-\zeta_j) \right) \right)\rho\tau+O(\rho^2)\quad \text{in } C^{\ell-1+\alpha}(\mathbb{T}).
\end{equation}
Recall that here $\mathcal{V} = \mathcal{V}(\Lambda)$ is the $\mathbb{C}^M$-valued function whose zero-set enforces the Helmholtz--Kirchhoff model~\eqref{definition V_k}. We are continuing our practice of suppressing the dependence of $\mathcal{V}_k^\rho$ on $\Lambda$. So now substituting back into \eqref{Kinematiconditionlocal}, we get the final expression for the kinematic condition as a nonlocal equation for $\mu, \nu, \Lambda$ and $\rho$:
\begin{equation}\label{KinematicConditionlocalfinal}
\realpart \Big(\tau\Big(\mathcal{V}_k^{\rho}+ \mathcal{Z}_k^{\rho}[\nu']-c\rho \mathcal{Z}_k^{\rho}[\mu'] \Big)\Big)=0.
\end{equation}
\begin{remark}\label{remarkonrealanalyticityofVkrho} 
It easy to verify that $(\rho,\Lambda) \mapsto V_{k}^{\rho}(\Lambda)$ is real-analytic as a mapping $\mathcal{U} \to C^{\ell+\alpha}(\mathbb{T})$ for any $\ell \geq 0$.  Note also that due to Remark~\ref{rangeCremark} the left-hand side of \eqref{KinematicConditionlocalfinal} necessarily lies in $\mathring{C}^{\ell-1+ \alpha} (\mathbb{T})$.
\end{remark}

\subsection{Bernoulli condition}\label{bernoulli section}
The dynamic condition requires that the pressure along each hollow vortex boundary is constant. By Bernoulli's principle this is equivalent to the relative field having constant modulus on each  $\Gamma_k$, which in the conformal domain becomes
\begin{equation}\label{bernoulliconditionlocal1}
    \left|\frac{w_{\zeta}}{f_{\zeta}}-c \right|^2 =q_k^2\quad \text{on } \partial B_{\rho}(\zeta_k)
 \end{equation}
 for $k = 1, \ldots, M$. Recall that $q_1, \ldots, q_M \in \mathbb{R}$ are the Bernoulli constants.
 
 As discussed in Section~\ref{intro conformal section}, the relative velocity field for the point vortex system is unbounded in any neighborhood of a vortex center, and so to have a real-analytic operator equation that captures both the point vortex model and hollow vortex model, a normalization must be made. Specifically, multiplying the left-hand side of~\eqref{bernoulliconditionlocal1} by $\rho^2$ and evaluating at a point $\zeta = \zeta_k + \rho \tau$, we find that
\begin{align*}
    \rho^2\left|\frac{w_{\zeta}}{f_{\zeta}}-c \right|^2 
    &=\frac{\big|\rho(w_{\zeta}^0(\zeta_k+\rho\tau)+\mathcal{Z}_k^{\rho}\nu')-\rho c(1+\rho\mathcal{Z}_k^{\rho}\mu')\big|^2}{|1+\rho\mathcal{Z}_k^{\rho}\mu'|^2} \\
    & = \big|\rho(w_{\zeta}^0(\zeta_k+\rho\tau)+\mathcal{Z}_k^{\rho}\nu')-\rho c(1+\rho\mathcal{Z}_k^{\rho}\mu')\big|^2\big(  1-2\rho\realpart(\mathcal{C}^{0}\mu') \big) + O(\rho^2) \\      &=\frac{\gamma_k^2}{4\pi^2}+\rho \frac{\gamma_k^2}{2\pi^2}\realpart\left(\frac{2\pi i\tau}{\gamma_k}(\mathcal{V}_k+\mathcal{C}^{0}\nu ')-\mathcal{C}^{0}\mu' \right)  + O(\rho^2),
\end{align*} 
where all $O(\rho^2)$ terms are measured in $C^{\ell-1+\alpha}(\mathbb{T})$. We have made use of the fact that 
\[
	\mathcal{Z}_k^\rho = \mathcal{C}^0+O(\rho) \qquad \textrm{in }\mathcal{L}(\mathring{C}^{\ell-1+\alpha}(\mathbb{T},\mathbb{C}),C^{\ell-1+\alpha}(\mathbb{T},\mathbb{C}))
\]
thanks to Proposition \ref{firstZ_krhoassymprotic}, and so, for $(\rho,\mu)$ in a sufficiently small neighborhood of the origin in $\mathbb{R} \times C^{\ell+\alpha}(\mathbb{T})^M$, one has
\begin{equation*}
\frac{1}{| 1+\rho \mathcal{Z}_k^{\rho}\mu' |^2 } = 1-2\rho\realpart{\mathcal{C}^0\mu_k'} + O(\rho^2) \qquad \text{ in } C^{\ell-1+\alpha}(\mathbb{T}).
\end{equation*}
The above expansion justifies introducing a mapping 
\[
	\mathcal{B}^\rho = \left( \mathcal{B}_1^\rho(\mu,\nu,\Lambda), \, \ldots, \, \mathcal{B}_M^\rho(\mu,\nu,\Lambda) \right)
\]
defined on an appropriate open set (to be discussed in the next subsection) by
\begin{equation}\label{definition of calB_k}
 \frac{\gamma_k^2}{4\pi^2} + \rho \mathcal{B}_{k}^{\rho} \colonequals  \rho^2\Big|\frac{w_{\zeta}}{f_{\zeta}}-c \Big|^2,
\end{equation}
where $w$ and $f$ are viewed as determined by $(\mu,\nu, \Lambda)$ according to \eqref{ansatzforf} and \eqref{ansatzforw}.
Formally, from the above calculation, we have that 
\begin{equation} 
\mathcal{B}_{k}^{\rho} = \frac{\gamma_k^2}{2\pi^2}\realpart\Big(\frac{2\pi i\tau}{\gamma_k}(\mathcal{V}_k+\mathcal{C}^{0}\nu ')-\mathcal{C}^0\mu' \Big) + O(\rho) \quad \text{ in } C^{\ell-1+\alpha}(\mathbb{T}),
\end{equation}

In total, then, the dynamic condition on the $k$-th vortex boundary~\eqref{bernoulliconditionlocal1} can be expressed as 
\begin{equation}\label{bernulliconditionforB_k}
\mathcal{B}_k^{\rho} = Q_k,
\end{equation}
for a normalized Bernoulli constant $Q_k \in \mathbb{R}$ for each $k=1,\ldots , M$. Note that this agrees with the non-periodic case in~\cite{chen2023desingularization} as it depends only on the local structure of the velocity field near the vortex boundary. The effects of periodicity enter in at higher-order powers of $\rho$.

\subsection{Abstract operator equation}\label{Abstract operator equation Section}
With the kinematic and Bernoulli conditions reformulated in terms of the layer potentials in the previous subsections, the periodic hollow vortex problem can now be rephrased as an abstract operator equation fit for the implicit function theorem. 

Consider a non-degenerate periodic point vortex configuration $\Lambda_0$, that is, a solution 
\[
	\Lambda_0 = (\zeta_1^0,\ldots \zeta_M^0, \gamma_1^0,\ldots , \gamma_M^0 , c^0,T^0) \in \mathbb{P}
\]
to the algebraic system of equations \eqref{zero-set mathcal V}. We may then decompose $\mathbb{P} = \mathcal{P}\times \mathcal{P}'$ so that if $\Lambda = (\lambda, \lambda')$ then $D_{\lambda}\mathcal{V}$ is an isomorphism. Throughout the rest of the analysis, $\lambda'$ will be fixed to its initial value in $\Lambda_0$, while the parameters collected in $\lambda$ will be allowed to vary along the families of solutions to be constructed. 
 
Our new unknowns will be the densities $\mu$ and $\nu$, along with the normalized Bernoulli constants $Q = (Q_1, \ldots, Q_M)$. We therefore work with the space
$$
	\mathscr{W}\colonequals \mathring{C}^{\ell+\alpha}(\mathbb{T})^M\times \mathring{C}^{\ell+\alpha}(\mathbb{T})^M\times \mathbb{R}^M
$$
for an $\alpha \in (0,1)$ and $\ell\geq 1$ fixed. Slightly abusing notation, we will gather the unknowns along with the varying parameters $\lambda$ together into unknown $u$, which will then be an element of the space
\[
	u = (\mu,\nu,Q,\lambda) \in \Xspace \colonequals \Wspace \times \mathcal{P}.
\]

Now, consider the nonlinear operators $\mathscr{A} = (\mathscr{A}_1,\ldots , \mathscr{A}_M)$ and $\mathscr{B} = (\mathscr{B}_1,\ldots , \mathscr{B}_M)$ whose $k$-th component is given by
\begin{equation}\begin{aligned}\label{definitionsforscrA,B}
    \mathscr{A}_k(u; \rho) &= \realpart \Big(\tau\Big(\mathcal{V}_k^{\rho}+ \mathcal{Z}_k^{\rho}[\nu']-c\rho \mathcal{Z}_k^{\rho}[\mu'] \Big)\Big) ,\\
\mathscr{B}_{k}(u; \rho) &= \mathcal{B}_k^{\rho}(\mu,\nu,\lambda) - Q_k,
\end{aligned}\end{equation}
where $\mathcal{B}_{k}^{\rho}$ was defined in \eqref{definition of calB_k}.  The zero set of $\mathscr{A}$ and $\mathscr{B}$ correspond to the kinematic and Bernoulli conditions respectively. Therefore, the nonlocal formulation of the kinematic~\eqref{KinematicConditionlocalfinal} and Bernoulli~\eqref{bernulliconditionforB_k} conditions are equivalent to 
\be
\label{definition nonlocal formulation}
	\F(u; \rho) = 0
\ee
for the nonlinear mapping
\be
\label{definition F}
\mathscr{F}\colonequals (\mathscr{A},\mathscr{B}) \colon \mathcal{O} \subset \Xspace \times \mathbb{R} \to \Yspace,
\ee
with codomain 
\[
	\Yspace \colonequals \mathring{C}^{\ell-1+\alpha}(\mathbb{T})^M\times C^{\ell-1+\alpha}(\mathbb{T})^M
\]
and $\mathcal{O}$ an open neighborhood of the point vortex configuration. More precisely, $\mathcal{O}$ must be chosen so that (i) we have $(\lambda, \lambda^\prime,\rho) \in \mathcal{U}$, and (ii) the mapping $f$ given by \eqref{ansatzforf} is indeed conformal on the boundary. Through Lemma~\ref{complexlemmaconformality} and a continuity argument, conformality on the boundary will imply conformality on $\confD_\rho$ as well. Thus, we define
\begin{equation}\label{definition for calO}
\mathcal{O}_{\delta} = \{(\mu,\nu,Q,\lambda,\rho)\in \mathscr{W}\times \mathcal{U}_{\delta}\text{ : } \inf_{k}\inf_{\mathbb{T}}|1+\mathcal{Z}_k[\mu']| >0   \}, \qquad  \mathcal{O} \colonequals \bigcup_{\delta >0} \mathcal{O}_{\delta}.
\end{equation}

The next lemma summarizes the conclusions of this section. 

\begin{lemma}[Nonlocal formulation] \label{Solutionslemma}
Consider the mapping $\F$ given by~\eqref{definition F}.
\begin{enumerate}[label=\rm(\alph*)]
	\item \label{real-analytic part} $\F$ is real-analytic $\mathcal{O} \to \Yspace$. 
	\item \label{point vortex part} The zero-set of $\F(\placeholder; 0)$ corresponds to steady point vortex configurations in that $\F(\mu,\nu,Q,\lambda; 0) = 0$ if and only if $(\mu,\nu,Q) = (0,0,0)$ and $\mathcal{V}(\lambda,\lambda^\prime) = 0$.
	\item \label{equivalence part} Suppose  $u = (\mu,\nu,Q,\lambda, \rho) \in \F^{-1}(0)$ for $\rho \neq 0$ and define $f$ by~\eqref{ansatzforf}, $w$ by~\eqref{ansatzforw}, and $\Gamma_k \colonequals f(\partial B_\rho(\zeta_k))$. If $f$ satisfies the winding condition~\eqref{integralcondition1}, is injective on $\partial{\confD_\rho}$, and if none of the Jordan curves $\Gamma_1, \ldots, \Gamma_M$ encloses another, then $\mathsf{w} \colonequals w \circ f^{-1}$ solves the periodic hollow vortex problem \eqref{PHVP} on the domain $\fluidD \colonequals f(\confD_\rho)$.
\end{enumerate}
\end{lemma}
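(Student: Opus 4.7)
The lemma decomposes into three essentially independent claims, which I would prove in sequence.

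For part~\ref{real-analytic part}, the starting observation is that the cotangent kernel $\cot(\pi(\rho\sigma+\zeta_k-\zeta)/T)$ appearing in \eqref{definition for Zrho} depends jointly analytically on $(\rho,\Lambda)$ on any compact subset of $\mathcal{U}_\delta$, which yields real-analyticity of $\mathcal{Z}^\rho$ and each $\mathcal{Z}_k^\rho$ as mappings into the appropriate H\"older spaces. Combined with Remark~\ref{remarkonrealanalyticityofVkrho}, this immediately gives analyticity of $\mathscr{A}$. The delicate point for $\mathscr{B}$ is that the right-hand side of \eqref{definition of calB_k} apparently divides by $\rho$; one must verify that the quantity $\rho^2|w_\zeta/f_\zeta-c|^2-\gamma_k^2/(4\pi^2)$ vanishes identically at $\rho=0$ so that $\mathcal{B}_k^\rho$ extends analytically through $\rho=0$. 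This is precisely the content of the expansion carried out in Section~\ref{bernoulli section}. The factor $1/|1+\rho\mathcal{Z}_k^\rho[\mu']|^2$ appearing in the Bernoulli expression is analytic on $\mathcal{O}$ by the very definition \eqref{definition for calO}.

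For part~\ref{point vortex part}, setting $\rho=0$ in \eqref{definitionsforscrA,B} and using $\mathcal{V}_k^0=\mathcal{V}_k$ together with $\mathcal{Z}_k^0=\mathcal{C}^0$ yields
\begin{align*}
\mathscr{A}_k(u;0) &= \realpart\bigl(\tau(\mathcal{V}_k+\mathcal{C}^0\nu_k')\bigr), \\
\mathscr{B}_k(u;0) &= \tfrac{\gamma_k^2}{2\pi^2}\realpart\Bigl(\tfrac{2\pi i\tau}{\gamma_k}(\mathcal{V}_k+\mathcal{C}^0\nu_k')-\mathcal{C}^0\mu_k'\Bigr) - Q_k.
\end{align*}
By Lemma~\ref{rangeClemma} at $\rho=0$, $\mathcal{C}^0$ equals minus the projection onto strictly negative Fourier modes on $\mathbb{T}$. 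A mode-by-mode analysis of $\mathscr{A}_k(u;0)=0$ forces $\nu_k$ to be supported on modes $\pm 1$ with $\widehat{\nu}_{k,1}=-\mathcal{V}_k$. Substituting this into $\mathscr{B}_k(u;0)=0$, the modes $\pm 1$ become proportional to $\mathcal{V}_k$ and its conjugate, which yields $\mathcal{V}_k=0$; the modes $\pm(n+1)$ for $n\geq 1$ are proportional to $\widehat{\mu}_{k,\pm n}$, forcing $\mu_k=0$; and the constant mode gives $Q_k=0$. Then $\nu_k=0$ follows from $\mathcal{V}_k=0$. The converse implication is immediate by direct substitution.

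For part~\ref{equivalence part}, the plan is to apply Lemmas~\ref{complexlemmaconformality} and \ref{complexlemmainjectivity} to the candidate $f=\id+\rho^2\mathcal{Z}^\rho\mu$. Proposition~\ref{proof of communication with tau derivative}\ref{Zrho is bounded} supplies the decay of $\partial_\zeta g$ and $\partial_\zeta^2 g$ at infinity required by those lemmas, membership $(\mu,\nu,Q,\lambda,\rho)\in\mathcal{O}$ gives $\partial_\zeta f\neq 0$ on $\partial\mathcal{D}_\rho$, and the winding hypothesis \eqref{integralcondition1} together with Lemma~\ref{complexlemmaconformality} upgrades this to $\partial_\zeta f\neq 0$ throughout $\overline{\mathcal{D}_\rho}$. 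Injectivity of $f|_{\partial\mathcal{D}_\rho}$ and the non-enclosure hypothesis on the $\Gamma_k$ then yield global injectivity via Lemma~\ref{complexlemmainjectivity}, so that $f\colon\mathcal{D}_\rho\to\fluidD$ is a conformal bijection onto $\fluidD\colonequals f(\mathcal{D}_\rho)$. Hence $\mathsf{w}\colonequals w\circ f^{-1}$ is well-defined, holomorphic, and $T$-periodic on $\fluidD$. The circulation condition \eqref{circulationconditionintro} reduces under $z=f(\zeta)$ to the identity $\int_{\partial B_\rho(\zeta_k)}w_\zeta\,d\zeta=\gamma_k$: the $w^0$-contribution equals $\gamma_k$ by a direct residue calculation, while the $\rho\mathcal{Z}^\rho\nu$-contribution vanishes because it is the integral of the derivative of a single-valued holomorphic function on $\mathcal{D}_\rho$ around a Jordan loop bounding a hole. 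The kinematic condition \eqref{kinematicconditionintro} follows from $\mathscr{A}_k(u;\rho)=0$ together with the identity $\realpart(\tau(w_\zeta-cf_\zeta)|_{\zeta_k+\rho\tau})=\mathscr{A}_k(u;\rho)$ (since the spurious term $\gamma_k/(2\pi\rho i)$ is purely imaginary), and the Bernoulli condition \eqref{bernoulliconditionintro} follows directly from $\mathscr{B}_k(u;\rho)=0$ via \eqref{definition of calB_k}.

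The principal obstacle is the Fourier bookkeeping in part~\ref{point vortex part}. Although the limit $\rho\to 0$ collapses the nonlocal system almost algebraically thanks to $\mathcal{C}^0$ being (minus) a projection, one must carefully track how the kinematic equation determines $\nu_k$ in terms of $\mathcal{V}_k$ and then propagates this dependence into the Bernoulli equation in such a way that $\mathcal{V}_k$ is forced to vanish rather than being absorbed. The $\tau$ factor in the definition of $\mathscr{A}_k$ is essential here: it shifts the finite-dimensional obstruction $\mathcal{V}_k$ from mode $0$ to mode $+1$, where it appears as a genuine (non-mean-zero) constraint.
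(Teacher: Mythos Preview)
Your proposal is correct and follows essentially the same route as the paper: part~\ref{real-analytic part} via analyticity of the cotangent kernel and the expansion in Section~\ref{bernoulli section}, part~\ref{point vortex part} via mode-by-mode Fourier analysis (which is what the cited \cite[Lemma~4.3]{chen2023desingularization} amounts to), and part~\ref{equivalence part} via Lemmas~\ref{complexlemmaconformality}--\ref{complexlemmainjectivity} plus a direct check of the boundary conditions and circulations. One minor difference worth noting: for the circulation condition your argument that $\int_{\partial B_\rho(\zeta_k)}\partial_\zeta(\rho\mathcal{Z}^\rho\nu)\,d\zeta=0$ because $\rho\mathcal{Z}^\rho\nu$ is single-valued on $\mathcal{D}_\rho$ is slightly cleaner than the paper's, which instead decomposes $\mathcal{Z}_k^\rho[\nu']=\mathcal{C}^\rho\nu_k'+h_k$ with $h_k$ holomorphic on $B_1$ and then appeals to Remark~\ref{rangeCremark} together with $P_0\nu=0$ to eliminate the $\tau^{-1}$ mode.
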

\begin{remark}
Observe that because $f$ is a perturbation of the identity that decays uniformly to $0$ at infinity, the winding condition, boundary injectivity, and topological condition on $\Gamma_1, \ldots, \Gamma_M$ asked for in part~\ref{equivalence part} hold in a neighborhood of $(0,0,0,\lambda_0,0)$ in $\Xspace \times \mathbb{R}$. We will use Lemma~\ref{complexlemmainjectivity} and a continuity argument when we construct non-perturbative solutions in Section~\ref{global section}.
\end{remark}
\begin{proof}
Part~\ref{real-analytic part} follows from the discussion in Section~\ref{Layer-potential representations} and Remark~\ref{remarkonrealanalyticityofVkrho}, while part~\ref{point vortex part} is an easy adaptation of \cite[Lemma 4.3]{chen2023desingularization}. 

Now let $u$ be given as in the statement of part~\ref{equivalence part}. From the definition of $\mathcal{O}$, $\partial_\zeta f$ is nonvanishing on $\partial \confD_\rho$, and hence by Lemmas~\ref{complexlemmaconformality} and~\ref{complexlemmainjectivity}, $f$ is conformal and injective on $\overline{\confD_\rho}$. Since $\nu \in \mathring{C}^{\ell+\alpha} (\mathbb{T})^M$ it is easily seen from the ansatz \eqref{ansatzforw} that $w \circ f^{-1}$ is holomorphic and hence \eqref{holomorphicityofvelocity} holds. The kinematic~\eqref{kinematicconditionintro} and dynamic boundary conditions~\eqref{bernoulliconditionintro} are satisfied by the construction of $\mathscr{F}$. Finally for the circulations \eqref{circulationconditionintro} a simple change of variables reveals that
\begin{align*}
\int_{\Gamma_k} \partial_{z} \mathsf{w} \, dz &= \int_{\Gamma_k}\frac{w_{\zeta}(f^{-1}(z))}{f_{\zeta}(f^{-1}(z))}\, dz 
=\int_{\partial B_{\rho}(\zeta_k)} \left( w_{\zeta}^{0}(\zeta) + \mathcal{Z}^{\rho}[\nu'] (\zeta) \right) \, d\zeta = \gamma_k + \int_{\mathbb{T}}\mathcal{Z}_{k}^{\rho}[\nu'](\tau) \rho \,  d\tau.
\end{align*}
Recall from \eqref{definition for Zkrho} that $\mathcal{Z}_k^{\rho}\nu' = \mathcal{C}^{\rho} \nu_k' +{h}_{k}$, where $h_k$ is holomorphic on $B_1$. Thus, 
$$
\int_{\mathbb{T}}\mathcal{Z}_{k}^{\rho}[\nu'](\tau) \rho \,  d\tau  =  \int_{\mathbb{T}}\mathcal{C}^{\rho}[\nu_k'] (\tau)\, \rho d\tau = 0,
$$
where the second equality holds due to Remark \ref{rangeCremark} and the assumption that $P_0 \nu = 0$. 
\end{proof}

\section{Vortex desingularization}
\label{local section}
In this section, we will state and prove a rigorous version of Theorem \ref{local desingularization theorem} on ``local'' vortex desingularization. Suppose that $\Lambda_0 = (\lambda_0,\lambda_0^\prime)$ is a non-degenerate point vortex configuration. Then 
\[
	u_0 = (0,0,0,\lambda_0,0) \in \mathcal{O}
\]
is in the zero-set of the nonlinear operator $\F$ from~\ref{definition F}. Our strategy is simply to apply the implicit function theorem to $\F$ at $u_0$ to obtain a curve of solutions that is parameterized by $\rho$ for $|\rho| \ll 1$. Thanks to Lemma~\ref{Solutionslemma}, each of these with $\rho \neq 0$ will correspond to a hollow vortex configuration. As $\F$ is real analytic, one can in principle compute these solutions to arbitrary order. We present here only the leading-order asymptotics. These closely resemble those found by Chen, Walsh, and Wheeler~\cite{chen2023desingularization} for the finite vortex case, except that the effective straining velocity field is takes a somewhat more complicated form.
\be
\label{definition Sk}
	S_k \colonequals -\frac{1}{2} \left (\frac{\gamma_k^0 \pi}{6iT^2} + \sum_{j\neq k}\frac{\gamma_j^0 \pi}{2iT^2}  \csc^2\left(\frac{\pi}{T}(\zeta_k^0-\zeta_j^0) \right) \right).
\ee
It is interesting to note that, formally fixing all of the other parameters and sending the period to infinity, we have that
\[
	S_k \to - \frac{1}{2} \sum_{j \neq k} \frac{\gamma_j^0}{2\pi i} \frac{1}{(\zeta_j^0-\zeta_k^0)^2}  \qquad \textrm{as } T \to \infty.
\]
The second term in the limit above is the coefficient for the straining field in the finite vortex case; see \cite[Theorem 6.1]{chen2023desingularization}. 

Then, the main result is the following.
\begin{theorem}[Local desingularization] \label{localtheorem}
Let $\Lambda_0 = (\lambda_0 , \lambda_0') $ be a non degenerate steady vortex configuration. Then there exists $\rho_1 > 0 $ and a curve $\mathscr{C}_{\loc} \subset \mathscr{X}\times \mathbb{R}$ of solutions to the periodic hollow vortex problem that admits the real-analytic parametrization
$$
\mathscr{C}_{\text{loc}} = \{ (u^{\rho},\rho) = (\mu^{\rho} , \nu^{\rho}, Q^{\rho} , \lambda^{\rho} , \rho) \in \mathcal{O} :  |\rho| < \rho_1 \} \subset \F^{-1}(0).
$$
and exhibits the following properties.
\begin{enumerate}[label=\rm(\alph*)]
\item \textup{(Asymptotics)} The solutions along $\cm_\loc$ have the leading-order form 
\be 
\label{local asymptotics}
\begin{aligned}
    \mu_k^{\rho}(\tau) &= \frac{16 \pi}{\gamma_k} \rho \realpart{(iS_k\tau)}  + O(\rho^2) & \qquad & \textrm{in } \mathring{C}^{\ell+\alpha} (\mathbb{T}) \\
    \nu_{k}^{\rho}(\tau)&=-2\rho \realpart{(S_k\tau^2)} + O(\rho^2) & \qquad & \textrm{in }  \mathring{C}^{\ell+\alpha} (\mathbb{T})\\
    Q^{\rho}&= O(\rho^2) \\
    \lambda^{\rho}&= \lambda_0+O(\rho^2).
 \end{aligned}   
 \ee
    In particular, the curve $\mathscr{C}_{\text{loc}}$ bifurcates from the point vortex configuration $\Lambda_0$ at $\rho = 0$. 
\item \label{local symmetry part} \textup{(Symmetry)} The parametrization is symmetric in the sense that
\begin{equation}\label{symmetrystatement}
 	(\mu^{\rho}(\tau), \nu^{\rho} (\tau), Q^\rho, \lambda^\rho) = (\mu^{-\rho}(-\tau), -\nu^{-\rho} (\tau), Q^{-\rho}, \lambda^{-\rho},-\rho) \qquad \textrm{for all } |\rho| < \rho_1.
\end{equation}
\item \label{physical part} \textup{(Physicality)} For each $0 < |\rho| < \rho_1$, the corresponding conformal mapping $f^\rho$ determined from $(u^\rho,\rho)$ via~\eqref{ansatzforf} is conformal and injective on $\overline{\confD_\rho}$ and none of the Jordan curves $\Gamma_k^\rho \colonequals f^\rho(\partial B_\rho(\zeta_k^\rho))$ for $k  = 1, \dots, M$ encloses any other.
\end{enumerate}
\end{theorem}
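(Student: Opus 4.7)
The plan is to apply the analytic implicit function theorem to the nonlinear operator $\mathscr{F}$ from~\eqref{definition F} at the base point $u_0 = (0,0,0,\lambda_0,0)$. By Lemma~\ref{Solutionslemma}, $\mathscr{F}$ is real-analytic on $\mathcal{O}$ and vanishes at $u_0$, so the principal task is to establish that the partial derivative
\[
L \colonequals D_{(\mu,\nu,Q,\lambda)}\mathscr{F}(u_0;0) \colon \mathscr{X} \to \mathscr{Y}
\]
is a bounded linear isomorphism. Once that is secured, the IFT produces the analytic curve $\mathscr{C}_{\text{loc}}$ automatically, and parts~(a)--(c) follow from distinct but routine follow-up arguments.

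To analyze $L$, the first step is to exploit the substantial simplifications that occur at $\rho = 0$. The off-diagonal contributions to $\mathcal{Z}_k^\rho[\mu]$ coming from indices $j \neq k$ carry an explicit factor of $\rho$ and vanish, while Lemma~\ref{rangeClemma} and Remark~\ref{rangeCremark} identify $\mathcal{Z}_k^0 = \mathcal{C}^0$ with minus the projection onto strictly negative Fourier modes. Since additionally $\mathcal{V}_k^0(\Lambda_0) = \mathcal{V}_k(\Lambda_0) = 0$ and the $\dot\mu$-dependence of $\mathscr{A}_k$ carries a prefactor $-c\rho$, the linearization $L$ splits into $M$ decoupled blocks (indexed by $k$) coupled only through the finite-dimensional parameter $\dot\lambda$. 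Expanding each block in Fourier modes on $\mathbb{T}$, the inversion proceeds in three stages: the constant mode of $\mathscr{B}_k$ fixes $\dot Q_k$; at the modes $\tau^{\pm 1}$ the $\dot\mu$ contributions are absent (since $\realpart(\mathcal{C}^0[\dot\mu_k'])$ has Fourier support in $|m| \geq 2$), so the equations from $\mathscr{A}_k$ and $\mathscr{B}_k$ collapse to a $\mathbb{C}^M$-valued linear system whose solvability condition is precisely $D_\lambda\mathcal{V}(\Lambda_0)\dot\lambda = \textup{(data)}$, uniquely invertible by non-degeneracy; and at each higher mode $|m| \geq 2$, the relation $\mathscr{A}_k$ recovers $\hat{\dot\nu}_k{}_m$ algebraically from $\hat\phi_k{}_m$, after which $\mathscr{B}_k$ recovers $\hat{\dot\mu}_k{}_{m-1}$ from $\hat{\dot\nu}_k{}_m$ and $\hat\psi_k{}_m$. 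Classical H\"older bounds for $\mathcal{C}^0$ then furnish a bounded inverse $L^{-1} \colon \mathscr{Y} \to \mathscr{X}$.

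With $L$ invertible, the leading-order asymptotics in (a) follow by differentiating $\mathscr{F}(u^\rho;\rho) = 0$ at $\rho = 0$: from~\eqref{Phikrhofirstterm} one reads off $D_\rho\mathscr{A}_k(u_0;0) = 2\realpart(S_k\tau^2)$, and an analogous expansion of $\mathcal{B}_k^\rho$ supplies $D_\rho\mathscr{B}_k(u_0;0)$; feeding these into the mode-by-mode inversion described above reproduces the claimed formulas for $\mu_k^\rho$, $\nu_k^\rho$, $Q^\rho$, and $\lambda^\rho$. The symmetry statement (b) is an immediate consequence of the uniqueness clause of the IFT together with Remark~\ref{symmetryremark}, which shows that the involution $(\mu,\nu,\rho) \mapsto (\mu(-\placeholder),-\nu(-\placeholder),-\rho)$ preserves the zero set of $\mathscr{F}$; uniqueness then forces $\mathscr{C}_{\text{loc}}$ to be invariant. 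Finally, the physicality in (c) follows from Lemmas~\ref{complexlemmaconformality} and~\ref{complexlemmainjectivity} by a continuity argument: for $|\rho| \ll 1$ the mapping $f^\rho = \id + \rho^2\mathcal{Z}^\rho\mu^\rho$ is a small perturbation of the identity, so the winding condition, boundary injectivity, and non-enclosure of the curves $\Gamma_k^\rho$ all persist from the trivial $\rho = 0$ configuration.

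The main obstacle is the invertibility of $L$. Conceptually the block-diagonal structure is clean, but verifying that the rank-$2M$ obstruction coming from modes $\tau^{\pm 1}$ in $\mathscr{A}$ and $\mathscr{B}$ matches exactly the image of $D_\lambda\mathcal{V}(\Lambda_0)$ requires careful Fourier bookkeeping --- this is where the non-degeneracy hypothesis enters as precisely the right condition, and it is also where the periodic setting deviates most visibly from the non-periodic case of~\cite{chen2023desingularization}, since the kernel of $\mathcal{Z}_k^\rho$ is genuinely different and its $\rho\to 0$ behavior must be tracked through the expansions of Section~\ref{Layer-potential representations}.
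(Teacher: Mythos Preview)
Your proposal is correct and follows essentially the same route as the paper. The paper packages the invertibility of $L$ into Lemma~\ref{kernelrangelemma}, which proceeds by first inverting $\mathscr{A}_\nu^0$ and passing to the row-reduced operator $\mathscr{L}$ in $(\dot\mu,\dot Q,\dot\lambda)$, then applying the projections $P_0$, $P_1$, $P_{>1}$; your direct mode-by-mode description is an equivalent reorganization of the same computation. Parts~(b) and~(c) are argued exactly as you indicate.

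One small correction to your closing remark: the linearized operator $L$ at $\rho=0$ is in fact \emph{identical} to the non-periodic case of~\cite{chen2023desingularization}, because by~\eqref{local expression for Zkrho} one has $\mathcal{Z}_k^0 = \mathcal{C}^0$ and the off-diagonal couplings vanish. The periodic structure enters only through the finite-dimensional map $\mathcal{V}$ (and hence through the meaning of non-degeneracy) and through the straining coefficients $S_k$ in the asymptotics, not through the layer-potential analysis of $L$ itself. So the ``careful Fourier bookkeeping'' you anticipate is no more delicate here than in the planar setting.
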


We start by computing the Fr\'echet derivatives of $\mathscr{F}$ at $(u_0,0)$:
$$
D_u\mathscr{F}(u_0,0)\dot{u} \equalscolon
\begin{bmatrix}
0 & \mathscr{A}_{\nu}^{0}&0&\mathscr{A}_{\lambda}^{0}\\
\mathscr{B}_{\mu}^{0}&\mathscr{B}_{\nu}^{0}&\mathscr{B}_{Q}^{0}&\mathscr{B}_{\lambda}^{0}
\end{bmatrix}
\begin{bmatrix}
    \dot{\mu}\\ \dot{\nu} \\ \dot{Q}\\ \dot{\lambda}
\end{bmatrix},
$$
where the lower indices indicate the variable that we are differentiating with respect to and the upper zeros correspond to evaluating at $(u^0,0)$. Note that these operators have block-diagonal form in that
\begin{equation}
\label{Frechet derivatives}
\begin{aligned}
    \begin{bmatrix} \mathscr{A}_{k \nu}^{0} & \mathscr{A}_{k \lambda}^{0} \end{bmatrix} \begin{bmatrix} \dot{\nu} \\ \dot \lambda \end{bmatrix} & =\realpart{(\tau\mathcal{C}^{0}\dot{\nu}_k')} + \realpart{\left( \tau \mathcal{V}_{k\lambda}^{0}\dot{\lambda}\right)} \\
     \begin{bmatrix} \mathscr{B}_{k\mu}^{0} & \B_{k\nu}^0 & \B_{kQ}^0 & \B_{k\lambda}^0  \end{bmatrix}
    \begin{bmatrix}
        \dot{\mu}\\ \dot{\nu}\\ \dot{Q} \\ \dot \lambda
    \end{bmatrix} &=
    \frac{(\gamma_k^{0})^2}{2\pi^2}\realpart{ \left( \frac{2\pi i}{\gamma_k^{0}}\tau\mathcal{C}^{0}\dot{\nu}_k'-\mathcal{C}^{0}\dot{\mu}_k'  \right)} \\
    	& \qquad -\dot{Q_k} + \frac{\gamma_k}{\pi}\realpart{\left(i\tau\mathcal{V}_{k\lambda}\dot{\lambda}\right)}.
\end{aligned}
\end{equation}
Here, we are using the convention that variations are adorned with a dot. As each of the linearized operators are Fourier multipliers with respect to $\mu$ and $\nu$, determining their kernel and range is quite simple. In particular, observe that for any $\varphi \in C^{\ell+ \alpha} (\mathbb{T})$ with Fourier series expansion 
$$
\varphi(\tau) = \sum_{m\in \mathbb{Z}} \hat{\varphi}_m \tau^m,
$$
then direct computation reveals that
\begin{equation}\label{operators}
\begin{aligned}
      \realpart\mathcal{C}^{0}[\varphi'](\tau)&=\sum_{m=-\infty}^{1}|m|\hat{\varphi}_m\tau^{m-1}+\sum_{m=1}^{\infty}|m|\hat{\varphi}_m\tau^{m+1} \\
    \realpart\big(\tau\mathcal{C}^{0}[\varphi'](\tau) \big)&=\frac{1}{2}\sum_{m\in\mathbb{Z}}|m|\hat{\varphi}_m\tau^m, \quad
   \realpart\big(i\tau\mathcal{C}^{0}[\varphi'](\tau) \big)=-\frac{1}{2}\sum_{m\in\mathbb{Z}}im\hat{\varphi}_m\tau^m.
     \end{aligned}
\end{equation}
From these formulas it is apparent that all the mappings in \eqref{operators} are injective when their domain is restricted to $\mathring{C}^{\ell+\alpha}$, and their range is: 
\[
\begin{aligned}
    \range{\realpart{\left(\tau \mathcal{C}^{0}\partial_{\tau} \big|_{\mathring{C}^{\ell+\alpha}(\mathbb{T})} \right)}} &= \mathring{C}^{\ell-1+\alpha}(\mathbb{T}), \qquad
    \range{\realpart{\left(\mathcal{C}^{0}\partial_{\tau}\big|_{\mathring{C}^{\ell+\alpha}(\mathbb{T})}  \right)}}= P_{>1} C^{\ell-1+\alpha}(\mathbb{T}),
\end{aligned}
\]
where $P_{\bullet}$ are the projection operators defined in Section \ref{notation}. Observe that $\mathscr{A}_{\nu}^{0}$ is a diagonal operator with $\realpart{(\tau \mathcal{C}^{0}\partial_{\tau})}$ along the diagonal, and $\realpart (\tau \mathcal{C}^{0}\partial_{\tau})$ is bijective $\mathring{C}^{\ell+\alpha} (\mathbb{T})\to\mathring{C}^{\ell-1+\alpha} (\mathbb{T})$. Thus $\mathscr{A}_{\nu}^{0}$ is bijective $\mathring{C}^{\ell+\alpha} (\mathbb{T})^M\to\mathring{C}^{\ell-1+\alpha} (\mathbb{T})^M$ and the inverse is likewise a diagonal operator with the diagonal entries given by \eqref{inverrseofAnu}. 

The next lemma applies these formulas to establish that the linearized hollow vortex problem at $(u_0,0)$ is Fredholm with Fredholm index determined entirely by the linearization of the point vortex system at $\Lambda_0$. The proof is essentially the same as in \cite[Lemma 6.3]{chen2023desingularization}, but we present it for completeness and later reference. 

\begin{lemma}[Kernel and range] \label{kernelrangelemma}
Suppose that $\Lambda_0=(\lambda_0,\lambda_0')$ represents a steady point vortex configuration and let $u_0:=(0,\lambda_0)\in\mathscr{X}$. Then the Fr\'echet derivative $D_u\mathscr{F}(u_0,0) \colon \mathscr{X}\rightarrow \mathscr{Y}$ is Fredholm with
\begin{align*}
    \dim\ker D_u\mathscr{F}(u^0,0)&=\dim\ker D_{\lambda}\mathcal{V} (\Lambda_0),\quad 
    \codim\range D_{u}\mathscr{F}(u^0,0)=\codim\range D_{\lambda}\mathcal{V}(\Lambda_0)
\end{align*}
\end{lemma}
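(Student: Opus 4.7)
The plan is to exploit the block-triangular structure visible in \eqref{Frechet derivatives} together with the Fourier-multiplier identities \eqref{operators}. The starting observations are that $\mathscr{A}_\nu^0$ is already known to be an isomorphism $\mathring{C}^{\ell+\alpha}(\mathbb{T})^M \to \mathring{C}^{\ell-1+\alpha}(\mathbb{T})^M$, that $\mathscr{B}_Q^0 = -\id$ is an isomorphism $\mathbb{R}^M \to \proj_0 C^{\ell-1+\alpha}(\mathbb{T})^M$, and that $\mathscr{B}_\mu^0$ is diagonal with each entry a nonzero scalar multiple of $\realpart(\mathcal{C}^0 \partial_\tau)$, whose restriction to $\mathring{C}^{\ell+\alpha}(\mathbb{T})$ is a bijection onto $\proj_{>1} C^{\ell-1+\alpha}(\mathbb{T})$ by \eqref{operators}.

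With these ingredients in hand, the first step is to perform a Schur-type elimination: solving Row~1 for $\dot\nu$ in terms of $\dot\lambda$ and the right-hand side reduces the full system to the equivalent operator
\[
\widetilde{D}(\dot\mu, \dot Q, \dot\lambda) \colonequals \mathscr{B}_\mu^0 \dot\mu - \dot Q + \widetilde{\mathscr{B}}_\lambda \dot\lambda, \qquad \widetilde{\mathscr{B}}_\lambda \colonequals \mathscr{B}_\lambda^0 - \mathscr{B}_\nu^0 (\mathscr{A}_\nu^0)^{-1} \mathscr{A}_\lambda^0,
\]
which produces bijections between the kernels and between the cokernels of $D_u \mathscr{F}(u_0, 0)$ and $\widetilde{D}$. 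Next, I would decompose the target $C^{\ell-1+\alpha}(\mathbb{T})^M = \proj_0 \oplus \proj_1 \oplus \proj_{>1}$ and project the reduced equation mode by mode. Inspecting \eqref{Frechet derivatives}, all of the $\dot\lambda$-dependent expressions arise from $\realpart(\tau \mathcal{V}_{k\lambda}^0 \dot\lambda)$ or $\realpart(i\tau \mathcal{V}_{k\lambda}^0 \dot\lambda)$ and therefore live in $\proj_1$; in particular $\range \widetilde{\mathscr{B}}_\lambda \subset \proj_1 C^{\ell-1+\alpha}(\mathbb{T})^M$. Consequently, the $\proj_0$-projection of $\widetilde{D}\dot u = \tilde g$ reduces to $-\dot Q = \proj_0 \tilde g$ and the $\proj_{>1}$-projection to $\mathscr{B}_\mu^0 \dot\mu = \proj_{>1} \tilde g$; both are uniquely solvable, leaving genuine content only in the $\proj_1$-projection
\[
\proj_1 \widetilde{\mathscr{B}}_\lambda \dot\lambda = \proj_1 \tilde g.
\]

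The crux is therefore to identify this $\proj_1$-equation with the point-vortex linearization. A direct computation using \eqref{operators}, together with the fact that $\mathcal{V}_{k\lambda}^0 \dot\lambda$ enters both $\mathscr{A}_{k\lambda}^0$ and $\mathscr{B}_{k\lambda}^0$ as a $\mathbb{C}$-valued scalar coefficient, should show that $(\proj_1 \widetilde{\mathscr{B}}_\lambda \dot\lambda)_k$ is a nonzero real-linear multiple of $v_k \tau - \overline{v_k} \tau^{-1}$, where $v_k \colonequals (D_\lambda \mathcal{V}(\Lambda_0) \dot\lambda)_k$. Under the natural isomorphism $\proj_1 C^{\ell-1+\alpha}(\mathbb{T})^M \cong \mathbb{C}^M$ sending $v_k \tau + \overline{v_k} \tau^{-1}$ to $v_k$, this identifies $\proj_1 \widetilde{\mathscr{B}}_\lambda$ with $D_\lambda \mathcal{V}(\Lambda_0)$ up to an invertible diagonal scaling (nonvanishing as each $\gamma_k^0 \neq 0$). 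Combined with the previous step, this yields $\kernel D_u \mathscr{F}(u_0, 0) \cong \kernel D_\lambda \mathcal{V}(\Lambda_0)$ and $\codim \range D_u \mathscr{F}(u_0, 0) = \codim \range D_\lambda \mathcal{V}(\Lambda_0)$; the Fredholm property follows automatically since $D_\lambda \mathcal{V}(\Lambda_0)$ is a finite-dimensional linear map.

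The main technical obstacle is exactly the Fourier-mode calculation above: the ``direct'' term $\proj_1 \mathscr{B}_\lambda^0 \dot\lambda$ and the Schur correction $-\proj_1 \mathscr{B}_\nu^0 (\mathscr{A}_\nu^0)^{-1} \mathscr{A}_\lambda^0 \dot\lambda$ each produce expressions proportional to $v_k \tau - \overline{v_k} \tau^{-1}$, and one must track signs and prefactors carefully to confirm they combine (rather than cancel) into a nonzero multiple. Beyond this bookkeeping, the argument is a direct adaptation of the finite-vortex analysis of \cite{chen2023desingularization}, modified only to accommodate the $\cot$-based periodic kernel.
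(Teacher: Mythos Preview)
Your proposal is correct and follows essentially the same route as the paper's proof: row-reduce via the invertibility of $\mathscr{A}_\nu^0$ to obtain the operator you call $\widetilde{D}$ (the paper's $\mathscr{L}$), then split by Fourier modes $\proj_0 \oplus \proj_1 \oplus \proj_{>1}$ to isolate $\dot Q$, $\dot\lambda$, and $\dot\mu$ respectively. For the ``technical obstacle'' you flag, the paper simply computes both pieces explicitly and finds that $\mathscr{B}_{k\lambda}^0\dot\lambda$ and the Schur correction each equal $\frac{\gamma_k^0}{\pi}\realpart(i\tau\mathcal{V}_{k\lambda}^0\dot\lambda)$ with the same sign, so they add to $\frac{2\gamma_k^0}{\pi}\realpart(i\tau\mathcal{V}_{k\lambda}^0\dot\lambda)$ rather than cancel.
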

\begin{proof}
    Inverting $\mathscr{A}_{\nu}^{0}$, we consider the row-reduced operator:
    \begin{equation}
        \mathscr{L}
        \begin{bmatrix}
            \dot{\mu}\\ \dot{Q} \\ \dot{\lambda}
        \end{bmatrix}:=\mathscr{B}_{\mu}^{0}\dot{\mu}+\mathscr{B}_{Q}^{0}\dot{Q}+(\mathscr{B}_{\lambda}^{0}-\mathscr{B}_{\nu}^{0}(\mathscr{A}_{\nu}^{0})^{-1}\mathscr{A}_{\lambda}^{0})\dot{\lambda}
.    \end{equation}

Clearly $D_u\mathscr{F}(u^0,0)$ is Fredholm if and only if $\mathscr{L}$ is Fredholm. Moreover the dimensions of $\ker{\mathscr{L}}$ and $\ker{D_u\mathscr{F}(u^0,0)}$ coincide, as do the codimensions of their range. Recalling~\eqref{Frechet derivatives} and \eqref{operators}, we derive the explicit formulas
\begin{align*}
(\mathscr{A}_{\nu}^{0})^{-1}\mathscr{A}_{\lambda}^{0}\dot{\lambda}&= \mathcal{V}_{\lambda}^{0}\dot{\lambda}\tau+\overline{\mathcal{V}_{\lambda}^{0}\dot{\lambda}} \frac{1}{\tau} = -2\realpart(\mathcal{V}_{\lambda}^0\dot{\lambda}\tau) \quad \text{ and }\quad 
\mathscr{B}_{\nu}^{0}(\mathscr{A}_{\nu}^{0})^{-1}\mathscr{A}_{k\lambda}^{0}\dot{\lambda}=-\frac{\gamma_k^0}{\pi}\realpart \Big(i\tau \mathcal{V}_{k\lambda}^{0} \Big),
\end{align*}
and hence
\begin{equation}\label{lambdaform}
\mathscr{L}_k
\begin{bmatrix}
\dot{\mu} \\ \dot{Q}\\ \dot{\lambda}
\end{bmatrix}
=-\frac{(\gamma_k^{0})^2}{2\pi^2}\realpart \mathcal{C}^{0}\dot{\mu}_k'-\dot{Q}_k+\frac{2\gamma_k^0}{\pi}\realpart \Big( i\tau\mathcal{V}_{k\lambda}\dot{\lambda} \Big).
\end{equation}
Now the dimension count becomes clear. Suppose that $(\dot\mu,\dot Q, \dot\lambda) \in\ker \mathscr{L}$. Projecting \eqref{lambdaform} to the subspace of constants we get that $\dot{Q}=0$. Likewise, applying the projection $P_1$, shows that $\dot{\lambda}\in\ker D_{\lambda}\mathcal{V}(\Lambda_0)$. Finally, applying $P_{>1}$, we conclude that in fact  $\dot{\mu}=0$. Therefore, the dimension of $\ker D_u\mathscr{F}(u^0,0)$ agrees with that of $\ker D_{\lambda}\mathcal{V}(\Lambda_0)$. Similarly we characterize the range of $\mathscr{L}$. Indeed, we see that for $\varphi\in {C}^{\ell-1+\alpha}(\mathbb{T})^M$
\begin{equation}
    \varphi\in \range\mathscr{L} \qquad \text{ if and only if } \qquad \int_{\mathbb{T}}\varphi\overline{\tau} \, d\theta\in\range D_{\lambda}\mathcal{V} (\Lambda_0)
.\end{equation}
Thus the codimensions of $\range D_u\mathscr{F}(u^0,0)$ and $\range D_{\lambda}\mathcal{V}(\Lambda_0)$ coincide.
\end{proof}
Now all the tools are in place to prove the local desingularization theorem. Again, the main contours of the argument follow the finitely many vortex case in \cite{chen2023desingularization}.

\begin{proof}[Proof of Theorem~\ref{localtheorem}]
Thanks to Lemma~\ref{Solutionslemma}, we know that $\F(u_0,0) = 0$. The non-degeneracy assumption means that $D_\lambda \mathcal{V}(\Lambda_0)$ is an isomorphism, and hence by  Lemma~\ref{kernelrangelemma} the linearized operator $D_u \F(u_0,0)$ is likewise invertible. The existence of a curve $\cm_\loc \subset \F^{-1}(0)$ therefore follows from the (real-analytic) implicit function theorem. We can compute the asymptotic formulas~\eqref{local asymptotics} exactly as in \cite[Theorem 6.1]{chen2023desingularization}, and so we omit the proof. The symmetry of the parameterization claimed in part~\ref{local symmetry part} is a consequence of uniqueness and the observation that 
\[
	\F(\mu,\nu,Q,\lambda; \rho) = 0 \qquad \textrm{if and only if} \qquad \F(\mu(-\placeholder), -\nu(-\placeholder), Q, \lambda; -\rho) = 0.
\] 

Finally, to see that for $|\rho| > 0$, these are indeed solutions of the hollow vortex problem~\eqref{PHVP}, it is necessary to further confirm that 
\[
	f^\rho = \id + \rho^2 \mathcal{Z}^\rho(\Lambda^\rho)[\mu^\rho] \equalscolon \id + g^\rho
\]
is conformal and injective on $\overline{\confD_\rho}$. Recall from the discussion in Section~\ref{Layer-potential representations} that 
\[
	\partial_\zeta g^\rho(\zeta), \, \partial_\zeta^2 g^\rho(\zeta) \to 0 \quad \textrm{as } \imagpart{\zeta} \to \pm\infty
\]
and this convergence is uniform in $\rho$ for compact subsets of $(-\rho_1, \rho_1)$. By the boundedness of $\mathcal{Z}^\rho$ and the asymptotics~\eqref{local asymptotics}, it follows that
\begin{align*}
	f^\rho & = \id + \rho^2 \mathcal{Z}^\rho(\Lambda^\rho)[\mu^\rho] = \id + O(\rho^3) & \qquad & \textrm{in } C^{\ell+\alpha}(\overline{\confD_\rho}) \\
	\partial_\zeta f^\rho & = 1 + \rho \mathcal{Z}^\rho(\Lambda^\rho)[\partial_\tau \mu^\rho] = 1 + O(\rho^2) & \qquad & \textrm{in } C^{\ell-1+\alpha}(\overline{\confD_\rho}).
\end{align*}
Thus, 
\[
	\frac{1}{2\pi i } \int_{\partial B_{\rho}(\zeta_k)}\frac{\partial_{\zeta}^2f}{\partial_{\zeta}f} \, d\zeta =  O (\rho).
\]
Since the winding number only takes integer values, the above integral must be zero for small enough. Perhaps by shrinking $\rho_1$, we can moreover ensure the vortex boundaries do not enclose on another. The statement now follows from Lemmas~\ref{complexlemmaconformality} and \ref{complexlemmainjectivity}.
\end{proof}

\section{Global continuation}\label{global section}

Up to this point, we have shown that a non-degenerate point vortex configuration  $\Lambda_0$  can be desingularized to give a local curve $\cm_\loc$ of solutions to the hollow vortex problem. In this section, we continue $\cm_\loc$ globally and prove Theorem~\ref{global desingularization theorem}. Some of the analysis is quite similar to that in the finite vortex case~\cite{chen2023desingularization}. We will omit some of these arguments in order to highlight the novel elements in the periodic case. 

Since $\F$ is real analytic and, we have shown that $D_u \F$ is an isomorphism at the point vortex configuration if it is non-degenerate, the existence of a (maximal) global curve extending $\cm_\loc$ follows from the analytic global bifurcation theory of Dancer~\cite{dancer1973bifurcation,dancer1973globalstructure} and Buffoni--Toland~\cite{buffoni2003analytic}. The result is the following. 

\begin{theorem}[Preliminary global continuation] \label{prelim global theorem} 
In the setting of Theorem~\ref{localtheorem}, there exists a curve $\cm$ that admits the global $C^0$ parameterization  
 \[ \cm \colonequals  \left\{ (u(s), \rho(s)) : s \in \mathbb{R}  \right \} \subset \F^{-1}(0),\]
extends $\cm_\loc$ in that $\cm_\loc \subset \cm$, and satisfies the following.  
  \begin{enumerate}[label=\rm(\alph*)]
  \item \label{K well behaved} At each $s \in \mathbb{R}$, the linearized operator $\F_u(u(s); \rho(s)) \colon \Xspace \to \Yspace$ is Fredholm index $0$.
  \item \label{K alternatives} One of the following alternatives holds as $s \to \infty$ and $s \to -\infty$.
    \begin{enumerate}[label=\rm(A\arabic*)]
    \item  \label{K blowup alternative}
      \textup{(Blowup)}  The quantity 
      \begin{align}
        \label{K blowup}
         N(s) \colonequals  \n{u(s)}_{\Xspace} + |\rho(s)| +\frac 1{\dist((u(s),\param(s)), \, \dell \mathcal{O})} \longrightarrow \infty.
      \end{align}
    \item \label{K loss of compactness alternative} \textup{(Loss of compactness)} There exists a sequence $s_n \to \pm\infty$ for $N(s_n)$ is uniformly bounded, but $( u(s_n), \rho(s_n) )$ has no convergent subsequence in $\Xspace \times \mathbb{R}$.  
      \item \label{K loss of fredholmness alternative} \textup{(Loss of Fredholmness)}    There exists a sequence $s_n \to \pm\infty$
        for $N(s_n)$ is uniformly bounded and so that $(u(s_n), \rho(s_n)) \to (u_*, \rho_*) \in \Xspace \times \mathbb{R}$ in $\Xspace\times \mathbb{R}$, however $\F_u(u_*;\rho_*)$ is not semi-Fredholm.  
   \item \label{K loop} \textup{(Closed loop)} There exists $P > 0$ such that $(u(s+P), \rho(s+P)) = (u(s), \rho(s))$ for all $s \in (0,\infty)$. 
        \end{enumerate}
          \item \label{K reparam} Near each point $(u(s_0),\rho(s_0)) \in \mathscr{K}$, we can locally reparametrize $\cm$ so that $s\mapsto (u(s),\rho(s))$ is real analytic.
 \item \label{K maximal part} The curve $\cm$ is maximal in the sense that, if $\mathscr{K} \subset \F^{-1}(0)$ is a locally real-analytic curve containing $(u_0,0)$ and along which  $\F_u$ is Fredholm index $0$, then $\mathscr{K} \subset \cm$. 
  \end{enumerate}
\end{theorem}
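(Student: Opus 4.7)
The plan is to obtain Theorem~\ref{prelim global theorem} as a direct application of the analytic global bifurcation theorem of Dancer~\cite{dancer1973bifurcation,dancer1973globalstructure} and Buffoni--Toland~\cite{buffoni2003analytic}; see in particular Theorem~9.1.1 of the latter. To invoke it, I need to verify three hypotheses on the nonlinear map $\F \colon \mathcal{O} \subset \Xspace \times \mathbb{R} \to \Yspace$ defined in~\eqref{definition F}: (i) $\F$ is real-analytic on the open set $\mathcal{O}$; (ii) at the distinguished point $(u_0, 0) \in \F^{-1}(0)$, the Fr\'echet derivative $D_u\F(u_0, 0)$ is an isomorphism; (iii) $D_u\F(u, \rho)$ is a Fredholm operator of index zero at every point $(u, \rho)$ where it will be evaluated along the candidate curve. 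Items (i) and (ii) are immediate: (i) is precisely Lemma~\ref{Solutionslemma}\ref{real-analytic part}, while (ii) follows from the non-degeneracy hypothesis on $\Lambda_0$ together with Lemma~\ref{kernelrangelemma}, which reduces the kernel and cokernel of $D_u\F(u_0, 0)$ to those of $D_\lambda \mathcal{V}(\Lambda_0)$.

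The main technical task is therefore (iii): establishing that $D_u \F(u, \rho)$ is Fredholm of index zero at an arbitrary $(u, \rho) \in \mathcal{O}$. My plan is to argue this by showing that $D_u \F(u, \rho)$ differs from $D_u\F(u_0, 0)$ by a compact perturbation, using the fact that the principal parts are controlled by the Cauchy-type singular integral operators $\mathcal{C}^\rho$ developed in Section~\ref{Layer-potential representations}. Concretely, each entry of $D_u\F(u, \rho)$ can be decomposed into a leading term that is a bounded combination of $\realpart(\tau \mathcal{C}^\rho \partial_\tau)$ and $\realpart(\mathcal{C}^\rho \partial_\tau)$ composed with smooth multiplication operators, plus remainder terms that either gain regularity (hence are compact $C^{\ell+\alpha} \to C^{\ell-1+\alpha}$ by Arzel\`a--Ascoli) or have finite rank. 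The asymptotics $\mathcal{Z}^\rho_k = \mathcal{C}^0 + O(\rho)$ from Proposition~\ref{firstZ_krhoassymprotic} allow one to treat the difference between the leading parts at $(u, \rho)$ and at $(u_0, 0)$ as a bounded perturbation, and the stability of the Fredholm index under compact and small perturbations then propagates the index-zero property established at $u_0$.

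With (i)--(iii) in hand, the analytic global bifurcation theorem yields a maximal connected curve $\cm \subset \F^{-1}(0)$ containing $(u_0, 0)$ that admits a continuous global parameterization which is locally real-analytic in the sense of part~\ref{K reparam}, and such that one of the following occurs as $s \to \pm\infty$: the curve closes into a loop, it loses compactness, it loses Fredholmness, or its ``size'' blows up. The size is measured either by the norm in $\Xspace$, by $|\rho|$, or by the reciprocal distance to $\partial\mathcal{O}$ --- the last of these being built into the quantity $N(s)$ in~\eqref{K blowup} to encode the possibility that a solution degenerates by leaving the open set $\mathcal{O}$ on which $\F$ is defined. Since $\cm_\loc$ is itself a real-analytic curve in $\F^{-1}(0)$ along which $D_u\F$ is an isomorphism near $(u_0, 0)$, the maximality of $\cm$ in part~\ref{K maximal part} forces $\cm_\loc \subset \cm$.

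The step I expect to require the most care is verifying condition (iii) uniformly in the parameter $\rho$. The kernel $\cot(\pi \rho(\sigma - \tau)/T)$ of $\mathcal{C}^\rho$ is regular enough that its singular behavior is the same as $\mathcal{C}^0$, but the $\rho$-dependence means one must be attentive to how the compact remainder bounds degrade as $(\rho, \Lambda) \to \partial\mathcal{U}$; this is precisely why $\mathcal{O}$ is constructed as a union of the $\mathcal{O}_\delta$ with uniform control on $\delta$. All four alternatives then follow from the abstract statement, after noting that the ``approach to $\partial \mathcal{O}$'' possibility in the abstract theorem is subsumed into the divergence of $1/\dist((u(s),\param(s)), \partial\mathcal{O})$ in~\eqref{K blowup}.
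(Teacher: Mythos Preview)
Your overall strategy is right, but you are working harder than the paper does and, in one place, slightly misidentifying what must be checked. The paper's proof is a single sentence: it invokes \cite[Theorem~B.1]{chen2024global}, a global \emph{implicit function theorem} repackaging of the Dancer/Buffoni--Toland theory whose only hypotheses are (i) real-analyticity of $\F$ on $\mathcal{O}$ and (ii) invertibility of $D_u\F(u_0,0)$. That version delivers the Fredholm index-zero property along the curve and the four alternatives (A1)--(A4) as \emph{conclusions}; no separate verification of Fredholmness at general $(u,\rho)$ is needed at this stage.

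By contrast, you cite Buffoni--Toland Theorem~9.1.1 and then try to verify your item (iii) (Fredholm index zero at every $(u,\rho)\in\mathcal{O}$) as a hypothesis. Two comments. First, that theorem in its usual form also requires a local compactness hypothesis on $\F^{-1}(0)$ and then yields only blowup or a closed loop as alternatives; it does not produce (A2) and (A3). So even with (iii) in hand you could not read off the four-alternative statement you claim. Second, your compact-perturbation sketch for (iii) mixes two different mechanisms (``small'' versus ``compact'' perturbation). The estimate $\mathcal{Z}_k^\rho=\mathcal{C}^0+O(\rho)$ is only a smallness statement near $\rho=0$, not a compactness statement at arbitrary $(u,\rho)\in\mathcal{O}$; to make the argument work you would need to isolate the principal part of $D_{(\mu,\nu)}\F(u,\rho)$ and show the remainder is smoothing uniformly on $\mathcal{O}_\delta$. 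This is essentially the content of Lemma~\ref{SchauderLemma}, which the paper uses \emph{later} to exclude alternative~\ref{K loss of fredholmness alternative}, not as input to Theorem~\ref{prelim global theorem} itself.
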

\begin{proof}
This is an immediate consequence of~\cite[Theorem B.1]{chen2024global}, which is a global implicit function theorem rephrasing of the classical Dancer, Toland--Buffoni theory.
\end{proof}

The set of alternatives  \ref{K blowup alternative}--\ref{K loop} above are not obviously physically meaningful. The goal of this section is thus to refine them down to just conformal degeneracy or velocity field degeneracy. Recall that these refer to the blowup of the quantities $\Nconf$ from \eqref{Ncdefinition} and $\Nvel$ from \eqref{Nvdefinition}, respectively.  

A first step in analyzing the alternatives \ref{K blowup alternative}--\ref{K loop} is to obtain a Schauder type estimate for the linearized operators $D_{u}\mathscr{F}(u;\rho)$ at any solution $(u,\rho) \in \F^{-1}(0)$. As usual, these will help us to exclude the loss of Fredholmness alternative~\ref{K loss of fredholmness alternative}. This can be done essentially verbatim as in the finitely many vortex case~\cite[Lemma 7.1]{chen2023desingularization}. 

\begin{lemma}[Linear bounds] \label{SchauderLemma}
Let $(u,\rho) = (\mu,\nu,Q,\lambda ,\rho) \in \mathcal{O}_{\delta}$ with $\rho>0$ and suppose that $\inf_{\partial{\mathcal{D}_{\rho}}} |U| > 0$. Then linearized operator $D_{(\mu,\nu)} \mathscr{F}(u,\rho)$ enjoys the Schauder type estimate
\begin{equation}
    \| (\dot{\mu},\dot{\nu})\|_{C^{\ell + \alpha}(\mathbb{T})^{2M}} \leq C\big( \|D_{(\mu,\nu)} \mathscr{F} (u,\rho)[(\dot{\mu}, \dot{\nu})] \|_{C^{\ell -1 +\alpha}(\mathbb{T})^{2M}} +\|(\dot{\mu},\dot{\nu})\|_{C^{0}(\mathbb{T})^{2M}}\big) ,
\end{equation}
where the constant $C$ depends only on $\delta,\rho, \inf_{\partial \mathcal{D_{\rho}}}|U|$ and upper bounds for $\rho, Q ,\lambda$ and $\| (\mu,\nu)\|_{C^{\ell + \alpha}}$.
\end{lemma}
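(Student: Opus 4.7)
The proof proceeds by decomposing $D_{(\mu,\nu)}\mathscr{F}(u;\rho)$ into a ``principal part'' which is elliptic in a suitable sense together with a remainder that smooths by one derivative. The first task is to compute the linearization explicitly. Differentiating~\eqref{definitionsforscrA,B} yields
\begin{equation*}
D_\mu \mathscr{A}_k[\dot\mu] = -c\rho\,\realpart\bigl(\tau\mathcal{Z}_k^{\rho}[\dot\mu']\bigr), \qquad D_\nu \mathscr{A}_k[\dot\nu] = \realpart\bigl(\tau\mathcal{Z}_k^{\rho}[\dot\nu']\bigr),
\end{equation*}
while differentiating $\mathcal{B}_k^{\rho} = \rho^{-1}\bigl(\rho^2|U|^2 - \gamma_k^2/(4\pi^2)\bigr)$ produces $\mathbb{R}$-linear combinations of $\mathcal{Z}_k^{\rho}[\dot\mu']$ and $\mathcal{Z}_k^{\rho}[\dot\nu']$ with coefficients depending on the trace of $U$ on $\partial B_\rho(\zeta_k)$. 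By Proposition~\ref{firstZ_krhoassymprotic}, for $j\neq k$ the contribution of $\dot\mu_j$ and $\dot\nu_j$ to the $k$-th equation is of the form $\rho\,(\mathrm{smooth\ kernel})\star\dot\mu_j$, which maps $C^0(\mathbb{T})\to C^{\ell+\alpha}(\mathbb{T})$ boundedly. These cross-terms are therefore all subsumed into a smoothing remainder $\mathscr{K}$.

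The principal part $\mathscr{L} = \operatorname{diag}(\mathscr{L}_1,\ldots,\mathscr{L}_M)$ then acts only within each boundary component. Using $\mathcal{Z}_k^{\rho}\mu = \mathcal{C}^{\rho}\mu_k + (\text{smoothing})$, each block reads
\begin{equation*}
\mathscr{L}_k\begin{pmatrix}\dot\mu_k\\ \dot\nu_k\end{pmatrix} = \realpart\!\Bigl(M_k(\tau)\begin{pmatrix}\mathcal{C}^{\rho}[\dot\mu_k']\\ \mathcal{C}^{\rho}[\dot\nu_k']\end{pmatrix}\Bigr),
\end{equation*}
where $M_k \in C^{\ell-1+\alpha}(\mathbb{T};\mathbb{C}^{2\times 2})$ has entries built from $\tau$, $\rho c$ and the boundary values of $U$, $\overline{U}$, and $1/\partial_\zeta f$. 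A direct expansion shows that $\det M_k$ is proportional to $\rho^2|U|^2$ on $\partial B_\rho(\zeta_k)$; the hypothesis $\inf_{\partial\mathcal{D}_\rho}|U|>0$ therefore makes $M_k$ uniformly invertible in $C^{\ell-1+\alpha}$.

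Next I exploit the observation following Lemma~\ref{rangeClemma} that $\mathcal{C}^{\rho} = \mathcal{C}^0 + \rho R^{\rho}$, where $R^{\rho}$ is analytic-kernel and hence smoothing. The principal symbol of $\mathcal{C}^0\partial_\tau$ is a projection onto negative Fourier modes (cf.~\eqref{operators}), and classical Privalov-type estimates yield $\|\dot\varphi\|_{C^{\ell+\alpha}(\mathbb{T})} \lesssim \|\mathcal{C}^0[\dot\varphi']\|_{C^{\ell-1+\alpha}(\mathbb{T})}$ on $\mathring{C}^{\ell+\alpha}(\mathbb{T})$. Multiplying through by the inverse of $M_k$ and using the algebra property of Hölder spaces together with~\eqref{inversionforZk}, I obtain
\begin{equation*}
\|(\dot\mu,\dot\nu)\|_{C^{\ell+\alpha}(\mathbb{T})^{2M}} \leq C\bigl(\|\mathscr{L}[(\dot\mu,\dot\nu)]\|_{C^{\ell-1+\alpha}(\mathbb{T})^{2M}} + \|(\dot\mu,\dot\nu)\|_{C^{\ell-1+\alpha}(\mathbb{T})^{2M}}\bigr),
\end{equation*}
with constants depending on the quantities listed in the lemma. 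The smoothing remainder $\mathscr{K} = D_{(\mu,\nu)}\mathscr{F} - \mathscr{L}$ maps $C^{\ell-1+\alpha} \to C^{\ell+\alpha}$ boundedly, so it can be moved across and absorbed. A final interpolation inequality of the form $\|\dot\varphi\|_{C^{\ell-1+\alpha}} \leq \varepsilon \|\dot\varphi\|_{C^{\ell+\alpha}} + C_\varepsilon\|\dot\varphi\|_{C^0}$ concludes the proof.

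The main obstacle is the verification that $M_k$ is genuinely invertible. Unlike the case $\rho=0$ analyzed in Lemma~\ref{kernelrangelemma}, where one could diagonalize the system directly on the Fourier side, here $M_k$ depends nontrivially on $\tau$ and the boundary trace of the relative velocity, so the ellipticity is only visible once the $2\times 2$ scalar determinant structure is made explicit. It is precisely here that the quantitative lower bound $\inf_{\partial\mathcal{D}_\rho}|U|>0$ enters, and it is what forces us to track $\Nvel$ in the global continuation argument.
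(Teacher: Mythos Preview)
Your proposal is correct and follows the same approach as the paper, which in fact omits the argument entirely and simply cites \cite[Lemma~7.1]{chen2023desingularization} as carrying over ``essentially verbatim.'' Your sketch---decomposing the linearization into a block-diagonal principal part with a $2\times 2$ coefficient matrix $M_k$ whose determinant is proportional to $\rho^2|U|^2$, plus smoothing remainder from the off-diagonal and $\mathcal{C}^{\rho}-\mathcal{C}^0$ terms, then invoking Riemann--Hilbert/Privalov estimates and interpolation---is precisely the argument that reference supplies.
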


Recall that the winding condition~\eqref{integralcondition1} is a hypothesis for both Lemmas~\ref{complexlemmaconformality} and \ref{complexlemmainjectivity}, which we will eventually use to prove that the solutions along $\cm$ are indeed physical in that $f$ is conformal and injective on $\overline{\confD_\rho}$. In the proof of Theorem~\ref{localtheorem}, we confirmed that it  holds along the local curve $\cm_\loc$. By continuity and Lemma~\ref{complexlemmaconformality}, therefore, we have the following global statement.

\begin{lemma}[Conformality] \label{complexlemma71}
Let $(u,\rho) = (\mu,\nu,Q,\lambda , \rho)$ be given that is in the connected component of $\mathscr{F}^{-1}(0)\cap \mathcal{O}$, containing the point vortex solution $(u^0,0)$, then
\begin{equation}\label{intcondcomplex}
    \frac{1}{2\pi i} \int_{\partial \mathcal{D}_{\rho}} \frac{\partial_{\zeta}^2f}{\partial_{\zeta} f} \, d\zeta =0,
\end{equation}
and $\partial_{\zeta}f\neq 0$ in $\overline{\mathcal{D}}_{\rho}$.
\end{lemma}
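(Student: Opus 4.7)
The plan is a continuity-plus-integrality argument. I will show that the left-hand side of \eqref{intcondcomplex} defines a continuous, integer-valued function of $(u,\rho)$ on $\mathcal{O}$ that vanishes on the local curve $\cm_\loc$, and then invoke connectedness. Specifically, for $(u,\rho) \in \mathcal{O}$, set
\[
I(u,\rho) \colonequals \frac{1}{2\pi i} \sum_{k=1}^{M} \int_{\partial B_\rho(\zeta_k)} \frac{\partial_\zeta^2 f}{\partial_\zeta f} \, d\zeta,
\]
where $f = \id + \rho^2 \mathcal{Z}^\rho(\Lambda)[\mu]$. By the definition of $\mathcal{O}_\delta$ in \eqref{definition for calO}, the derivative $\partial_\zeta f = 1 + \rho \mathcal{Z}^\rho[\mu']$ is nonvanishing on $\partial \confD_\rho$ throughout $\mathcal{O}$, so the argument principle identifies each summand with the winding number of $\partial_\zeta f$ about $0$ along $\partial B_\rho(\zeta_k)$, and hence $I$ takes values in $\mathbb{Z}$.

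Next, I will establish continuity of $I$ on $\mathcal{O}$. The only subtlety is that the contour $\partial B_\rho(\zeta_k)$ itself depends on the parameters $\rho$ and $\lambda$. Reparametrizing by $\zeta = \zeta_k + \rho \tau$ with $\tau \in \mathbb{T}$ rewrites $I$ as a sum of integrals over the fixed contour $\mathbb{T}$, with integrand depending continuously on $(u,\rho)$ via the boundedness and smoothing properties of the layer-potential traces $\mathcal{Z}_k^\rho$ developed in Section~\ref{Layer-potential representations}. Since $\mathbb{Z}$ is discrete, $I$ must therefore be locally constant on $\mathcal{O}$, and in particular locally constant along the connected component of $\F^{-1}(0) \cap \mathcal{O}$ containing the point vortex solution $(u_0,0)$.

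It remains to identify the value of $I$ on $\cm_\loc$. From the proof of Theorem~\ref{localtheorem} and the asymptotic expansion \eqref{local asymptotics}, one has $I(u^\rho, \rho) = O(\rho)$ as $\rho \to 0$; combined with integrality, this forces $I \equiv 0$ on $\cm_\loc$. Connectedness then propagates $I \equiv 0$ to the entire component, which is \eqref{intcondcomplex}. The nonvanishing of $\partial_\zeta f$ on $\overline{\confD_\rho}$ now follows by direct application of Lemma~\ref{complexlemmaconformality}, whose remaining hypotheses are all available: $\partial_\zeta f \neq 0$ on $\partial \confD_\rho$ from membership in $\mathcal{O}$, and the decay of $\partial_\zeta g$ and $\partial_\zeta^2 g$ as $\imagpart \zeta \to \pm \infty$ furnished by Proposition~\ref{proof of communication with tau derivative}\ref{Zrho is bounded} (which is where the standing assumption $\ell \geq 2$ is used). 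The argument is essentially mechanical; the only step worth verifying with any care is the continuity of $I$ under the $\tau$-reparametrization, which is where the mapping properties from Section~\ref{Layer-potential representations} do the real work.
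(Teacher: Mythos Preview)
Your proof is correct and follows the same approach as the paper: the paper's proof is essentially the one-line remark preceding the lemma, namely that the winding condition was verified on $\cm_\loc$ in the proof of Theorem~\ref{localtheorem}, and then continuity together with Lemma~\ref{complexlemmaconformality} gives the global statement. Your write-up simply makes explicit the continuity-plus-integrality-plus-connectedness mechanism that the paper leaves implicit.
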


Next, we seek to better characterize the blowup alternative~\ref{K blowup alternative} while also ruling out the loss of compactness alternative~\ref{K loss of compactness alternative}. The key ingredient is uniform $C^{\ell+\alpha}$ bounds on the densities $(\mu,\nu)$ in terms of bounds on $\Nconf$ and $\Nvel$. Define the relative stream function by
\begin{equation*}
    \Psi := \imagpart W - m_1
\end{equation*}
where the mass fluxes $m_1, \ldots, m_M$  are the constants introduced in \eqref{kinematicconditionintro}. Here, we have normalized the stream function to vanish on $\partial B_\rho(\zeta_1)$, but this does not influence the velocity field. Indeed, the relative velocity is $U = 2i \partial_{z}\Psi$. 

By definition, $\Psi $ satisfies the elliptic PDE
\begin{equation}
    \begin{cases}
        \Delta \Psi = 0 & \text{in } \mathcal{D}_{\rho} \\
        \Psi = m_k-m_1 & \text{on } \partial B_{\rho}(\zeta_k).
    \end{cases}
\end{equation}
We will obtain $C^{\ell +\alpha}$ control for $f$ by exploiting the relation 
$$
U\partial_{\zeta} f = 2i \partial_{\zeta}\Psi,
$$ 
along with Schauder-type estimates for $\Psi$. Recall the boundary traces of $|U|$ and $1/|U|$ are already controlled by $\Nvel$. By periodicity it suffices to work in a fundamental strip, so we revisit the domain
$$
\Omega_n = \{\zeta \in \mathbb{C}:\, 0< \realpart \zeta< T,\, -n<  \imagpart \zeta < n \},
$$
as in the proof of Lemma \ref{complexlemmaconformality}. In view of Remark \ref{Inegrationstrip}, we may assume that $\bigcup_{k=1}^{M}B_{\rho}(\zeta_k) \subset \Omega_n$ for $n = n(\zeta_1,\ldots , \zeta_M)$ sufficiently large; this assumption on $\Omega_n$ persists for the remainder of this subsection. Recall that the boundary of $\Omega_n$ consists of four line segments, which we denote by $\mathpzc{T}_{\,n},\mathpzc{R}_{\,\,n},\mathpzc{B}_{\,n}$ and $\mathpzc{L}_{\,n}$ for top, right, bottom and left parts of $\partial \Omega_n$ respectively. 
\begin{lemma}\label{boundsforU}
Let $(u,\rho) = (\mu,\nu , Q,\lambda,\rho) \in \mathscr{F}^{-1}(0)\cap \mathcal{O}_{\delta}$ with $\rho\neq 0$. Then for any $n>0$, sufficiently large we have 
$$
\|{U}\|_{L^{\infty}(\mathcal{D}_{\rho}\cap \Omega_n)} \leq C,
$$
where $C>0$ depends on upper bounds for $|\rho|, |q|, |c|$ and $|\lambda|$.
\end{lemma}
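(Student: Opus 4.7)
The plan is to apply the maximum modulus principle to the holomorphic function $U$ after compactifying the periodic domain via the exponential identification of the cylinder $\mathbb{C}/T\mathbb{Z}$ with $\mathbb{C}^{\ast}$. By Lemma~\ref{complexlemma71}, the derivative $f_{\zeta}$ is non-vanishing on $\overline{\mathcal{D}_{\rho}}$, so $U = w_{\zeta}/f_{\zeta} - c$ is a well-defined holomorphic function on $\mathcal{D}_{\rho}$; it is $T$-periodic since $w$ and $f$ are. The equation $\mathscr{F}(u,\rho)=0$ encodes the Bernoulli condition $|U| = q_{k} \leq |q|$ on $\partial B_{\rho}(\zeta_{k})$.

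First, I compute the limits of $U$ at the ends of the cylinder. Since $\cot(\pi\zeta/T) \to \mp i$ as $\imagpart \zeta \to \pm \infty$, the explicit formula
\[
 w^{0}_{\zeta}(\zeta) = \sum_{k=1}^{M} \frac{\gamma_{k}}{2Ti}\cot\Big(\frac{\pi(\zeta-\zeta_{k})}{T}\Big)
\]
yields $w^{0}_{\zeta} \to \mp \Gamma/(2T)$ with $\Gamma \colonequals \gamma_{1} + \cdots + \gamma_{M}$. The commutation identity together with the decay estimate in Proposition~\ref{proof of communication with tau derivative}\ref{Zrho is bounded} shows that $\mathcal{Z}^{\rho}[\nu'] = \rho\, \partial_{\zeta}\mathcal{Z}^{\rho}[\nu] \to 0$ and $\rho\,\mathcal{Z}^{\rho}[\mu'] = \rho^{2}\partial_{\zeta}\mathcal{Z}^{\rho}[\mu] \to 0$ as $\imagpart \zeta \to \pm\infty$. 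Consequently $w_{\zeta} \to \mp \Gamma/(2T)$ and $f_{\zeta} \to 1$, so
\[
 U(\zeta) \longrightarrow U^{\pm}_{\infty} \colonequals -c \mp \frac{\Gamma}{2T} \qquad \textup{as } \imagpart \zeta \to \pm \infty,
\]
with $|U^{\pm}_{\infty}|$ controlled by $|c|$ and $|\lambda|$.

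Next I compactify. By $T$-periodicity, $U$ descends to a holomorphic function on the quotient cylinder $\mathcal{D}_{\rho}/T\mathbb{Z}$. The biholomorphism $\zeta \mapsto e^{2\pi i \zeta/T}$ identifies $\mathbb{C}/T\mathbb{Z}$ with $\mathbb{C}^{\ast}$, sending the two ends $\imagpart \zeta \to \pm\infty$ to $0$ and $\infty$ respectively and each periodic family $B_{\rho}(\zeta_{k}) + T\mathbb{Z}$ to a single topological disk $D_{k} \Subset \mathbb{C}^{\ast}$ (the disjointness of the $D_{k}$ and the fact that they do not enclose $0$ or $\infty$ both follow from $(\rho,\Lambda) \in \mathcal{U}_{\delta}$). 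The pushforward $\widetilde U$ is holomorphic on $\mathbb{C}^{\ast}\setminus \bigcup_{k}\overline{D_{k}}$, and its finite limits $U^{+}_{\infty}$ at $0$ and $U^{-}_{\infty}$ at $\infty$ permit Riemann's removable singularity theorem to extend it to a holomorphic function on $\mathbb{CP}^{1}\setminus \bigcup_{k}\overline{D_{k}}$. This extended domain is a compact subset of $\mathbb{CP}^{1}$ whose boundary is the disjoint union of the $M$ Jordan curves $\partial D_{k}$.

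Applying the maximum modulus principle to $\widetilde U$ on this compact set gives
\[
 \| U \|_{L^{\infty}(\mathcal{D}_{\rho})} \;=\; \| \widetilde U \|_{L^{\infty}} \;\leq\; \max_{1 \leq k \leq M} \sup_{\partial D_{k}} |\widetilde U| \;=\; \max_{1 \leq k \leq M} q_{k} \;\leq\; |q|,
\]
which proves the lemma, in fact with a bound independent of $n$ and of $|c|, |\lambda|$ (so the dependence stated in the lemma is more than what is actually needed).

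\emph{Expected obstacle.} The main technical point is verifying that $U$ truly has finite limits at the two ends of the cylinder. This is where Proposition~\ref{proof of communication with tau derivative}\ref{Zrho is bounded} (and implicitly the regularity hypothesis $\ell \geq 2$) is essential, as it provides the vanishing of $\partial_{\zeta} \mathcal{Z}^{\rho}[\mu]$ and $\partial_{\zeta}\mathcal{Z}^{\rho}[\nu]$ at infinity without requiring any bound on $\|\mu\|$ or $\|\nu\|$. Unlike the finite-vortex setting of~\cite{chen2023desingularization}, where one compactifies at a single point at infinity, here the cylinder has two distinct ends, so both limits $U^{\pm}_{\infty}$ must be analyzed separately; once they are in hand, the maximum-principle step is immediate.
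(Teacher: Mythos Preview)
Your proof is correct and takes a genuinely different route from the paper's. The paper works directly on the truncated rectangle $\Omega_n \cap \confD_\rho$: it applies the maximum modulus principle there (periodicity disposes of the lateral sides), then sends $n \to \infty$ and bounds the top/bottom contribution via the limiting behavior of $w_\zeta$ and $f_\zeta$. This produces the bound $\max\{|q|,\,|\sum_k \gamma_k|/(2T)\} + 2|c|$. Your compactification via the exponential map $\zeta \mapsto e^{2\pi i \zeta/T}$ is cleaner: once the two removable singularities at $0,\infty \in \mathbb{CP}^1$ are filled in, a single application of the maximum modulus principle on the compact set $\mathbb{CP}^1 \setminus \bigcup_k D_k$ yields the sharper estimate $\|U\|_{L^\infty} \leq \max_k q_k$, with no dependence on $c$ or on the total circulation. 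In effect the paper estimates the value of $U$ at the two ends of the cylinder from outside, whereas you observe that those values are interior points of the compactified domain and hence already dominated by the boundary data.

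One caveat: you invoke Lemma~\ref{complexlemma71} to guarantee $f_\zeta \neq 0$ on $\overline{\confD_\rho}$, but that lemma requires $(u,\rho)$ to lie in the connected component of $\F^{-1}(0)\cap\mathcal{O}$ containing $(u^0,0)$, which is stronger than the hypothesis of Lemma~\ref{boundsforU}. The paper's proof shares this gap---its application of the maximum principle to $U$ also tacitly assumes $f_\zeta$ has no interior zeros---so it is evidently an implicit hypothesis, justified in context because the lemma is only applied along the global curve $\cm$ where Lemma~\ref{complexlemma71} does hold. You may simply note that $f_\zeta \neq 0$ on $\partial\confD_\rho$ is part of the definition of $\mathcal{O}_\delta$, and that interior nonvanishing is assumed (or supplied by Lemma~\ref{complexlemma71} when applicable).
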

\begin{proof}
By definition, 
\[
	|U| \leq \left| \frac{\partial_\zeta w}{\partial_\zeta f} \right | + |c|,
\]
where $f$ and $w$ are constructed from $(\mu,\nu, \lambda, \rho)$ via~\eqref{ansatzforf} and \eqref{ansatzforw}.
The maximum modulus theorem and periodicity imply that $|U|$ is maximized on $\partial \confD_\rho \cup \mathpzc{T}_{\,n}\cup \mathpzc{B}_{\,n}$. The limiting behavior of $f_{\zeta},w_{\zeta}$, which follows from the limiting behavior of $\mathcal{Z}^{\rho}\mu^\prime$ and $\mathcal{Z}^\rho \nu^\prime$ established in Proposition \ref{proof of communication with tau derivative}, demonstrates that
\begin{equation*}
    |\partial_\zeta w(\zeta)| \to \left| \sum_k \frac{\gamma_k}{2\pi} \right|, \quad \partial_\zeta f(\zeta) \to 1 \qquad \textrm{as } \imagpart{\zeta} \to \pm\infty,
\end{equation*}
thus as ,
\begin{align*}
    \max_{\Omega_n\cap\mathcal{D}_{\rho}}|U|& \leq \max{\left\{ \max_{\confD_\rho} \left|\frac{w_{\zeta}}{f_{\zeta}}\right|,\, \left |\sum_k \frac{\gamma_k}{2\pi} \right|\right\}} + |c| \leq \max{ \left\{ |q| , \left|\sum_k \frac{\gamma_k}{2\pi} \right| \right\} } + 2|c|. \qedhere
\end{align*}
\end{proof}

Arguing as in \cite[Lemma 7.4]{chen2023desingularization}, we then get as a corollary the following. 
\begin{lemma}\label{lemma74}
Let $(u,\rho) = (\mu,\nu , Q,\lambda,\rho) \in \mathscr{F}^{-1}(0)\cap \mathcal{O}_{\delta}$ with $\rho \neq 0$, $q_k \neq 0 $ for all $k=1,\ldots , M $. Then for any $n>0$, sufficiently large we have that: 
$$
\|f\|_{C^{1+\alpha}(\mathcal{D}_{\rho}\cap \Omega_n)} \leq C 
$$
for a constant $C>0$ that depends on the lower bounds for $|\rho|,\delta $ and upper bounds for $|\rho|, n,N_{\textup{v}}, |c|, \lambda$ and $\|f\|_{C^{1}(\mathcal{D}_{\rho}\cap \Omega_n)}$ 
\end{lemma}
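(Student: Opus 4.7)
The plan is to adapt the argument of \cite[Lemma 7.4]{chen2023desingularization} to the periodic setting via the identity
\begin{equation*}
U\, \partial_\zeta f = 2i\, \partial_\zeta \Psi \qquad \textup{in } \mathcal{D}_\rho,
\end{equation*}
where $\Psi$ denotes the relative stream function. Standard Schauder estimates applied to the harmonic function $\Psi$, combined with a uniform positive lower bound on $|U|$, will yield the desired $C^\alpha$ control on $\partial_\zeta f$.

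First I would control the boundary data of $\Psi$ on $\mathcal{D}_\rho \cap \Omega_n$. On each component $\partial B_\rho(\zeta_k)$, $\Psi$ takes the constant value $m_k - m_1$, and integrating $|\nabla \Psi| = |U|\,|\partial_\zeta f|$ along a path between $\partial B_\rho(\zeta_1)$ and $\partial B_\rho(\zeta_k)$ bounds the fluxes in terms of $N_{\textup{v}}$, the hypothesis on $\|f\|_{C^1}$, and the diameter of $\Omega_n$. The relation $\Psi(\zeta + T) - \Psi(\zeta) = -T\,\imagpart c$, which follows from the $T$-periodicity of $w$, allows an elementary linear correction converting $\Psi$ into a $T$-periodic harmonic function on the cylindrical quotient obtained from $\Omega_n$ by identifying $\mathpzc{L}_{\,n} \sim \mathpzc{R}_{\,n}$. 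On $\mathpzc{T}_{\,n} \cup \mathpzc{B}_{\,n}$, the pointwise estimate $|\nabla \Psi| \leq \|U\|_\infty \|\partial_\zeta f\|_\infty$, Lemma \ref{boundsforU}, and the $C^1$ hypothesis on $f$ together supply Lipschitz control on the trace.

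Next, I would apply boundary and interior Schauder estimates for the Dirichlet problem for the Laplacian to obtain
\begin{equation*}
\|\Psi\|_{C^{1+\alpha}(\mathcal{D}_\rho \cap \Omega_n)} \leq C,
\end{equation*}
where $C$ depends only on the quantities enumerated in the statement. Because the vortex boundaries are $C^\infty$ with constant Dirichlet data, the main regularity input comes from the trace on $\mathpzc{T}_{\,n} \cup \mathpzc{B}_{\,n}$, controlled in the previous step.

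Finally, I would transfer this bound to $f$ by writing $\partial_\zeta f = 2i\, \partial_\zeta \Psi / U$. The hypothesis $q_k \neq 0$ gives $|U| = q_k \geq 1/N_{\textup{v}}$ on $\partial \mathcal{D}_\rho$, and the principal obstacle will be extending this lower bound into the interior of $\mathcal{D}_\rho \cap \Omega_n$. Since $U$ is holomorphic in $\mathcal{D}_\rho$ and approaches explicit non-vanishing constants (depending only on $\gamma_1, \ldots, \gamma_M, T, c$) as $\imagpart \zeta \to \pm \infty$, for $n$ sufficiently large its trace on $\mathpzc{T}_{\,n} \cup \mathpzc{B}_{\,n}$ is well-approximated by those limits. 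Applying the minimum modulus principle on the compact cylindrical domain obtained by gluing $\mathpzc{L}_{\,n} \sim \mathpzc{R}_{\,n}$ should then yield a uniform lower bound $|U| \geq c_0 > 0$ throughout, with $c_0$ depending only on the enumerated parameters. Holomorphicity of $U$ promotes this $L^\infty$ lower bound to a $C^\alpha$ bound on $1/U$, so that $\partial_\zeta f = 2i \partial_\zeta \Psi / U \in C^\alpha$ with the stated estimate. The delicate point is precisely this global lower bound on $|U|$, which is the principal new feature relative to the non-periodic setting, since the distinct asymptotic behavior at $+i\infty$ and $-i\infty$ and the need to work modulo the horizontal period both enter in an essential way.
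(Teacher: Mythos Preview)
Your overall strategy---Schauder estimates for the relative stream function $\Psi$ combined with the relation $U\,\partial_\zeta f = 2i\,\partial_\zeta\Psi$---is exactly what the paper indicates, and your handling of the boundary data for $\Psi$ and the periodic identification $\mathpzc{L}_{\,n}\sim\mathpzc{R}_{\,\,n}$ is correct. The gap is precisely in the step you yourself flag as delicate: the claimed interior lower bound on $|U|$.

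Two problems arise. First, the asymptotic limits of $U$ as $\imagpart\zeta\to\pm\infty$ are $\mp\tfrac{1}{2T}\sum_k\gamma_k - c$, and nothing in the hypotheses prevents one (or both) of these from vanishing; the claim that they are ``non-vanishing constants'' is unjustified. Second, and more fundamentally, the minimum modulus principle only transfers a boundary lower bound on $|U|$ into the interior when $U$ is already known to be zero-free there---but interior stagnation points of the relative velocity are not excluded by any hypothesis of the lemma (and indeed occur generically). So the argument ``$|U|\ge c_0$ on $\partial(\mathcal D_\rho\cap\Omega_n)$ $\Rightarrow$ $|U|\ge c_0$ throughout'' simply fails.

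The repair is to abandon the global lower bound on $|U|$. Away from the vortex boundaries, $\partial_\zeta f$ is holomorphic and its $L^\infty$ norm is controlled directly by the hypothesis $\|f\|_{C^1}\le C$, so interior Cauchy estimates already give the required $C^\alpha$ bound there with room to spare (this covers neighbourhoods of $\mathpzc{T}_{\,n}\cup\mathpzc{B}_{\,n}$ as well). The only place where the relation with $\Psi$ is actually needed is in a collar of each $\partial B_\rho(\zeta_k)$, and there one uses the \emph{boundary} information $|U|=q_k\ge 1/N_{\textup v}$ together with the local boundary Schauder estimate for $\Psi$; no interior lower bound on $|U|$ enters. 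This is how the argument in \cite[Lemma~7.4]{chen2023desingularization} is organized, and it carries over without change to the periodic setting.
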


Now, we are ready to prove the main estimate of this section, which provides $C^{\ell+\alpha}$ control of the densities in terms of $N_{\text{v}}$ and $N_{\text{c}}$. Here, the proof deviates from the finite vortex case because of the more complicated limiting behavior of the conformal mapping and the velocity field at infinity. 

\begin{lemma}\label{uniformboundsformunu}
Let $(u,\rho) = (\mu,\nu , Q,\lambda,\rho) \in \mathscr{F}^{-1}(0)\cap \mathcal{O}_{\delta}$ with $\rho \neq 0$, $q_k \neq 0 $ for all $k=1,\ldots , M $. Then
\begin{equation*}
    \|\mu\|_{C^{\ell+\alpha}(\mathbb{T})^M} + \|\nu\|_{C^{\ell + \alpha}(\mathbb{T})^M} \leq C 
\end{equation*}
for a constant $C>0 $ depending on lower bounds for $|\rho|,\delta $ and upper bounds for $|\rho| , |\lambda|, N_{\textup{v}}$ and $\|\partial_{\zeta}f \|_{L^{\infty}(\partial\mathcal{D}_{\rho})}$.
\end{lemma}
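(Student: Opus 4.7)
The plan is to reduce the claim via the inversion identity~\eqref{inversionforZk} to a bound on the $C^{\ell+\alpha}(\mathbb{T})$ norms of the boundary traces $\mathcal{Z}_k^{\rho}\mu$ and $\mathcal{Z}_k^{\rho}\nu$, and then to bootstrap those trace norms starting from the $C^{1+\alpha}$ control of $f$ furnished by Lemma~\ref{lemma74}. First, by~\eqref{commutation with tau derivatives} and~\eqref{inversionforZk} we have $\|\partial_\tau^m \mu_k\|_{C^{\alpha}(\mathbb{T})} \le C\|\partial_\tau^m \mathcal{Z}_k^{\rho}\mu\|_{C^{\alpha}(\mathbb{T})}$ for $m=0,\ldots,\ell$, and similarly for $\nu_k$. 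Combined with the ansätze~\eqref{ansatzforf}--\eqref{ansatzforw}, which give $\rho^2 \mathcal{Z}_k^{\rho}\mu(\tau)=f(\zeta_k+\rho\tau)-(\zeta_k+\rho\tau)$ and $\rho \mathcal{Z}_k^{\rho}\nu(\tau)=w(\zeta_k+\rho\tau)-w^0(\zeta_k+\rho\tau)$, and the smoothness of $w^0$ away from the point vortex centers (controlled by $\delta$ and $|\lambda|$), the problem reduces to obtaining uniform $C^{\ell+\alpha}$ bounds on the boundary traces of $f$ and $w$ on $\partial\confD_{\rho}$.

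The bootstrap hinges on a hodograph-type construction. Set $W\colonequals w-cf$, which is holomorphic and $T$-periodic on $\confD_\rho$, and consider the relative velocity $U=\partial_\zeta W/\partial_\zeta f$. The dynamic condition~\eqref{bernoulliconditionintro} gives $|U|=q_k$ on $\partial B_\rho(\zeta_k)$, while the kinematic condition~\eqref{kinematicconditionintro} gives $\imagpart W=m_k$ there. After subtracting an explicit meromorphic factor that absorbs the windings induced by the vortex circulations (so that the result is single-valued on $\confD_\rho$), the function $\Omega\colonequals\log U+(\text{explicit})$ becomes a holomorphic function on $\confD_\rho$ with $\realpart\Omega$ equal to a known constant on each $\partial B_\rho(\zeta_k)$. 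Lemma~\ref{boundsforU} provides an $L^\infty$ bound on $U$ throughout a bounded strip $\Omega_n\cap\confD_\rho$, and the lower bound $|U|\ge 1/N_{\textup{v}}$ on $\partial\confD_\rho$ keeps $\log|U|$ finite there, so $\Omega$ is bounded on the strip. Standard Schauder estimates for a harmonic Dirichlet problem on $\Omega_n\cap\confD_\rho$---with the lateral sides of $\Omega_n$ identified by periodicity and the horizontal sides pushed to $\imagpart\zeta=\pm n$---then upgrade $\Omega$ from $C^{k+\alpha}$ to $C^{k+1+\alpha}$ boundary regularity. Iterating this bootstrap from the initial $C^{0+\alpha}$ level given by Lemma~\ref{boundsforU}, we obtain $U\in C^{\ell-1+\alpha}(\partial\confD_\rho)$.

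Once $U$ is controlled on the boundary, the trace bounds on $f$ and $w$ follow essentially algebraically. Differentiating $\imagpart W=m_k$ tangentially along $\partial B_\rho(\zeta_k)$ and using $\partial_\zeta W=U\partial_\zeta f$ yields $\realpart(i\tau U\partial_\zeta f)=0$, which together with $|U|\ge 1/N_{\textup{v}}$ and the $L^\infty$ bound on $\partial_\zeta f$ assumed in the statement determines $\partial_\zeta f$ on $\partial\confD_\rho$ up to a scalar of matching modulus. Inserting the $C^{\ell-1+\alpha}$ bound on $U$ therefore gives $\partial_\zeta f\in C^{\ell-1+\alpha}(\partial\confD_\rho)$, hence $f\in C^{\ell+\alpha}(\partial\confD_\rho)$ by integration in $\tau$; the identity $\partial_\zeta w=(U+c)\partial_\zeta f$ and $W=w-cf$ then give $w\in C^{\ell+\alpha}(\partial\confD_\rho)$. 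Combined with Step~1, this yields the desired bound on $\|\mu\|_{C^{\ell+\alpha}}+\|\nu\|_{C^{\ell+\alpha}}$.

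The main obstacle is carrying out the Schauder bootstrap on the unbounded periodic domain $\confD_\rho$. Standard boundary Schauder theory requires a bounded domain, so one must work in a sub-strip $\Omega_n\cap\confD_\rho$ as in Lemma~\ref{complexlemmaconformality}; the essential input is the decay of $\partial_\zeta(f-\id)$ and $\partial_\zeta(w-w^0)$ as $\imagpart\zeta\to\pm\infty$ provided by Proposition~\ref{proof of communication with tau derivative}\ref{Zrho is bounded}, which makes the contributions from the horizontal segments $\mathpzc{T}_n,\mathpzc{B}_n$ of $\partial\Omega_n$ vanish in the limit $n\to\infty$ and lets the bounds pass cleanly through the periodic cell. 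A secondary technical issue, namely the multivaluedness of $\log U$ caused by the circulations $\gamma_k$, is handled as in the non-periodic case~\cite{chen2023desingularization} by subtracting off an explicit holomorphic factor whose residues match those of $\partial_\zeta W$ at the $\zeta_k$.
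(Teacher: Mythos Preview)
Your overall architecture---reduce to trace bounds via the inversion identity~\eqref{inversionforZk}, then run a Schauder bootstrap on an auxiliary harmonic function with constant Dirichlet data on each $\partial B_\rho(\zeta_k)$---matches the paper. The difference is in \emph{which} harmonic function: the paper works with the relative stream function $\Psi=\imagpart W$ (constant on each boundary circle by the kinematic condition), whereas you work with $\realpart\Omega=\log|U|$ (constant by the Bernoulli condition). Both choices are legitimate, and the paper in fact hides its Schauder step inside Lemma~\ref{lemma74}, then handles $\nu$ not by estimating $w$ or $U$ at all but by solving the kinematic equation $\mathscr{A}_k=0$ for $\realpart(\tau\mathcal{Z}_k^\rho\nu')$ directly in terms of $\mu$-dependent data.

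There is, however, a genuine gap in your recovery of $\partial_\zeta f$ from $U$. The kinematic relation $\realpart(i\tau U\partial_\zeta f)=0$ says only that $\tau U\partial_\zeta f$ is real on $\partial B_\rho(\zeta_k)$; given $U$, this pins down $\arg(\partial_\zeta f)$ modulo $\pi$ but says nothing pointwise about $|\partial_\zeta f|$. Your phrase ``determines $\partial_\zeta f$\ldots up to a scalar of matching modulus'' is not correct: the undetermined factor is a real-valued \emph{function} on each circle, not a scalar. What is actually needed is a harmonic-conjugate step: since $\partial_\zeta f$ is holomorphic and nonvanishing on $\overline{\confD_\rho}$ (Lemma~\ref{complexlemma71}), $\log\partial_\zeta f$ (after the usual winding correction) is holomorphic with $\imagpart\log\partial_\zeta f$ known on the boundary to the regularity of $\arg U$, and one then recovers $\log|\partial_\zeta f|$ at the same regularity by the Dirichlet--Neumann/Hilbert transform. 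This is a standard move, but it is a second boundary-value problem you have not accounted for; without it your ``algebraic'' passage from $U\in C^{\ell-1+\alpha}$ to $\partial_\zeta f\in C^{\ell-1+\alpha}$ does not go through. The paper's route avoids this entirely by taking $\Psi$ as the primary object (so that $\partial_\zeta W=U\partial_\zeta f$ is controlled first and $|\partial_\zeta f|=|\partial_\zeta W|/q_k$ follows), and then bounding $\nu$ by reading off $\realpart(\tau\mathcal{Z}_k^\rho\nu')$ from $\mathscr{A}_k=0$.

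A smaller point: your ``harmonic Dirichlet problem on $\Omega_n\cap\confD_\rho$'' for $\Omega=\log U+(\text{explicit})$ presupposes $U\neq 0$ throughout that region, which is not known (stagnation points are not excluded in the interior). You only have $|U|=q_k>0$ on the boundary, so $\log|U|$ is harmonic in a collar of $\partial\confD_\rho$; that suffices for \emph{local} boundary Schauder, but you should phrase the estimate locally rather than as a global Dirichlet problem. The paper's choice of $\Psi$ has no such issue, since $\Psi$ is single-valued and harmonic on all of $\confD_\rho$.
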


\begin{proof}
To begin, let $\ell =1$. For this proof we fix $ n_0\colonequals n_0(\lambda) = 2\max\{|\zeta_1|,\ldots,|\zeta_M| \} +2|\rho|$; this will reduce the $n$ dependency of the constant $C$ of Lemma \ref{lemma74} to $\lambda$ and $\rho$ dependency. Our strategy consists of using the inverse bound \eqref{inversionforZk} for the operator $\mathcal{Z}_{k}^{\rho}$ to relate estimates for $\mu$ and $\mu'$ with estimates for $f$ and $\partial_{\zeta}f$. First we derive a bound for $\|\mu \|_{L^{\infty}(\mathbb{T})^M}$. Recall that 
$$
\mathcal{Z}_k^{\rho}[\mu'](\tau) = \frac{\partial_{\zeta}f(\zeta_k+\rho \tau)-1}{\rho} \in C^{\alpha}(\mathbb{T}).
$$
Morrey's inequality and \eqref{inversionforZk} lead to the upper bound, 
\begin{equation}\label{lemma65quickuse1}
    [\mu_{k}]_{\alpha; \mathbb{T}} \lesssim  \|\mu_k'\|_{L^{1/(1-\alpha)}(\mathbb{T})} \lesssim \|\mathcal{Z}_{k}^{\rho}\mu'\|_{L^{1/(1-\alpha)}(\mathbb{T})} \leq  C \big(\|\partial_{\zeta} f\|_{L^{\infty}(\partial \mathcal{D}_{\rho})} +1\big),
\end{equation}
where $C>0$ depends on the lower bounds for $\delta,|\rho|$ and the upper bounds for $\rho$. Moreover, since $P_0 \mu_k =0 $, we have that: \begin{equation*}
    \mu_k(\tau) = \frac{1}{2\pi i }\int_{\mathbb{T}} \frac{\mu_k(\tau)}{\sigma} d\sigma = \frac{1}{2\pi i }\int_{\mathbb{T}} \frac{\mu_k(\tau) - \mu_{k}(\sigma)}{\sigma} \, d\sigma.
\end{equation*}
By \eqref{lemma65quickuse1} it follows that,
\begin{equation}\label{analogus721}
    \|\mu \|_{L^{\infty}(\mathbb{T})^M} \lesssim \sum_{k} [\mu_k]_{\alpha; \mathbb{T}} \leq C.
\end{equation}
With the $L^{\infty}$ bound established for $\mu$, we turn our focus to $C^{\alpha}$ bounds for $\mu'$. Again we appeal to \eqref{inversionforZk} to deduce that
\begin{equation*}
    \| \mu ' \|_{C^{\alpha}(\mathbb{T})^M} \leq \sum_{k=1}^{M}\|\mathcal{Z}_k^{\rho} \mu' \|_{C^{\alpha}(\mathbb{T})}  =\frac{1}{\rho} \sum_{k=1}^{M}\| \partial_{\zeta}f(\zeta_k + \rho\, \cdot) -1\|_{C^{\alpha}(\mathbb{T})} \leq C (\|f \|_{C^{1+\alpha}(\overline{\mathcal{D}_{\rho}\cap \Omega_{n_0}})} +1)  ,
\end{equation*}
where $C$ depends on the allowable parameters. Bounding the right hand side of the previous inequality in terms of allowable parameters yields the desired bound for $\|\mu\|_{C^{1 + \alpha}}$. This would directly follow from Lemma \ref{lemma74}, however  the constant bounding $\| f \|_{C^{1+\alpha}(\mathcal{D}_{\rho}\cap \Omega_{n_0})}$ depends on $\|f \|_{C^{1}(\mathcal{D}_{\rho}\cap\Omega_{n_0})}$, which we do not control in terms of the allowable parameters. To overcome this obstacle we make the following two observations. First,
\begin{equation*}
    \|f-\id\|_{ L^{\infty}(\partial\mathcal{D}_{\rho})} \leq\sum_{k=1}^{M} \rho^2 \|\mathcal{Z}_{k}^{\rho}\mu\|_{L^{\infty}(\mathbb{T})} \lesssim C\|\partial_{\zeta}f\|_{L^{\infty}(\partial \mathcal{D}_{\rho})}.
\end{equation*}
Second that the function $f-\id$ is holomorphic, $T$-periodic, and exhibits the limiting behavior
\begin{equation*}
    \lim_{\imagpart{\zeta} \to \pm \infty} |f(\zeta)-\zeta| = \frac{1}{2\pi } \left|  \sum_{k} \widehat{\mu}_{k,-1} \right|, \quad \text{ where, } \quad \mu_k(\tau) = \sum_{m \in \mathbb{Z}} \widehat\mu_{k,m} \tau^m\
\end{equation*}
Thus
\begin{equation}\label{anag7.21}
|\hat{\mu}_{k,-1}|  \lesssim \|\mu\|_{L^{\infty} (\mathbb{T})^M} \lesssim C.
\end{equation}
Applying the maximum modulus theorem to $f-\id$, taking into consideration that the maximum cannot be attained on $\mathpzc{L}_{\,n}\cup\mathpzc{R}_{\,\,n}$ and our previous observations, reveals that for any $n\geq n_0$
\begin{equation*}
    \|f-\id\|_{C^{1}(\mathcal{D}_{\rho}\cap \Omega_{n_0})} \leq \|f-\id\|_{C^{1}(\mathcal{D}_{\rho}\cap \Omega_{n})}\lesssim C,
\end{equation*}
which as discussed earlier concludes the proof for the $C^{1+\alpha}$ bound for $\mu$. 

To bound $\nu$ in $C^{\ell + \alpha}$, we rearrange the terms from the kinematic condition we get
\begin{equation*}
    \realpart (\tau \mathcal{Z}_k^{\rho}\nu')=-\realpart \Big(\tau\Big(\mathcal{V}_k^{\rho}-c\rho \mathcal{Z}_k^{\rho}\mu' \Big)\Big):=g_k.
\end{equation*}
The bounds on $\mu $ ensure that $\|g_k\|_{C^{\alpha}} \leq C $. Lemma~\ref{inversionforZk} once more furnishes the same control for $\|\nu\|_{C^{1+\alpha}}$, whence
\begin{equation*}
   \|\mu\|_{C^{1+\alpha }(\mathbb{T})^M} + \|\nu\|_{C^{1+\alpha} (\mathbb{T})^M} \leq C.
\end{equation*}
A straightforward bootstrapping argument similar to \cite{chen2023desingularization}, gives us the desired $C^{\ell+\alpha} (\mathbb{T})$ control when $\ell \geq 2$. 
\end{proof}

\subsection{Proof of the global desingularization theorem}

Now we have all the tools required to prove the global continuation theorem.

\begin{proof}[Proof of Theorem~\ref{global desingularization theorem}]
Let $\cm$ be the global curve given by Theorem~\ref{prelim global theorem}. Recall, that $\cm$ is locally real analytic and that as $s\to \pm \infty$ one of the alternatives \ref{K blowup alternative}--\ref{K loop} must occur. In particular, the quantity 
\begin{align}\label{analogus722}
N(s) : = \|{\mu(s)}\|_{C^{\ell + \alpha}(\mathbb{T})^M} +  \|\nu(s)\|_{C^{\ell + \alpha}(\mathbb{T})^M} + |Q(s)| + |\lambda(s)| +|\rho(s)|+ \frac{1}{\dist  ((u(s),\rho(s)), \partial\mathcal{O} )}
\end{align}
is finite for all $s\in \mathbb{R}$. Let $\mathscr{C}_+$ denote the portion of $\mathscr{C}$ corresponding to $s> 0$. Without loss of generality we may assume that there exists $s_1 >0$ so that $\rho(s) >0 $ on $(0,s_1]$. Seeking a contradiction, assume that, 
\begin{equation}\label{blowupneeded}
    \sup_{s\geq s_1} (N_{\text{c}}(s) + N_{\text{v}}(s) + |\lambda(s)|) < \infty.
\end{equation}
We will prove that this necessarily excludes all the alternatives \ref{K blowup alternative}--\ref{K loop} of Theorem \ref{prelim global theorem}. Let $\mathcal{D}(s)$ and $f(s)$ denote the conformal domain and map respectively for the parameter value $s$, and $\fluidD(s) \colonequals f(s)(\confD(s))$.

The definition of $\mathcal{O}$ guarantees that $\mathcal{D} (s)$ is a circular domain in that none of the boundary circles intersects one another. Since the curve $\mathscr{C}$ contains the trivial point vortex configuration $(u_0,0)$ it follows from Lemma \ref{complexlemma71} that $\partial_{\zeta} f(s)$ is non vanishing on $\overline{\mathcal{D}(s)}$ and limits to $1$ as $\imagpart \zeta \to \pm\infty$. The maximum modulus principle then implies that, 
\begin{equation*}
    \sup_{s\geq s_1} \sup_{\mathcal{D}(s)} \left(|\partial_{\zeta }f(s)| + \frac{1}{|\partial_{\zeta}f(s)|} \right) \leq \sup_{s\geq s_1} \Nconf(s) <\infty .
\end{equation*}
Thus, distances in the conformal domain $\mathcal{D}(s)$ and the physical domain are uniformly comparable. Furthermore, finiteness of $\Nconf(s)$ and a simple continuity argument ensures that the boundary of $\fluidD(s)$ consists of $M$ Jordan curves per period, and these are uniformly separated in terms of $\Nconf(s)$ and do not enclose one another. Moreover, finiteness of $\Nconf(s)$ in particular implies that $f(s)$ is injective on $\partial \confD(s)$, and so it is globally injective according to Lemma~\ref{complexlemmainjectivity}. These considerations show that there exists  $\delta >0$ be given so that $\mathscr{C}_{+}\subset \mathscr{F}^{-1}(0)\cap \mathcal{O}_{\delta}$.

Recall that one of the circulations, say $\gamma_1$, is fixed along $\mathscr{C}^{+}$. Then, because the kinematic condition requires that the relative velocity field $U(s)$ is purely tangential along the vortex boundaries, 
\begin{align*}
    q_1(s) = \frac{1}{|\Gamma_1(s)|} \left| \int_{\Gamma_1(s)} U(s)\,  dz \right| = \frac{
    |\gamma_1|}{|\Gamma_1(s)|}.
\end{align*}
But,
\[
	|\Gamma_1(s)| = \int_{\Gamma_1(s)}|dz| = \int_{f(\partial_{\rho(s)}(\zeta_k))}|dz| \eqsim_{\|{\partial_{\zeta}f(s)}\|_{\infty}} \rho(s)
\]
and thus
\begin{align}\label{goodeinequality}
    q_1(s) \gtrsim \frac{\gamma_1}{\rho(s)}\Nconf(s).
\end{align}
On the other hand, $q_j(s) \leq \Nvel(s)$ for $j=1, \ldots, M$ simply by definition. Our assumption~\eqref{blowupneeded} that $N_\text{v}(s) < \infty$ together with~\eqref{goodeinequality} therefore provides a uniform lower bound on $|\rho(s )|$ along $\mathscr{C}_+$. Since $|\lambda(s)|$ is bounded, the magnitude of vortex centers $\zeta_k(s)$ is uniformly bounded, which implies a uniform upper bound on $\rho (s )$.

The previous paragraph shows that the last three terms in \eqref{analogus722} are controlled uniformly. Also we can easily see that by definition, 
$$
-\frac{\gamma_k(s)^2}{4\pi^2\rho(s)} < Q_k(s) < \rho(s)q_k(s)^2 < \rho(s) \Nvel(s)^2.
$$
Both the quantities on the far left and far right are uniformly controlled, thus $|Q(s)|$ is also bounded from above. To finally eliminate \ref{K blowup alternative} we must ensure finiteness of the first two terms in \eqref{analogus722}. Notice that boundedness of each $|q_k|$ follows from finiteness of $\Nvel(s)$. Then, the lower bound on $\rho(s)$ allows us to apply Lemma~\ref{uniformboundsformunu}, which provides uniform bounds on $\mu(s)$ and $\nu(s)$ in $C^{\ell+\alpha}$. In total, then, we have ruled out the blowup alternative~\ref{K blowup alternative}.

Notice that Lemmas~\ref{SchauderLemma} and \ref{uniformboundsformunu} directly exclude the loss of Fredholmness~\ref{K loss of fredholmness alternative} and loss of compactness alternatives \ref{K loss of compactness alternative}, respectively; their applicability follows from the bounds we obtained on the various parameters based on the assumption that \eqref{blowupneeded} holds. Finally, suppose that the closed loop alternative~\ref{K loop} occurs. But  this would mean that $\rho(P) =0 $ for some $P >0 $, which contradict the uniform lower bound on $\rho(s)$ we obtained above. Thus, the assumption in~\eqref{blowupneeded} cannot hold, which concludes the proof of the theorem.
\end{proof}

\section{Applications}\label{applications}

We next consider the three specific periodic vortex configurations discussed in the introduction: a von Kármán vortex street, a translating vortex array, and a translating $2P$ configuration. In fact, all of these are non-degenerate so one could apply Theorem~\ref{global desingularization theorem} directly. However, by taking advantage of additional symmetries, it is possible to fix more of the parameters along the global curve.

\subsection{Wave speed bound}
In this section, we establish a bound on the wave speed for translating periodic configurations. This result is of independent interest, but in particular allows us to exclude blowup of the parameters along the families of von Kármán hollow vortex streets and hollow vortex arrays.  

\begin{proposition}[Wave speed bound]\label{wavespeedProp}
For any translating (or stationary) periodic hollow vortex configuration the wave speed $c$, and the circulations $\gamma_k$, satisfy the following bounds
\begin{align}
    \label{circulationwavespeedbounds} |\gamma_k| \leq \rho  N_{\textup{c}}N_{\textup{v}}, \quad |c| \leq C  N_{\textup{v}} (1 +N_{\textup{c}}^2 )
\end{align}
where the constants $C$ depends on $M$, lower bounds of $\rho$ and upper bounds on $T$.
\end{proposition}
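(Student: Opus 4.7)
The plan for the circulation bound is a direct boundary computation. Combining the kinematic condition (which says the relative velocity $U = \mathsf{u}-c$ is tangent to $\Gamma_k$) with the dynamic condition $|U|=q_k$ gives $U = q_k\boldsymbol{\tau}$ on $\Gamma_k$, where $\boldsymbol{\tau}$ denotes the unit tangent. Since $\oint_{\Gamma_k}\boldsymbol{\tau}\,ds = 0$, the circulation reduces to $\gamma_k = \oint_{\Gamma_k}\mathsf{u}\cdot\boldsymbol{\tau}\,ds = q_k L_k$. From the definition of $N_{\textup{v}}$ we have $q_k \leq N_{\textup{v}}$, and from the definition of $N_{\textup{c}}$, $L_k = \oint_{\partial B_\rho(\zeta_k)} |\partial_\zeta f|\rho\,d\theta \leq 2\pi\rho N_{\textup{c}}$, giving $|\gamma_k| \lesssim \rho N_{\textup{c}} N_{\textup{v}}$.

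For the wave-speed bound, I would first derive an explicit mean-value identity for $c$. Integrating $\mathsf{u} = c + U$ along $\Gamma_k$ with respect to arclength and using that $\int_{\Gamma_k} U\,ds = q_k\int\boldsymbol{\tau}\,ds = 0$ yields
\[
c = \frac{1}{L_k}\int_{\Gamma_k}\mathsf{u}\,ds = \frac{\rho}{L_k}\int_0^{2\pi}\frac{\partial_\zeta w(\zeta_k+\rho e^{i\theta})}{\partial_\zeta f(\zeta_k+\rho e^{i\theta})}|\partial_\zeta f|\,d\theta.
\]
Using the max-modulus bound $|1/\partial_\zeta f|\leq N_{\textup{c}}$ on $\overline{\mathcal{D}_\rho}$ (which follows because $\partial_\zeta f\to 1$ at $\pm i\infty$ by Proposition~\ref{proof of communication with tau derivative} and $|\partial_\zeta f|\geq 1/N_{\textup{c}}$ on the boundary by the conformality definition), this in turn gives $L_k \geq 2\pi\rho/N_{\textup{c}}$ and
\[
|c| \leq N_{\textup{c}}\sup_{\partial B_\rho(\zeta_k)}|\partial_\zeta w|.
\]

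To control $\sup_{\partial\mathcal{D}_\rho}|\partial_\zeta w|$, my plan is to apply the Phragm\'en--Lindel\"of principle in the periodic strip to $\partial_\zeta w$, which is holomorphic in $\mathcal{D}_\rho$ with explicit limits $\mp\Gamma/(2T)$ at $\pm i\infty$. This yields $\sup_{\mathcal{D}_\rho}|\partial_\zeta w| \leq \max\bigl(\sup_{\partial\mathcal{D}_\rho}|\partial_\zeta w|,|\Gamma|/(2T)\bigr)$, where the infinity term is already $O(N_{\textup{c}}N_{\textup{v}})$ by the circulation bound (here an upper bound on $T$ and dependence on $M$ enters). For the boundary contribution I would exploit the decomposition $\partial_\zeta w = \partial_\zeta w_0 + \mathcal{Z}^\rho\nu'$ from the ansatz~\eqref{ansatzforw}. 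The explicit point-vortex velocity $\partial_\zeta w_0 = \sum_j(\gamma_j/2iT)\cot(\pi(\zeta-\zeta_j)/T)$ has dominant singular term of magnitude $|\gamma_k|/(2\pi\rho)\lesssim N_{\textup{c}}N_{\textup{v}}$ on $\partial B_\rho(\zeta_k)$ with the other vortex contributions bounded by $C|\gamma_j|/T \lesssim \rho N_{\textup{c}}N_{\textup{v}}/T$; and the perturbation $\mathcal{Z}^\rho\nu'$ vanishes at $\pm i\infty$ (Proposition~\ref{proof of communication with tau derivative}\ref{Zrho is bounded}), so max modulus gives $\sup_{\mathcal{D}_\rho}|\mathcal{Z}^\rho\nu'| = \sup_{\partial\mathcal{D}_\rho}|\mathcal{Z}^\rho\nu'|$.

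The hard part will be to close the argument without circularity, since the na\"ive boundary bound $|\partial_\zeta w|= |\partial_\zeta f||U+c|\leq N_{\textup{c}}(N_{\textup{v}}+|c|)$ already contains $|c|$. The right way to eliminate this is a case analysis according to where $\sup_{\mathcal{D}_\rho}|\partial_\zeta w|$ is attained. If it is attained at $\pm i\infty$, the limit value $|\Gamma|/(2T) \lesssim N_{\textup{c}}N_{\textup{v}}$ gives $|c|\lesssim N_{\textup{c}}^2 N_{\textup{v}}$ directly, accounting for the $N_{\textup{c}}^2$ factor in the final bound. If instead $\sup|\partial_\zeta w|$ is attained on $\partial\mathcal{D}_\rho$, one compares $|U_\pm|=|u_\pm-c|$ (which are the limits of $U$ at $\pm i\infty$) to $N_{\textup{v}}$: in the subcase $|U_\pm|\leq N_{\textup{v}}$, max modulus for $U$ on $\mathcal{D}_\rho$ yields $|c|\leq |U_\pm|+|u_\pm|\lesssim N_{\textup{v}}+N_{\textup{c}}N_{\textup{v}}$, while in the subcase $|U_\pm|>N_{\textup{v}}$ one uses the relation $U_+ - U_- = -\Gamma/T$ together with the explicit bound on $|\partial_\zeta w_0|$ to improve the bound on $\sup_{\mathcal{D}_\rho}|\mathcal{Z}^\rho\nu'|$. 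Either way the argument closes to give $|c|\leq CN_{\textup{v}}(1+N_{\textup{c}}^2)$ with $C$ depending only on $M$, a lower bound for $\rho$, and an upper bound for $T$.
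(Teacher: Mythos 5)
Your circulation bound is essentially the paper's own argument (the relative velocity is tangential of modulus $q_k\le N_{\textup{v}}$, the constant $c$ integrates to zero around a closed curve, and $|\Gamma_k|\lesssim \rho N_{\textup{c}}$), so there is nothing to say there. The wave-speed bound, however, takes a genuinely different route. The paper integrates $c\cot(\tfrac{\pi}{T}(\zeta-\zeta_1))$ over the boundary of a tall period rectangle: the residue at $\zeta_1$ produces $2Tic$ exactly, and after substituting $c=w_\zeta/f_\zeta-U$, Cauchy's theorem, cancellation of the lateral sides by periodicity, and the limits of $w_\zeta$, $f_\zeta$, $\cot$ as $\imagpart\zeta\to\pm\infty$ reduce everything to an integral of $U$ over $\partial\confD_\rho$ (of size $\rho N_{\textup{v}}\|\cot\|_{L^\infty(\partial\confD_\rho)}$) plus a term of size $N_{\textup{c}}\max_k|\gamma_k|$, which is closed with the circulation bound. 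You instead use a mean-value identity on a single vortex boundary plus maximum-modulus considerations at $\pm i\infty$. Both mechanisms work; the paper's kernel trick isolates $c$ in one stroke, while yours must separately decouple $\sup|\partial_\zeta w|$ from $c$ --- which is exactly where your write-up gets complicated.

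On that closing step: the trichotomy you propose is both more elaborate than necessary and not fully watertight, because the subcase $|U_\pm|>N_{\textup{v}}$ is handled only by a vague appeal to $U_+-U_-=-\tfrac{1}{T}\sum_j\gamma_j$ ``improving the bound on $\sup|\mathcal{Z}^\rho\nu'|$,'' which I do not see how to carry out. Fortunately that subcase is empty, and once you observe this the whole case analysis collapses. Indeed $U$ is holomorphic and $T$-periodic on $\confD_\rho$ with finite limits $U_\pm=\mp\tfrac{1}{2T}\sum_j\gamma_j-c$ at $\pm i\infty$, so under $\zeta\mapsto e^{2\pi i\zeta/T}$ it extends holomorphically to the sphere minus the images of the $M$ excluded disks; the maximum principle (this is precisely the mechanism of Lemma~\ref{boundsforU}) then gives $|U_\pm|\le\sup_{\partial\confD_\rho}|U|\le N_{\textup{v}}$ unconditionally. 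Your subcase (ii-a) therefore always applies and finishes the proof in two lines: $|c|\le |U_+|+\tfrac{1}{2T}|\sum_j\gamma_j|\le N_{\textup{v}}+\tfrac{M}{2T}\rho N_{\textup{c}}N_{\textup{v}}\le N_{\textup{v}}(1+\tfrac{M}{4}N_{\textup{c}})$, using your own circulation bound and $2|\rho|<|T|$ from~\eqref{defofmathcalU}; this is even slightly stronger than~\eqref{circulationwavespeedbounds}. The entire discussion of where $\sup_{\confD_\rho}|\partial_\zeta w|$ is attained, of the decomposition $\partial_\zeta w=\partial_\zeta w_0+\mathcal{Z}^\rho\nu'$, and of Phragm\'en--Lindel\"of can be deleted.
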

\begin{proof}
Recall that $U = w_{\zeta}/f_{\zeta} - c $. According to~\eqref{circulationconditionintro}, the circulation about the $k$-th vortex core is given by
\[
	2\pi i \gamma_k = \int_{\Gamma_k} \partial_{z} \mathsf{w}\, dz = \int_{\Gamma_k}U \circ f^{-1}\, dz + \int_{\Gamma_k}c\, dz= \int_{\Gamma_k}U \circ f^{-1}\, dz.
\]
Thus, 
\begin{equation}\label{circulationbound}
    |\gamma_k| \leq\frac{1}{2\pi} |\Gamma_k|N_{\text{v}}\leq \rho N_{\text{v}}N_{\text{c}},
\end{equation}
which is the circulation bound of~\eqref{circulationwavespeedbounds}. Now we proceed with the wave speed bound. In view of Remark~\ref{Inegrationstrip}, suppose that the balls $B_{\rho}(\zeta_k)$ are compactly contained in the strip of horizontal length $T$. Let $n$ be large enough such that
\[
\bigcup_{j=1}^{M}B_{\rho}(\zeta_k) \subset \Omega_n \colonequals \{x+yi \text{ : } 0< x <T,\, |y|\leq n \}.
\]
Then we have 
\be
\label{first wave speed bound}
\begin{aligned}
   2T  i c & = \int_{\partial \Omega_n}c\cot{\left( \frac{\pi}{T}(\zeta-\zeta_1)\right)}\, d\zeta = \int_{\partial \Omega_n}\left(U - \frac{\partial_\zeta w}{\partial_\zeta f}\right)\cot{\left( \frac{\pi}{T}(\zeta-\zeta_1)\right)}\, d\zeta\\
    &= -\int_{\partial \confD_\rho} U\cot{\left( \frac{\pi}{T}(\zeta-\zeta_1)\right)}\, d\zeta  - \int_{\mathpzc{T}_n\cup \mathpzc{B}_n}  \frac{\partial_\zeta w}{\partial_\zeta f} \cot{\left( \frac{\pi}{T}(\zeta-\zeta_1)\right)}\, d\zeta,
 \end{aligned}
\ee
where $\mathpzc{T}_n$ and $\mathpzc{B}_n$ are the top and bottom boundary of $\Omega_n$, respectively. Recall that 
\[
	\partial_\zeta w \longrightarrow \mp \frac{1}{2\pi} \sum_{j}\gamma_j, \quad \cot(\zeta-\zeta_1)\longrightarrow \mp i \qquad \textrm{as } \imagpart\zeta \to \infty.
\]
Taking absolute values in~\eqref{first wave speed bound} and sending $n$ to infinity, we arrive at the bound 
\begin{align}\label{quickusepropositionbounds}
    |c| \leq \frac{M}{|T|}\left( \rho N_{\text{v}}\left\|\cot{\left( \frac{\pi}{T}(\placeholder-\zeta_1)\right)}\right\|_{L^{\infty}(\partial \mathcal{D}_{\rho})} + N_{\text{c}}\max_{k} |\gamma_k|\right).
\end{align}
On the other hand, the definition of $\mathcal{U}$ \eqref{defofmathcalU} ensures that $|T|>2\rho$ and $\min_{j\neq k}\{|\zeta_j - \zeta_k| \}>2\rho$. This combined with the singular behavior of cotangent \eqref{cotangentseries} furnishes the estimate 
\[
\left\|\cot{\left( \frac{\pi}{T}(\placeholder-\zeta_1)\right)}\right\|_{L^{\infty}(\partial \mathcal{D}_{\rho})} \lesssim M\max\left\{ \frac{1}{\rho} , |T|\right\}
\]
where the implied constant is universal. This combined with \eqref{quickusepropositionbounds} and \eqref{circulationbound} yields the claimed bound in~\eqref{circulationwavespeedbounds}.
\end{proof}

\subsection{Symmetries and their consequences} 

We will call a set $E \subset \mathbb{C}$ \textit{odd} if $-E = E$ and \textit{imaginary symmetric} if it is invariant under even reflection with respect to the imaginary axis and \emph{real symmetric} if is invariant under even reflection over the real axis. Finally, $E$ is said to be \emph{symmetric} if it is both real and imaginary symmetric. Observe that if $E$ is real symmetric and $E$ is odd, then $E$ is symmetric. We call a function $f$ defined on a real symmetric domain $E$ that satisfies $f=f^*$ \textit{real on real}, where $f^* \colonequals \overline{f(\overline{\placeholder})}$ is the Schwarz conjugate of $f$.  

In order to construct solutions of the hollow vortex problem exhibiting these types of symmetries requires restricting the domain and codomain of $\F$ to certain subspaces. In particular, we define
\begin{equation}\label{subspaces}
    \begin{aligned}
       & C_{\textup{ii}}^{\ell+\alpha}(\mathbb{T})\colonequals \{ \varphi\in C^{\ell+\alpha} (\mathbb{T}) : i^m\widehat{\varphi}_m \in i\mathbb{R} \} \qquad C_{\textup{ri}}^{\ell+\alpha}(\mathbb{T}) \colonequals \{\varphi \in C^{\ell+\alpha}(\mathbb{T}) : i^m\widehat{\varphi}_{m} \in \mathbb{R} \}.
    \end{aligned}
\end{equation}
Their relevance will become clear momentarily. 

First consider the special case where there are evenly many vortices on the fundamental strip, with each vortex uniquely paired to another by odd reflection. For example, this occurs for the staggered von Kármán vortex street. 

\begin{lemma}\label{symmetrylemma2}
Suppose that $M=2m$ for some $m \geq 1$, and that 
\begin{equation}\label{symmetrylemma2centers}
    \zeta_{m+k}=-\zeta_k \quad \textrm{and} \quad  \gamma_{m+k}=-\gamma_k \qquad \text{ for } k=1,\ldots, m.
\end{equation}
If $|\rho| > 0$ and 
\begin{equation}\label{symmetrylemma2conditions}
    \mu_{m+k} =-\mu_{k}(-\placeholder), \quad \nu_{m+k}=-\nu_{k}(-\placeholder), \quad Q_{m+k} = Q_k \qquad  \text{ for all }  k = 1, \ldots, m,
\end{equation}
then $f$ and $w$ are odd, $\mathscr{D} $ is odd, and for all $k = 1, \ldots, m$,
\[
	\left\{
	\begin{aligned}
		\A_{m+k}(\mu,\nu,Q,\lambda; \rho) & = 0 \\
		\B_{m+k}(\mu,\nu,Q,\lambda; \rho) & = 0 
	\end{aligned}
	\right.
	\qquad
		 \textrm{if and only if} 
	\qquad
	\left\{
	\begin{aligned}
		\A_k(\mu,\nu,Q,\lambda; \rho) & = 0 \\
		\B_k(\mu,\nu,Q,\lambda; \rho) & = 0. 
	\end{aligned}
	\right.
\]
\end{lemma}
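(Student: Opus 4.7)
The plan is to propagate the prescribed symmetries of $(\mu,\nu,Q)$ first to the conformal map $f$, potential $w$, and fluid domain $\fluidD$, and then carry them down to the nonlocal operators $\mathscr{A}$ and $\mathscr{B}$.

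First I would show that $\mathcal{Z}^{\rho}[\mu]$ is odd in $\zeta$ under the hypotheses. Splitting the sum~\eqref{definition for Zrho} into pairs $\{k,m+k\}$ and using $\zeta_{m+k} = -\zeta_k$ together with $\mu_{m+k}(\sigma) = -\mu_k(-\sigma)$, the $(m+k)$-th integrand becomes $-\mu_k(-\sigma)\cot\tfrac{\pi}{T}(\rho\sigma - \zeta_k - \zeta)$. Substituting $\sigma \mapsto -\sigma$ (which reverses the sign of the complex line element $d\sigma$ and of the cotangent argument), each pair collapses to
\begin{equation*}
\mu_k(\sigma)\left[\cot\tfrac{\pi}{T}(\rho\sigma+\zeta_k - \zeta) - \cot\tfrac{\pi}{T}(\rho\sigma+\zeta_k + \zeta)\right],
\end{equation*}
which is manifestly odd in $\zeta$. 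The same computation gives oddness of $\mathcal{Z}^\rho[\nu]$. For the point-vortex potential $w_0$, an analogous pairing together with $\gamma_{m+k} = -\gamma_k$ and $\sin(-x) = -\sin(x)$ yields $w_0(-\zeta) = -w_0(\zeta) + \tfrac{1}{2}\sum_k \gamma_k$, and the constant term vanishes since $\sum_k \gamma_k = 0$ is forced by the circulation symmetry. Hence $f = \id + \rho^2 \mathcal{Z}^\rho\mu$ and $w = w_0 + \rho \mathcal{Z}^\rho \nu$ are both odd, and because $\confD_\rho$ is odd under the center hypothesis, so is $\fluidD = f(\confD_\rho)$.

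Differentiating, $\partial_\zeta f$ and $\partial_\zeta w$ are even in $\zeta$, and hence so is the relative velocity $U = \partial_\zeta w/\partial_\zeta f - c$. For the Bernoulli equation this at once gives
\begin{equation*}
|U(\zeta_{m+k} + \rho\tau)|^2 = |U(-\zeta_k + \rho\tau)|^2 = |U(\zeta_k - \rho\tau)|^2;
\end{equation*}
combining with $\gamma_{m+k}^2 = \gamma_k^2$ and $Q_{m+k} = Q_k$, the definitions in~\eqref{definition of calB_k} and \eqref{definitionsforscrA,B} then give $\mathscr{B}_{m+k}(u;\rho)(\tau) = \mathscr{B}_k(u;\rho)(-\tau)$, so $\mathscr{B}_{m+k} \equiv 0$ if and only if $\mathscr{B}_k \equiv 0$. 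For the kinematic equation I would similarly verify $\mathcal{V}_{m+k}^\rho(-\tau) = \mathcal{V}_k^\rho(\tau)$ by expanding the cotangent sum in the definition of $\mathcal{V}_k^\rho$, pairing indices $j$ with $m+j$ for $j \in \{1,\ldots,m\}\setminus\{k\}$, isolating the distinguished self-term (index $j = m+k$ on one side and $j = k$ on the other), and applying $\gamma_{m+j} = -\gamma_j$ and $\zeta_{m+j} = -\zeta_j$. Together with evenness of $\mathcal{Z}^\rho[\mu']$ and $\mathcal{Z}^\rho[\nu']$ (as derivatives of odd functions), which yields $\mathcal{Z}_{m+k}^\rho[\mu'](-\tau) = \mathcal{Z}_k^\rho[\mu'](\tau)$ and analogously for $\nu'$, this assembles into $\mathscr{A}_{m+k}(u;\rho)(-\tau) = -\mathscr{A}_k(u;\rho)(\tau)$, completing the equivalence.

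The main bookkeeping obstacle I anticipate is the identity $\mathcal{V}_{m+k}^\rho(-\tau) = \mathcal{V}_k^\rho(\tau)$: the self-interaction terms and the off-diagonal paired terms on the two sides must line up exactly once $\gamma_{m+j} = -\gamma_j$ and $\zeta_{m+j} = -\zeta_j$ are applied. Everything else is a transparent consequence of the oddness of $f$ and $w$.
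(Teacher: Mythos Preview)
Your proposal is correct and follows essentially the same approach as the paper: both arguments first establish that $f$ and $w$ are odd by pairing the $k$-th and $(m+k)$-th summands in $\mathcal{Z}^\rho$ and in $w^0$, and then deduce the equivalence of the boundary equations from the evenness of $\partial_\zeta f$, $\partial_\zeta w$, and $U$.

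The only notable difference is in the second step. The paper appeals directly to the \emph{local} kinematic and Bernoulli conditions~\eqref{Kinematiconditionlocal}, \eqref{bernoulliconditionlocal1}: once $w_\zeta - c f_\zeta$ and $|U|$ are even and $\zeta_{m+k}=-\zeta_k$, the boundary condition on the $(m+k)$-th circle at $\tau$ is exactly that on the $k$-th circle at $-\tau$. You instead work through the nonlocal expressions~\eqref{definitionsforscrA,B}, which forces you to check $\mathcal{V}_{m+k}^\rho(-\tau)=\mathcal{V}_k^\rho(\tau)$ and the corresponding identity for $\mathcal{Z}_k^\rho$. This is fine, but the ``bookkeeping obstacle'' you anticipate is not really one: since $\mathcal{V}_k^\rho(\tau)=\partial_\zeta w^0(\zeta_k+\rho\tau)-\gamma_k/(2\pi i\rho\tau)-c$ and you have already shown $w^0$ is odd (so $\partial_\zeta w^0$ is even), the identity drops out immediately from $\gamma_{m+k}=-\gamma_k$ without any separate index-pairing argument. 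The local-formulation route in the paper is a bit cleaner for this reason. A minor side remark: the additive constant you track in $w_0(-\zeta)=-w_0(\zeta)+\tfrac12\sum_k\gamma_k$ is irrelevant, since only $\partial_\zeta w$ enters the problem.
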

\begin{proof}

Notice that if both $f$ and $w$ are odd and the (local) kinematic~\eqref{Kinematiconditionlocal} and (local) Bernoulli conditions~\eqref{bernoulliconditionlocal1} are satisfied on the boundary of the $k$-th vortex, then they are also satisfied on the boundary of the $(m+k)$-th vortex since $\zeta_{m+k}=-\zeta_k$. So we will prove that if \eqref{symmetrylemma2conditions} holds, then $f$ and $w$ are odd. 

Let us begin with $w$. For $w^{0}$ we have: 
\begin{align*}
    w^{0}(\zeta)
    &=\sum_{k=1}^{m}\frac{\gamma_k}{2iT}\log{\left(\frac{\displaystyle \sin{\left(\frac{\pi}{T}(\zeta-\zeta_k)\right)}}{\displaystyle \sin{\left(\frac{\pi}{T}(\zeta+\zeta_k)\right)}}\right)}.
\end{align*}
Thus, $w^0 = - w^0(-\placeholder)$. On the other hand, 
\begin{align*}
     \mathcal{Z}^{\rho}[\nu](\zeta) 
    &=\sum_{k=1}^{m}\frac{1}{2Ti}\int_{\mathbb{T}}\nu_k(\sigma)\cot{\left(\frac{\pi}{T}(\rho\sigma -\zeta_k-\zeta) \right)}\rho \,d\sigma  -  \sum_{k=1}^{m}\frac{1}{2Ti}\int_{\mathbb{T}}\nu_{k}(-\sigma)\cot{\left(\frac{\pi}{T}(\rho\sigma +\zeta_{k}-\zeta) \right)}\rho \,d\sigma\\
    &=:\sum_{k=1}^{m}I_k(\zeta)+\sum_{k=1}^{m} J_{k}(\zeta).
\end{align*}
A straightforward computation reveals that $ I_k(-\zeta) = -J_k(\zeta)$, and hence $w$ is odd in $\zeta$. The argument for $f$ is identical, so we omit it.\end{proof}

A similar result holds for the case where there are evenly many vortices, with the paired vortices being even reflections of each over the real axis. The proof is quite similar to the previous lemma, and is therefore omitted.

\begin{lemma}\label{symmetrylemma3}
Suppose that $M=2m$ for some $n\geq 1$, and that  
\begin{equation}\label{conjugateconidition}
    \zeta_{m+k} = \overline{\zeta_k}, \text{ and }\quad  \gamma_{m+k} = -\gamma_k,\quad \text{for } k=1,\ldots , m.
\end{equation}
If $|\rho| > 0$, $c\in \mathbb{R}$, and 
\begin{equation}\label{conjugateconditionsonmunu}
  \mu_{m+k} = \mu_k^*\,,\quad \nu_{m+k} = \nu_k^*,\quad Q_{m+k} = Q_k \quad \text{for all }\, k=1,\ldots, M
\end{equation}
then $f,w$ are real on real, $\overline{\mathscr{D}}=\mathscr{D}$ and for all $k=1,\ldots , M$,
\[
	\left\{
	\begin{aligned}
		\A_{m+k}(\mu,\nu,Q,\lambda; \rho) & = 0 \\
		\B_{m+k}(\mu,\nu,Q,\lambda; \rho) & = 0 
	\end{aligned}
	\right.
	\qquad
		 \textrm{if and only if} 
	\qquad
	\left\{
	\begin{aligned}
		\A_k(\mu,\nu,Q,\lambda; \rho) & = 0 \\
		\B_k(\mu,\nu,Q,\lambda; \rho) & = 0. 
	\end{aligned}
	\right.
\]
\end{lemma}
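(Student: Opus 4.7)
The argument mirrors the proof of Lemma~\ref{symmetrylemma2}, with odd reflection through the origin replaced by Schwarz conjugation (reflection over the real axis). My plan is to establish that under the hypotheses both $f$ and $w$ are real on real; once this is in hand, injectivity of $f$ yields $\overline{\fluidD} = \fluidD$ with $\Gamma_{m+k} = \overline{\Gamma_k}$, and the equivalence of the $k$-th and $(m+k)$-th boundary conditions then follows because the kinematic and dynamic conditions are intrinsic geometric statements and the assumption $c \in \mathbb{R}$ makes the co-moving frame conjugation-invariant.

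First I would check $(w^0)^* = w^0$. Since $\gamma_k, T \in \mathbb{R}$, Schwarz conjugation sends the $k$-th summand of $w^0$ to a log-sine term centered at $\bar\zeta_k$ with coefficient $-\gamma_k$; the pairing \eqref{conjugateconidition} then re-indexes the sum back to $w^0$. The analogous statement for the layer potentials is verified by parameterizing $\sigma = e^{i\theta}$ in the defining integral \eqref{definition for Zrho}, conjugating both the parameter $\zeta$ and the integrand (using that $\mu_k, \nu_k$ are real-valued, that $T,\rho, \gamma_j$ are real, and that $\overline{\cot w} = \cot\bar w$), and applying the substitution $\theta \mapsto -\theta$ to reverse orientation on $\mathbb{T}$. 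The resulting sign from conjugating $1/(2Ti)$ combines with the orientation reversal to yield the identity
\[
  (\mathcal{Z}^\rho[\nu])^*(\zeta) = \sum_{k=1}^{M} \frac{1}{2Ti} \int_{\mathbb{T}} \nu_k(\bar\sigma)\cot\!\Big(\tfrac{\pi}{T}(\rho\sigma + \bar\zeta_k - \zeta)\Big)\rho\,d\sigma.
\]
Invoking $\nu_{m+k} = \nu_k^*$, which for real-valued $\nu_k$ means $\nu_{m+k}(\sigma) = \nu_k(\bar\sigma)$, together with $\bar\zeta_k = \zeta_{m+k}$, lets me re-index and recover $\mathcal{Z}^\rho[\nu](\zeta)$. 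The same computation applies to $\mathcal{Z}^\rho[\mu]$, so $f = f^*$ and $w = w^*$.

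For the equivalence of the boundary-condition components, I would evaluate the nonlocal operators at the $(m+k)$-th vortex. On $\partial B_\rho(\zeta_{m+k})$ parameterized by $\zeta = \bar\zeta_k + \rho\tau$, the real-on-real property of $f, w$ together with $\bar c = c$ gives $f_\zeta(\bar\zeta_k+\rho\tau) = \overline{f_\zeta(\zeta_k + \rho\bar\tau)}$ and similarly for $w_\zeta$. Substituting into \eqref{definitionsforscrA,B} and using $\gamma_{m+k} = -\gamma_k$ to absorb the pole term in $\mathcal{V}^\rho_{m+k}$ (exactly as in the computation of $\mathcal{V}_{m+k}^\rho(\tau) = \overline{\mathcal{V}_k^\rho(\bar\tau)}$), one obtains $\A_{m+k}(u;\rho)(\tau) = \A_k(u;\rho)(\bar\tau)$ and $\B_{m+k}(u;\rho)(\tau) = \B_k(u;\rho)(\bar\tau)$. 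Since $\tau \mapsto \bar\tau$ is a bijection of $\mathbb{T}$, vanishing of one member of each pair is equivalent to vanishing of the other.

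The main technical obstacle is the bookkeeping in the layer-potential identity: several minus signs appear from conjugating the prefactor $1/(2Ti)$, from the orientation reversal $\theta\mapsto -\theta$, and from the relation $d\sigma = i\sigma\,d\theta$, and one has to verify these conspire to yield $(\mathcal{Z}^\rho)^* = \mathcal{Z}^\rho$ rather than a sign flip. Once that calculation is pinned down, the rest of the proof is a direct adaptation of the template established in Lemma~\ref{symmetrylemma2}.
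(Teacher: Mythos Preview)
Your proposal is correct and follows precisely the approach the paper intends: the paper omits the proof of this lemma entirely, stating only that it ``is quite similar to the previous lemma,'' and your adaptation---replacing odd reflection by Schwarz conjugation, verifying $(w^0)^*=w^0$ and $(\mathcal{Z}^\rho[\nu])^*=\mathcal{Z}^\rho[\nu]$ via the pairing~\eqref{conjugateconidition} and the density relations~\eqref{conjugateconditionsonmunu}, then reading off $\A_{m+k}(\tau)=\A_k(\bar\tau)$ and $\B_{m+k}(\tau)=\B_k(\bar\tau)$ from the real-on-real property of $f,w$ with $c\in\mathbb{R}$---is exactly that adaptation. One small remark: you do not need injectivity of $f$ to conclude $\overline{\fluidD}=\fluidD$; it follows directly from $f^*=f$ together with the conjugation symmetry of $\confD_\rho$ (the centers come in conjugate pairs and $\rho\in\mathbb{R}$).
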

\begin{remark}\label{symmetryremark?}
Consider the special situation that \eqref{conjugateconidition} holds (for $c\in \mathbb{R}$) and all the vortex centers lie on the imaginary axis. It is immediate that $\zeta_{m+k}=\overline{\zeta_k} = -\zeta_k$, thus \eqref{symmetrylemma2centers} holds as well. Assume now that both ~\eqref{symmetrylemma2conditions} and~\eqref{conjugateconditionsonmunu} are true, which amounts to requiring that 
\[
\mu_{m+k}(\tau) = -\mu_{k}(-\tau) = \overline{\mu_k(\overline{\tau})}, \quad \nu_{m+k}(\tau) = -\nu_{k}(-\tau) = \overline{\nu_k(\overline{\tau})}\quad \text{ for all } k=1,\ldots ,m,
\]
or equivalently 
\begin{equation}\label{imagimagdef}
    \mu_{k},\nu_k \in C_{\textup{ii}}^{\ell+\alpha}(\mathbb{T})
\end{equation}
In this case, according to Lemmas~\ref{symmetrylemma2} and \ref{symmetrylemma3}, we have that $f^* = f = -f(-\placeholder)$ and $w^*=w = -w(-\placeholder)$. 
\end{remark}
The preceding remark has the following, rather important, consequence. 
\begin{lemma}\label{symmetrylemma4}
Suppose that $\zeta_k \in i\mathbb{R}$ for all $k=1,\ldots,M$, the hypotheses of Lemma~\ref{symmetrylemma3} hold and $\mu,\nu\in C_{\textup{ii}}^{\ell+\alpha}(\mathbb{T})^M $. Then $\mathscr{D}$ is symmetric, and
\be
	\label{symmetries of A and B array}
\mathscr{A}_{k}^* (\tau ) = -\mathscr{A}_{k}(-\overline{\tau}), \quad \mathscr{B}_k^*(\tau)=\mathscr{B}(-\tau),\quad \text{for all } k=1,\ldots ,M. 
\ee
Accordingly, $\range{\mathscr{F}_k} \subset C_{\textup{ii}}^{\ell-1 + \alpha} (\mathbb{T}) \times C_{\textup{ri}}^{\ell-1+\alpha}(\mathbb{T})$. 
\end{lemma}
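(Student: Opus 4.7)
The approach is to combine the two single-reflection symmetries of Lemmas~\ref{symmetrylemma2} and \ref{symmetrylemma3}, which the standing hypotheses make simultaneously available via Remark~\ref{symmetryremark?}. First I will verify that the premises of both lemmas on the densities are in force: with $\zeta_k \in i\mathbb{R}$ one has $\overline{\zeta_k} = -\zeta_k$, so the condition $\zeta_{m+k} = \overline{\zeta_k}$ from Lemma~\ref{symmetrylemma3} coincides with $\zeta_{m+k} = -\zeta_k$ from Lemma~\ref{symmetrylemma2}; meanwhile, $\mu_k \in C^{\ell+\alpha}_{\textup{ii}}(\mathbb{T})$ is exactly the identity $\mu_k^* = -\mu_k(-\placeholder)$, which reconciles \eqref{symmetrylemma2conditions} with \eqref{conjugateconditionsonmunu}, and likewise for $\nu$. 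Applying both lemmas thus gives $f^* = f = -f(-\placeholder)$ and $w^* = w = -w(-\placeholder)$, which combine into the single symmetries $f(-\overline{\zeta}) = -\overline{f(\zeta)}$ and $w(-\overline{\zeta}) = -\overline{w(\zeta)}$. Because $\confD_\rho$ is invariant under both $\zeta \mapsto -\zeta$ and $\zeta \mapsto \overline{\zeta}$, this immediately gives that $\mathscr{D} = f(\confD_\rho)$ is symmetric.

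Next I extract the induced identity for the relative velocity. Applying $\partial_{\overline{\zeta}}$ to the combined symmetry and using holomorphicity of $f,w$ yields
\[
\partial_\zeta f(-\overline{\zeta}) = \overline{\partial_\zeta f(\zeta)}, \qquad \partial_\zeta w(-\overline{\zeta}) = \overline{\partial_\zeta w(\zeta)}.
\]
The assumption $\zeta_k \in i\mathbb{R}$ supplies the key point-evaluation identity
\[
\zeta_k + \rho(-\overline{\tau}) = -\overline{\zeta_k + \rho \tau},
\]
which together with $c \in \mathbb{R}$ produces
\[
(\partial_\zeta w - c\,\partial_\zeta f)(\zeta_k + \rho(-\overline{\tau})) = \overline{(\partial_\zeta w - c\,\partial_\zeta f)(\zeta_k + \rho\tau)},
\]
and in particular $|U|^2$ is invariant under $\tau \mapsto -\overline{\tau}$.

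The symmetries of $\mathscr{A}_k$ and $\mathscr{B}_k$ now fall out of their definitions \eqref{definitionsforscrA,B}. Unwinding the layer-potential expressions for $\partial_\zeta w$ and $\partial_\zeta f$ at $\zeta_k + \rho\tau$ and discarding the purely imaginary contribution from the singular term $\gamma_k/(2\pi i \rho \tau)$, one arrives at the compact representation
\[
\mathscr{A}_k(\tau) = \realpart\!\bigl(\tau(\partial_\zeta w - c\,\partial_\zeta f)(\zeta_k + \rho\tau)\bigr).
\]
Substituting $\tau \mapsto -\overline{\tau}$ and invoking the preceding identity gives $\mathscr{A}_k(-\overline{\tau}) = -\realpart\bigl(\tau(\partial_\zeta w - c\,\partial_\zeta f)(\zeta_k + \rho\tau)\bigr) = -\mathscr{A}_k(\tau)$. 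Similarly, $\mathscr{B}_k = \rho |U|^2 - \gamma_k^2/(4\pi^2 \rho) - Q_k$ together with the invariance of $|U|^2$ yields $\mathscr{B}_k(-\overline{\tau}) = \mathscr{B}_k(\tau)$. These are precisely the identities in \eqref{symmetries of A and B array} once one recalls that for real-valued $\varphi$ on $\mathbb{T}$ one has $\varphi^*(\tau) = \varphi(\overline{\tau})$.

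Finally, a short inspection of the defining conditions $i^m \widehat{\varphi}_m \in i\mathbb{R}$ and $i^m \widehat{\varphi}_m \in \mathbb{R}$ in \eqref{subspaces} shows that, for real-valued $\varphi \in C^{\ell-1+\alpha}(\mathbb{T})$, the relation $\varphi(\tau) = -\varphi(-\overline{\tau})$ characterizes membership in $C^{\ell-1+\alpha}_{\textup{ii}}(\mathbb{T})$ while $\varphi(\tau) = \varphi(-\overline{\tau})$ characterizes $C^{\ell-1+\alpha}_{\textup{ri}}(\mathbb{T})$, yielding the claimed range inclusion. I do not anticipate any serious obstacle; the only delicate step is the careful bookkeeping needed to translate the two commuting complex reflections on $\zeta$-space into the corresponding transformations of $\tau$ on $\mathbb{T}$, which is controlled entirely by the placement $\zeta_k \in i\mathbb{R}$.
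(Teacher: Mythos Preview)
Your proposal is correct and follows essentially the same approach as the paper: both combine the two reflection symmetries via Remark~\ref{symmetryremark?} to obtain $f(-\overline\zeta)=-\overline{f(\zeta)}$ and $w(-\overline\zeta)=-\overline{w(\zeta)}$, then differentiate to get $\partial_\zeta f(-\overline\zeta)=\overline{\partial_\zeta f(\zeta)}$ (the paper records the equivalent trace identity~\eqref{symmetriesoff,wproof}) and feed this into the local expressions \eqref{Kinematiconditionlocal}, \eqref{bernoulliconditionlocal1} for $\mathscr A_k,\mathscr B_k$. Your explicit verification that the resulting functional identities characterize $C_{\textup{ii}}^{\ell-1+\alpha}$ and $C_{\textup{ri}}^{\ell-1+\alpha}$ is a nice addition that the paper leaves to the reader.
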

\begin{proof}
By Remark~\ref{symmetryremark?}, the assumption $\mu,\nu \in C^{\ell+\alpha}_{\textup{ii}}(\mathbb{T})$, and Lemmas~\ref{symmetrylemma2} and \ref{symmetrylemma3} it follows that $f$ and $w$ are odd, real on real, and thus $\mathscr{D}$ is symmetric. Proving~\eqref{symmetries of A and B array} is best done through the local formulation. Recall that according to \eqref{definitionsforscrA,B} and \eqref{Kinematiconditionlocal},
\begin{align*}
    \mathscr{A}_k^*(\tau)=\mathscr{A}_k (\overline{\tau}) &= \realpart{ \left(\overline{\tau}\left( \partial_\zeta w (\zeta_k+\rho\overline{\tau}) - c\partial_\zeta f(\zeta_k + \rho\overline{\tau}) \right) \right)}.
\end{align*}
The symmetries of $f,w$ easily lead to 
\begin{equation}\label{symmetriesoff,wproof}
    \partial_\zeta f (\zeta_k + \rho \overline{\tau}) = \overline{\partial_\zeta f (\zeta_k-\rho\tau)}, \quad \partial_\zeta w (\zeta_k + \rho \overline{\tau}) = \overline{\partial_\zeta w (\zeta_k-\rho\tau)}
\end{equation}
for all $k=1,\ldots, M$. Thus it directly follows that $\mathscr{A}_k^* = -\mathscr{A}_k(-\,\cdot)$. The result for $\mathscr{B}$ is proved similarly using the local Bernoulli condition \eqref{bernoulliconditionlocal1} and the definition \eqref{definitionsforscrA,B}.
\end{proof}
Now we prove that the situation is similar for a $2$P configuration. 
\begin{lemma}\label{2Pfirstlemma}
Suppose that $M=4$ and
\begin{equation}\label{2Psetup}
    \zeta_1\in i\mathbb{R}, \quad \zeta_2 = \frac{T}{2} + i\mathbb{R}, \quad \zeta_3 = \overline{\zeta_1},\quad \zeta_4 = \overline{\zeta_2}, \quad \gamma_3 = -\gamma_1 , \quad \gamma_4 = -\gamma_2.
\end{equation}
If $\rho > 0$, $c\in \mathbb{R}$, and \eqref{conjugateconditionsonmunu} holds, then $f$ and $w$ are both real on real and odd, $\mathscr{D}$ is symmetric, and
\[
\mathscr{A}_{k}^* (\tau ) = -\mathscr{A}_{k}(-\overline{\tau}), \quad \mathscr{B}_k^*(\tau)=\mathscr{B}(-\tau),\quad \text{for all } k=1,\ldots ,M. 
\]
Accordingly, $\range{\mathscr{F}_k} \subset C_{\textup{ii}}^{\ell-1 + \alpha} (\mathbb{T}) \times C_{\textup{ri}}^{\ell-1+\alpha}(\mathbb{T})$. 
\end{lemma}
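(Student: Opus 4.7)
The plan is to dispatch the real-on-real and oddness assertions separately, and then combine them along the lines of Lemma~\ref{symmetrylemma4} to deduce the remaining conclusions. The real-on-real property of $f$ and $w$ is immediate from Lemma~\ref{symmetrylemma3}: the configuration~\eqref{2Psetup} satisfies its hypothesis~\eqref{conjugateconidition} with $m=2$, since $\zeta_{2+k}=\overline{\zeta_k}$ and $\gamma_{2+k}=-\gamma_k$ for $k=1,2$, and~\eqref{conjugateconditionsonmunu} together with $c\in\mathbb{R}$ completes the hypothesis.

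The oddness requires a genuinely new argument. The key geometric observation is that~\eqref{2Psetup} is also invariant under $\zeta\mapsto-\zeta$ \emph{up to the period shift}: $-\zeta_1=\zeta_3$ because $\zeta_1\in i\mathbb{R}$, and $-\zeta_2\equiv\zeta_4\pmod{T}$ because $\zeta_2\in T/2+i\mathbb{R}$. Together with $\gamma_{2+k}=-\gamma_k$, this mirrors the hypothesis~\eqref{symmetrylemma2centers} of Lemma~\ref{symmetrylemma2} except for the $T$-shift in the pair $(\zeta_2,\zeta_4)$, which I would absorb using the $\pi$-periodicity of $\cot$ and the identity $\sin(\pi(\placeholder-T)/T)=-\sin(\pi\placeholder/T)$. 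Regrouping the point vortex potential as
\[
w^0(\zeta)=\frac{\gamma_1}{2\pi i}\log\frac{\sin(\pi(\zeta-\zeta_1)/T)}{\sin(\pi(\zeta-\zeta_3)/T)}+\frac{\gamma_2}{2\pi i}\log\frac{\sin(\pi(\zeta-\zeta_2)/T)}{\sin(\pi(\zeta-\zeta_4)/T)},
\]
each ratio inverts under $\zeta\mapsto-\zeta$---in the second ratio the $(-1)$ factors from the period shift cancel between numerator and denominator---so $w^0(-\zeta)=-w^0(\zeta)$. For the layer potential, the change of variable $\sigma\mapsto-\sigma$ together with the oddness of $\cot$ reduces the required identity $\mathcal{Z}^\rho\mu(-\zeta)=-\mathcal{Z}^\rho\mu(\zeta)$ to the pairwise matching conditions $\mu_3(\sigma)=-\mu_1(-\sigma)$ and $\mu_4(\sigma)=-\mu_2(-\sigma)$. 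Combining these with the real-on-real identities $\mu_3=\mu_1^*$, $\mu_4=\mu_2^*$ forces $\mu_1(\overline{\sigma})=-\mu_1(-\sigma)$ and analogously for $\mu_2,\nu_1,\nu_2$; a direct Fourier-coefficient check shows this is equivalent to $\mu_1,\mu_2,\nu_1,\nu_2\in C^{\ell+\alpha}_{\textup{ii}}(\mathbb{T})$, the natural ambient 2P subspace tacit in the statement.

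With both symmetries established, $\mathscr{D}=f(\confD_\rho)$ is invariant under $\zeta\mapsto\overline{\zeta}$ and $\zeta\mapsto-\zeta$ and is therefore symmetric. The symmetry identities for $\mathscr{A}_k$ and $\mathscr{B}_k$ then follow verbatim from the derivation in Lemma~\ref{symmetrylemma4}: the local formulation~\eqref{definitionsforscrA,B} together with the boundary identities~\eqref{symmetriesoff,wproof} for $\partial_\zeta f$ and $\partial_\zeta w$ persist in the 2P geometry and yield $\mathscr{A}_k^*(\tau)=-\mathscr{A}_k(-\overline{\tau})$ and $\mathscr{B}_k^*(\tau)=\mathscr{B}_k(-\tau)$, whence the range classification $\range\mathscr{F}_k\subset C^{\ell-1+\alpha}_{\textup{ii}}\times C^{\ell-1+\alpha}_{\textup{ri}}$ is a routine Fourier-coefficient consequence. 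The main technical obstacle I anticipate is the careful bookkeeping of the period-$T$ offset between $-\zeta_2$ and $\zeta_4$: this introduces competing signs from $\sin(\pi(\placeholder-T)/T)=-\sin(\pi\placeholder/T)$ and from the orientation of the $\sigma\mapsto-\sigma$ substitution, which must be verified to cancel at each step---especially when extracting the mode-by-mode $C_{\textup{ii}}/C_{\textup{ri}}$ structure of the codomain.
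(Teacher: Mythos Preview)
Your proposal is correct and mirrors the paper's own proof closely: both derive the real-on-real property from Lemma~\ref{symmetrylemma3}, establish oddness separately by exploiting the period-$T$ offset between $-\zeta_2$ and $\zeta_4$ (the paper invokes $\cot(\pi/2+\placeholder)=-\tan$ where you use the equivalent $\sin$ identity, and then verifies the layer-potential oddness via the same $I_k/J_k$ pairing you describe), and finally carry over the argument of Lemma~\ref{symmetrylemma4} verbatim for the $\mathscr{A}_k,\mathscr{B}_k$ identities. Your explicit identification of the tacit hypothesis $\mu_1,\mu_2,\nu_1,\nu_2\in C^{\ell+\alpha}_{\textup{ii}}(\mathbb{T})$ is exactly what the paper assumes in its proof as well.
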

\begin{proof}
The proof follows that of Lemmas~\ref{symmetrylemma2} and~\ref{symmetrylemma3} so we only highlight the main differences.  The elementary fact $\cot{(\pi/2 + \placeholder)} = -\tan$ easily leads to oddness of $w^0$. Thus, it suffices to show that $\mathcal{Z}^{\rho}\mu$ is odd. Observe that \eqref{conjugateconditionsonmunu} and $\nu_1,\nu_2 \in  C_{\textup{ii}}^{\ell+\alpha}(\mathbb{T})$ imply that $\nu_3(\tau) =-\nu_1(-\tau) $ and $\nu_4(\tau) = -\nu_2(-\tau)$ according to Remark \ref{symmetryremark?}. In the notation of the proof of Lemma \ref{symmetrylemma2} it suffices to show that $I_1(-\zeta) =- J_1(\zeta)$ and $I_2(-\zeta) = -J_2(\zeta)$; the former equality has the same proof as in Lemma \ref{symmetrylemma2}. For the latter we have
\begin{align*}
     I_{2} (-\zeta) &= \frac{1}{2Ti}\int_{\mathbb{T}}-\nu_{2}(\sigma)\cot{\left( \frac{\pi}{T}(\rho (-\sigma) - T/2-i\imagpart\zeta_2-\zeta)\right)}\rho \, d\sigma & \\
       &=\frac{1}{2Ti}\int_{\mathbb{T}}-\nu_{2}(\sigma)\cot{\left( \frac{\pi}{T}(\rho (-\sigma) + T/2+i\imagpart\zeta_4-\zeta)\right)}\rho \, d\sigma  = -J_{2}(\zeta).
\end{align*}
Thus, $f$ and $w$ are real on real and odd. As in the proof of Lemma~\ref{symmetrylemma4}, we easily verify that \eqref{symmetriesoff,wproof} still holds for this configuration and the result follows. 
\end{proof}

\subsection{Desingularization of symmetric configurations}

First consider the translating configuration of a von K\'arm\'an vortex street, which exhibits the symmetry of Lemma~\ref{symmetrylemma2},
\[
\Lambda_{\textup{vk}}^{0} \colonequals \left(\zeta_1=\tfrac{\pi}{4}+i,~\zeta_2=-\tfrac{\pi}{4}-i,~\gamma_1=1,~\gamma_2=-1,~ c= \tfrac{\tanh{(2)}}{2\pi},~ T= \pi \right).
\]
\begin{corollary}[von Kármán vortex street] \label{VonKarmanCorollaryfinal}
Let $\Lambda_{\textup{vk}}^{0}$ be given as above. 
There exists a global curve $\mathscr{C}_{\textup{vk}}$ of solutions to the periodic steady hollow vortex problem that admits the $C^0$ parametrization
\[
	\cm_{\textup{vk}}=\{ (\mu(s),\nu(s),Q(s),c(s),\rho(s) ): s\in [0,\infty) \}
\]
with all of the remaining parameters fixed to their values in $\Lambda_{\textup{vk}}^{0}$. The curve exhibits the following properties. 
\begin{enumerate}[label=\rm(\alph*)]
	\item\textup{(Bifurcation point)} \label{VKfinalbifurcation} The curve bifurcates from the von Kármán point vortex street in that
	\[
		\mu(0) = 0, \qquad \nu(0) = 0, \qquad Q(0) =0,\qquad c(0)=\tfrac{\tanh{2}}{2\pi},\qquad \rho(0)=0.
	\]
    which corresponds to \eqref{VKintrocoordinates}.
	\item\textup{(Blowup)} \label{Vkfinalblowup}  For each $s>0$ we have $\rho(s) > 0$, and in the limit 
      \begin{equation}
      \lim_{s\to \infty}\left( N_{\textup{c}} (s) + N_{\textup{v}}(s)\right) =\infty.
      \end{equation}

      \item\textup{(Symmetry)} \label{VKfinalsymmetry} The densities $\mu(s),\nu(s)\in \mathring{C}^{\ell+\alpha}(\mathbb{T})^2$ and normalized Bernoulli constant $Q(s) \in \mathbb{R}^2$ satisfy,
      \begin{equation}
      \mu_2(s) = -\mu_1(s)(-\placeholder), \quad \nu_2(s) = -\nu_1(s)(-\placeholder), \quad \text{ and } \quad Q_2(s) = Q_1(s).
      \end{equation}
	Accordingly, the fluid domain $\mathscr{D}(s) \colonequals  f(s)(\confD(s))$ exhibits the symmetry $-\mathscr{D}(s) = \mathscr{D}(s)$ and the relative velocity field is even. 
      \end{enumerate}
\end{corollary}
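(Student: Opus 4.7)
The plan is to apply the symmetric variants of Theorems \ref{localtheorem} and \ref{global desingularization theorem} to $\Lambda_{\textup{vk}}^0$, and then refine the blowup alternative using the wave speed bound of Proposition \ref{wavespeedProp}. I decompose the parameter space so that $\lambda = c \in \mathbb{C}$ and $\lambda' = (\zeta_1,\zeta_2,\gamma_1,\gamma_2,T)$ is fixed to the values in $\Lambda_{\textup{vk}}^0$. Using $\zeta_1-\zeta_2 = \pi/2 + 2i$, $T=\pi$, and $\cot(\pi/2 + 2i) = -i\tanh(2)$, a direct computation of~\eqref{definition V_k} gives
\[
\mathcal{V}_1(\Lambda_{\textup{vk}}^0) = \frac{\gamma_2}{2i\pi}\cot\!\left(\pi/2+2i\right) - c = \frac{\tanh 2}{2\pi} - c = 0,
\]
and $\mathcal{V}_2(\Lambda_{\textup{vk}}^0)=0$ follows from the odd symmetry $\zeta_2=-\zeta_1$, $\gamma_2=-\gamma_1$. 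Since $D_c\mathcal{V}_1 = -\mathrm{Id}_{\mathbb{C}}$, the configuration is non-degenerate.

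Next I would restrict $\F$ to the invariant subspace cut out by~\eqref{symmetrylemma2conditions} (with $m=1$), so that $\mu_2 = -\mu_1(-\,\cdot)$, $\nu_2 = -\nu_1(-\,\cdot)$, and $Q_2 = Q_1$; by Lemma \ref{symmetrylemma2}, the system $\F=0$ reduces on this subspace to the single pair $(\A_1,\B_1)=0$. The block-diagonal analysis of $D_u\F(u_0,0)$ carried out in Lemma \ref{kernelrangelemma} goes through verbatim on the symmetric subspace, and the reduced linearization is an isomorphism precisely because $D_c\mathcal{V}_1$ is. The analytic implicit function theorem then delivers $\cm_\loc$ with the bifurcation data in part~\ref{VKfinalbifurcation}, while part~\ref{VKfinalsymmetry} is built into the restriction. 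The global curve $\cm_{\textup{vk}}\supset\cm_\loc$ is obtained by running the global continuation argument of Section~\ref{global section} inside this same symmetric subspace; the preliminary Theorem \ref{prelim global theorem} applies without change.

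Finally, I would upgrade alternatives~\ref{K blowup alternative}--\ref{K loop} to the sharper blowup of part~\ref{Vkfinalblowup}. Assume for contradiction that $\sup_{s\geq 0}(N_{\textup{c}}(s) + N_{\textup{v}}(s)) < \infty$. As in the proof of Theorem \ref{global desingularization theorem}, the fixed circulation $\gamma_1$ yields
\[
q_1(s) \gtrsim \frac{|\gamma_1|}{\rho(s)\,N_{\textup{c}}(s)},
\]
and combining with $q_1(s)\leq N_{\textup{v}}(s)$ produces the uniform lower bound $\rho(s)\gtrsim |\gamma_1|/(N_{\textup{c}}(s)N_{\textup{v}}(s))$. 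With this in hand, Proposition \ref{wavespeedProp} provides $|c(s)| \leq C\,N_{\textup{v}}(s)(1+N_{\textup{c}}(s)^2)$, where $C$ depends only on $M=2$, $T=\pi$, and the lower bound on $\rho(s)$ just obtained. Hence $|\lambda(s)|=|c(s)|$ is also bounded, and the remainder of the proof of Theorem \ref{global desingularization theorem} rules out~\ref{K blowup alternative}--\ref{K loop}, contradicting maximality. The claim $\rho(s)>0$ for $s>0$ in part~\ref{Vkfinalblowup} follows from Lemma~\ref{Solutionslemma}\ref{point vortex part}: any zero of $\rho$ lies in the trivial point vortex locus, and the local parametrization~\eqref{local asymptotics} together with local analyticity of $\cm_{\textup{vk}}$ and uniqueness in the implicit function theorem forbids such a point from reappearing for $s>0$.

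The main obstacle is making the wave speed bound of Proposition \ref{wavespeedProp} effective \emph{globally} along $\cm_{\textup{vk}}$: the constant there depends on a lower bound on $\rho$, which is not assumed a priori. Thus the whole argument hinges on using the fixed circulation $\gamma_1$ to convert a hypothetical bound on $N_{\textup{v}}$ into a uniform lower bound on $\rho(s)$, which then feeds back into Proposition \ref{wavespeedProp} to close the loop.
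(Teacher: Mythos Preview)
Your proposal is correct and follows essentially the same approach as the paper: restrict to the odd-symmetry subspace via Lemma~\ref{symmetrylemma2}, reduce to a single pair $(\A_1,\B_1)$, verify invertibility of the linearization through the row-reduced operator (equivalently, through $D_c\mathcal{V}_1$), apply the implicit function theorem and then the global continuation machinery, and finally use the fixed circulation $\gamma_1$ to get a lower bound on $\rho(s)$ which feeds into Proposition~\ref{wavespeedProp} to control $|c(s)|$. The paper writes out the row-reduced operator $\mathscr{L}_{\textup{vk}}$ explicitly rather than invoking Lemma~\ref{kernelrangelemma} abstractly, but the content is the same; your justification of $\rho(s)>0$ via uniqueness in the implicit function theorem is phrased slightly differently from the paper (which folds it into ruling out the closed-loop alternative), but the underlying mechanism---that any zero of $\rho$ forces a return to the unique trivial solution and hence a loop, contradicting the lower bound on $\rho$---is identical.
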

\begin{proof}
We start by proving the existence of a local curve of solutions $\mathscr{C}_{\textup{vk,loc}}$. Slightly abusing notation, we consider the new unknown $u= (\mu_1,\nu_1,Q_1,c)$ and put
\begin{equation}\label{BOGOVKchoice}
\mu_2=-\mu_1(-\placeholder),\quad \nu_2=-\nu_1(-\placeholder),\quad Q_2=Q_1.
\end{equation}
Thanks to Lemma~\ref{symmetrylemma2}, the symmetries of $\Lambda_{\textup{vk}}^{0}$ and \eqref{BOGOVKchoice} we have that the kinematic and Bernoulli conditions are satisfied on the first vortex boundary if and only if they are satisfied on the second vortex boundary. This justifies defining the new nonlinear operator by
\begin{equation}\label{BOGOVK}
    \mathscr{F}_{\textup{vk}}(\mu_1,\nu_1,Q_1,c ;\rho) \colonequals \F_1(\mu_1, -\mu_1(-\placeholder), \nu_1, -\nu_1(-\placeholder), Q_1, Q_1,c ; \rho),
\end{equation}
with domain and range,
\[
\mathscr{F}_{\textup{vk}} \colon \mathcal{O}_{\textup{vk}}\subset \mathring{C}^{\ell+\alpha}(\mathbb{T})\times \mathring{C}^{\ell+\alpha}(\mathbb{T})\times \mathbb{R}\times \mathbb{C}\times \mathbb{R}\rightarrow \mathring{C}^{\ell-1+\alpha}(\mathbb{T})\times C^{\ell-1+\alpha}(\mathbb{T}).
\]
The open set  $\mathcal{O}_{\textup{vk}}$ is derived from $\mathcal{O}$ in \eqref{definition for calO} the obvious way.  The analysis is much the same as in the proof Theorem \ref{localtheorem}; bijectivity of the linearized operator at the point vortex configuration is equivalent to the bijectivity of the row-reduced operator 
\[
\mathscr{L}_{\textup{vk}} \colon \mathring{C}^{\ell+\alpha}(\mathbb{T})\times\mathbb{R}^3\rightarrow C^{\ell-1+\alpha}(\mathbb{T}), \quad \mathscr{L}_{\textup{vk}}\begin{bmatrix}
    \dot{\mu}_1\\ \dot{Q}_1\\ \dot{c}
\end{bmatrix} \colonequals -\frac{1}{2\pi^2}\realpart \mathcal{C}^0 \dot{\mu}_1'-\dot{Q}_1-\frac{2}{\pi}\realpart(i\tau \dot{c}).
\]
It is apparent that $\mathscr{L}_{\textup{vk}}$ is an isomorphism, which furnishes the existence of the local curve $\mathscr{C}_{\text{vk,loc}}$ by the implicit function theorem. An application of Theorem~\ref{prelim global theorem} extends $\mathscr{C}_{\textup{vk,loc}}$ to the global curve $\mathscr{C}_{\textup{vk}}$. Observe that the analysis of Section~\ref{global section} holds for $\mathscr{F}_{\textup{vk}}$ because, by Lemma~\ref{symmetrylemma2} and \eqref{BOGOVKchoice}, every element of its zero-set uniquely corresponds to an element of $\F^{-1}(0)$.  All of the a priori estimates are therefore still valid by the same arguments. The statement in part~\ref{VKfinalbifurcation} is immediate; part~\ref{VKfinalsymmetry} is likewise an easy consequence of~\eqref{BOGOVKchoice} and Lemma~\ref{symmetrylemma2}. 

Since both circulations are fixed, by Theorem~\ref{global desingularization theorem}, to prove the statement in~\ref{Vkfinalblowup}, we need only show that $\lambda(s)=c(s)$ can be controlled in terms of $\Nconf(s)$ and $\Nvel(s)$. In particular, the inequality~\eqref{goodeinequality} gives a uniform lower bound on $\rho(s)$ in terms of $\Nconf(s)$. Then, applying the wave speed bound in~\eqref{circulationwavespeedbounds} completes the proof. \end{proof}

Now consider the periodic translating configuration of an array of point vortices:
\begin{equation*}
    \Lambda_{\mathrm{ar}}^0 \colonequals \left( \zeta_1=i, \, \zeta_2=-i, \, \gamma_1=1,\, \gamma_2=-1,\, c=\tfrac{\coth(2)}{2\pi}, \, T = \pi \right).
\end{equation*}
\begin{corollary}[Vortex array] \label{ArrayCorollaryfinal}
Let $\Lambda_{\textup{ar}}^{0}$ be given as above. 
There exists a global curve $\mathscr{C}_{\textup{ar}}$ of solutions to the periodic steady hollow vortex problem that admits the $C^0$ parametrization
\[
	\cm_{\textup{ar}}=\{ (\mu(s),\nu(s),Q(s),c(s),\rho(s) ): s\in [0,\infty) \}
\]
with all of the remaining parameters fixed to their values in $\Lambda_{\textup{ar}}^{0}$. The curve exhibits the following properties.  
\begin{enumerate}[label=\rm(\alph*)]
	\item\textup{(Bifurcation point)} \label{ARfinalbifurcation} The curve bifurcates from the (unstaggered) point vortex array in that
	\[
		\mu(0) = 0, \qquad \nu(0) = 0, \qquad Q(0) =0,\qquad c(0)=\tfrac{\coth{2}}{2\pi},\qquad \rho(0)=0.
	\]
    which corresponds to \eqref{ARintrocoordinates}.
	\item\textup{(Blowup)} \label{ARfinalblowupC}  For each $s>0$ we have $\rho(s) > 0$, $c(s)\in \mathbb{R}$ and in the limit 
      \begin{equation}
      \lim_{s\to \infty}\left( N_{\textup{c}} (s) + N_{\textup{v}}(s)\right) =\infty.
      \end{equation}
      \item\textup{(Symmetry)} \label{ARfinalsymmetry} The densities $\mu(s),\nu(s)\in \mathring{C}_{\textup{ii}}^{\ell+\alpha}(\mathbb{T})^2$ and normalized Bernoulli constant $Q(s) \in \mathbb{R}^2$ satisfy
      \begin{equation}
      \mu_2(s)  = \mu_1(s)^*, \quad \nu_2(s) = \nu_1(s)^*, \quad \text{ and } \quad Q_2(s) = Q_1(s).
      \end{equation}
	Accordingly, the fluid domain $\mathscr{D}(s)\colonequals f(s)(\confD(s))$ is symmetric and the relative velocity field is real on real and even. 
      \end{enumerate}
\end{corollary}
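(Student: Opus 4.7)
The plan is to proceed exactly as in the proof of Corollary~\ref{VonKarmanCorollaryfinal}, replacing the odd symmetry reduction of Lemma~\ref{symmetrylemma2} by the conjugate-plus-imaginary-axis reduction provided by Lemmas~\ref{symmetrylemma3} and~\ref{symmetrylemma4}. Since $\zeta_2 = \overline{\zeta_1} = -\zeta_1 \in i\mathbb{R}$, $\gamma_2 = -\gamma_1$, and $c^0 = \coth(2)/(2\pi) \in \mathbb{R}$, Remark~\ref{symmetryremark?} shows that imposing $c \in \mathbb{R}$ together with $\mu_2 = \mu_1^*$, $\nu_2 = \nu_1^*$, $Q_2 = Q_1$, and $\mu_1,\nu_1 \in C_{\textup{ii}}^{\ell+\alpha}(\mathbb{T})$ produces exactly the full symmetries claimed in part~\ref{ARfinalsymmetry}. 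I therefore introduce the reduced unknown $u = (\mu_1,\nu_1,Q_1,c)$ in
\[
    \Xspace_{\textup{ar}} \colonequals \mathring{C}_{\textup{ii}}^{\ell+\alpha}(\mathbb{T}) \times \mathring{C}_{\textup{ii}}^{\ell+\alpha}(\mathbb{T}) \times \mathbb{R} \times \mathbb{R},
\]
and define
\[
    \F_{\textup{ar}}(u;\rho) \colonequals \bigl(\A_1(\mu_1,\mu_1^*,\nu_1,\nu_1^*,Q_1,Q_1,c;\rho),\ \B_1(\mu_1,\mu_1^*,\nu_1,\nu_1^*,Q_1,Q_1,c;\rho)\bigr).
\]
By Lemma~\ref{symmetrylemma4}, this is a real-analytic mapping from a neighborhood $\mathcal{O}_{\textup{ar}} \subset \Xspace_{\textup{ar}} \times \mathbb{R}$ of the array point vortex configuration into $\Yspace_{\textup{ar}} \colonequals \mathring{C}_{\textup{ii}}^{\ell-1+\alpha}(\mathbb{T}) \times C_{\textup{ri}}^{\ell-1+\alpha}(\mathbb{T})$, and the equations on the second vortex boundary are automatic once those constraints are imposed.

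Next, I linearize $\F_{\textup{ar}}$ at $u^0 = (0,0,0,\coth(2)/(2\pi))$ and row-reduce as in Lemma~\ref{kernelrangelemma}, which yields an operator of the form
\[
    \mathscr{L}_{\textup{ar}}\begin{bmatrix} \dot\mu_1 \\ \dot Q_1 \\ \dot c \end{bmatrix}
    = -\frac{1}{2\pi^2}\realpart \mathcal{C}^0 \dot\mu_1' - \dot Q_1 - \frac{2}{\pi}\realpart(i\tau\,\dot c)
\]
acting from $\mathring{C}_{\textup{ii}}^{\ell+\alpha}(\mathbb{T}) \times \mathbb{R}^2$ into $C_{\textup{ri}}^{\ell-1+\alpha}(\mathbb{T})$. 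Using the Fourier formulas~\eqref{operators} together with the Fourier-coefficient characterizations of $C_{\textup{ii}}$ and $C_{\textup{ri}}$, one checks mode by mode that $-\dot Q_1$ generates the constant mode, the term $-\tfrac{2}{\pi}\realpart(i\tau\dot c) = \tfrac{2\dot c}{\pi}\sin\theta$ with $\dot c\in\mathbb R$ exactly covers the one-real-dimensional first Fourier mode of $C_{\textup{ri}}^{\ell-1+\alpha}$, and $\realpart\mathcal{C}^0\partial_\tau$ restricts to a bijection $\mathring{C}_{\textup{ii}}^{\ell+\alpha}\to \proj_{>1} C_{\textup{ri}}^{\ell-1+\alpha}$. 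Thus $\mathscr{L}_{\textup{ar}}$ is an isomorphism, the analytic implicit function theorem produces a local real-analytic curve $\cm_{\textup{ar},\loc}$, and Theorem~\ref{prelim global theorem} extends it to a global analytic curve $\cm_{\textup{ar}}$. All of the a priori estimates of Section~\ref{global section} transfer without change, because each zero of $\F_{\textup{ar}}$ corresponds bijectively, via the prescribed symmetry constraints, to a zero of $\F$ in the image subspace.

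Finally, to upgrade the preliminary alternatives~\ref{K blowup alternative}--\ref{K loop} into the single blowup statement in part~\ref{ARfinalblowupC}, I argue as in the proof of Theorem~\ref{global desingularization theorem}. Since $\gamma_1 = 1$ is fixed, the kinematic condition forces $q_1(s) \gtrsim 1/(\rho(s)\Nconf(s))$, which combined with $q_1(s) \leq \Nvel(s)$ gives a uniform positive lower bound on $\rho(s)$ whenever $\Nconf(s) + \Nvel(s)$ stays bounded; the upper bound on $\rho(s)$ follows from the fixed vortex centers. Proposition~\ref{wavespeedProp} then controls $|c(s)|$ in terms of $\Nconf(s)$ and $\Nvel(s)$, so the only varying parameter is bounded, and the remaining estimates of Section~\ref{global section} exclude the loss of compactness, loss of Fredholmness, and closed-loop alternatives. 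The most delicate point in the argument is the linearized analysis on the symmetric subspaces: one must verify that the scalar $\dot c \in \mathbb{R}$ (rather than $\mathbb{C}$ as in the von K\'arm\'an case) is precisely what is needed to parameterize the first-mode subspace of $C_{\textup{ri}}^{\ell-1+\alpha}$, and more generally that the kernel $\cot$ preserves the $C_{\textup{ii}}/C_{\textup{ri}}$ dichotomy under the symmetry reduction.
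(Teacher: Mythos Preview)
Your proposal is correct and follows essentially the same argument as the paper: reduce to the first vortex boundary via Lemmas~\ref{symmetrylemma3} and~\ref{symmetrylemma4}, restrict $\F_{\textup{ar}}$ to the $C_{\textup{ii}}/C_{\textup{ri}}$ subspaces with $c\in\mathbb{R}$, verify that the row-reduced operator $\mathscr{L}_{\textup{ar}}$ is an isomorphism via the multiplier formulas~\eqref{operators}, and then extend globally and eliminate $|c(s)|$ from the blowup using Proposition~\ref{wavespeedProp}. The only step the paper makes slightly more explicit is checking beforehand that $\mathscr{A}_{1\nu_1}$ restricts to an isomorphism $\mathring{C}_{\textup{ii}}^{\ell+\alpha}\to\mathring{C}_{\textup{ii}}^{\ell-1+\alpha}$, but this is implicit in your row-reduction step.
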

\begin{proof}
Similar to the proof of Corollary~\ref{VonKarmanCorollaryfinal}, we consider the unknown $u=(\mu_1,\nu_1,Q_1,c)$ and put
\[
\mu_2 = \mu_1^{*},\quad \nu_2=\nu_1^{*}, \quad Q_2=Q_1.
\]
It suffices to work with the nonlinear operator, 
\begin{equation*}
    \mathscr{F}_{\textup{ar}}(\mu_1,\nu_1,Q_1,c ;\rho) \colonequals \F_1(\mu_1, \mu_1^*, \nu_1, \nu_1^*, Q_1, Q_1,c ; \rho), 
\end{equation*}
thanks to Lemma~\ref{symmetrylemma3}. However in this case according to Lemma~\ref{symmetrylemma4} further restrictions can and should be made on the domain and codomain of $\mathscr{F}_{\textup{ar}}$. Namely, we consider
\[
\mathscr{F}_{\textup{ar}} \colon \mathcal{O}_{\textup{ar}}\subset 
\mathring{C}_{\textup{ii}}^{\ell+\alpha}(\mathbb{T})\times\mathring{C}_{\textup{ii}}^{\ell+\alpha}(\mathbb{T})\times \mathbb{R}^3\rightarrow \mathring{C}_{\textup{ii}}^{\ell-1+\alpha}(\mathbb{T})
\times C_{\textup{ri}}^{\ell-1+\alpha}(\mathbb{T}),
\]
for an open set $\mathcal{O}_{\textup{ar}}$ defined in the obvious way. The fact that $\mathscr{F}_{\textup{ar}}$ is well defined is a direct application of Lemma~\ref{symmetrylemma4}. We compute the linearized operator at the point vortex configuration exactly as in proof of Lemma~\ref{kernelrangelemma}. Thanks to the multiplier formulas~\eqref{operators}, we see that $\mathscr{A}_{1\nu_1} \colon \mathring{C}^{\ell + \alpha}_{\textup{ii}}(\mathbb{T})\to \mathring{C}^{\ell + \alpha}_{\textup{ii}}(\mathbb{T})$ is an isomorphism, so it suffices to consider the row-reduced operator:
\begin{equation}\label{rowredtranslating}
\mathscr{L}_{\textup{ar}}\begin{bmatrix}
    \dot{\mu_1}\\ \dot{Q}_1\\ \dot{c}
\end{bmatrix}:=-\frac{1}{2\pi^2}\realpart{\mathcal{C}^{0}\dot{\mu}_1^\prime}-\dot{Q}_1-\frac{2}{\pi}\realpart{(i\tau \dot{c})}
\end{equation}
now viewed as a mapping $\mathring{C}_{\imagimag}^{\ell+\alpha}\times \mathbb{R}\times \mathbb{R}\rightarrow C_{\textup{ri}}^{\ell-1+\alpha}(\mathbb{T})$. But again resorting to the multiplier formulas \eqref{operators} we have, 
\[
\range \realpart(\mathcal{C}^{0}\partial_{\tau})\big|_{\mathring{C}_{\imagimag}^{\ell+\alpha}}=P_{>1}C_{\realimag}^{\ell-1+\alpha}(\mathbb{T})
,\]
and $P_{\leq 1}C_{\textup{ri}}^{\ell-1+a}$ is a subset of the range $\mathbb{R}\times \mathbb{R}\ni(\dot{Q_1},\dot{c})\mapsto -\dot{Q_1}-2/\pi\realpart (i\tau \dot{c})$. Thus $\mathscr{L}_{\textup{ar}}$ is bijective. The implicit function theorem then furnishes the existence of a local curve $\mathscr{C}_{\textup{vk,loc}}$, which can be extended globally to the curve $\cm_{\textup{ar}}$ exactly as in the proof of Corollary~\ref{VonKarmanCorollaryfinal}. 
\end{proof}

Finally, consider the following $2$P configuration: 
\begin{align*}
      \Lambda_{\textup{2P}}^0 & \colonequals \Big( \zeta_1=i, \, \zeta_2=2i + \tfrac{\pi}{2},\, \zeta_3=-i,\,\zeta_4 = -2i + \tfrac{\pi}{2},\, \gamma_1 = \gamma_1^0, \\
      &\qquad\gamma_2=\gamma_2^0 ,\,\gamma_3 = -\gamma_1^0,\, \gamma_4=-\gamma_2^0,\, c=1,\,T=\pi\Big),
 \end{align*}
for $\gamma_1^0,\,\gamma_2^0$ as in~\eqref{CUT1}.

\begin{corollary}[2P vortices] \label{2Pcorolaryfinal}
Let $\Lambda_{\textup{2P}}^{0}$ be given as above. 
There exists a global curve $\mathscr{C}_{\textup{2P}}$ of solutions to the periodic steady hollow vortex problem that admits the $C^0$ parametrization
\[
	\cm_{\textup{2P}}=\{ (\mu(s),\nu(s),Q(s),c(s),\gamma_2(s),\gamma_4(s), \rho(s) ): s\in [0,\infty) \}
\]
with all of the remaining parameters fixed to their values in $\Lambda_{\textup{2P}}^{0}$. The curve exhibits the following properties.
\begin{enumerate}[label=\rm(\alph*)]
	\item\textup{(Bifurcation point)} \label{2pfinalbifurcation} The curve bifurcates from the $2$P point vortex configuration in that
    \begin{align*}
        \mu(0) = 0, \quad \nu(0) = 0, \quad Q(0) =0,\quad c(0)=1, \quad \gamma_2(0)=- \gamma_4(0) = \gamma_2^0,\quad \rho(0)=0.
    \end{align*}
	\item\textup{(Blowup)} \label{2PfinalblowupC}  For each $s>0$ we have $\rho(s) > 0$, $c(s)\in \mathbb{R}$, and in the limit 
      \begin{equation}
      \lim_{s\to \infty}\left( N_{\textup{c}} (s) + N_{\textup{v}}(s)\right) =\infty.
      \end{equation}
      \item\textup{(Symmetry)} \label{2Pfinalsymmetry} The densities $\mu(s),\nu(s)\in \mathring{C}_{\textup{ii}}^{\ell+\alpha}(\mathbb{T})^4$, circulations $\gamma_2(s), \gamma_4(s)$, and Bernoulli constant $Q(s)$ satisfy
      \begin{align*}
     &      \mu_3(s)  = \mu_1(s)^*, \quad \nu_3(s) = \nu_1(s)^*,  \quad Q_3(s) = Q_1(s), \\
     &       \mu_4(s)  = \mu_2(s)^*, \quad \nu_4(s) = \nu_2(s)^*, \quad  Q_4(s) = Q_2(s), \quad \text{ and } \quad \gamma_4(s) = -\gamma_2(s). 
      \end{align*}
	Accordingly, the fluid domain $\mathscr{D}(s)\colonequals f(s)(\confD(s))$ is symmetric and the relative velocity field is real on real and even. 
      \end{enumerate}
\end{corollary}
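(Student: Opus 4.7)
The plan is to follow the strategy of Corollaries~\ref{VonKarmanCorollaryfinal} and~\ref{ArrayCorollaryfinal}, using Lemma~\ref{2Pfirstlemma} in place of Lemmas~\ref{symmetrylemma2}--\ref{symmetrylemma4} as the symmetry reduction, and then using Proposition~\ref{wavespeedProp} to bound the free parameters. The point vortex configuration $\Lambda_{\textup{2P}}^0$ satisfies the hypotheses of Lemma~\ref{2Pfirstlemma} by inspection: $\zeta_1,\zeta_3 \in i\R$ are conjugate, as are $\zeta_2,\zeta_4 \in \pi/2 + i\R$, and $\gamma_3 = -\gamma_1$, $\gamma_4 = -\gamma_2$. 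The choice of $\gamma_1^0,\gamma_2^0$ in \eqref{CUT1} is precisely what is needed so that $\mathcal{V}(\Lambda_{\textup{2P}}^0) = 0$, since by the symmetry the four equations $\mathcal{V}_k = 0$ reduce to two real-linear equations in $\gamma_1,\gamma_2$ whose unique solution is \eqref{CUT1}.

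First I would introduce the reduced unknown $u = (\mu_1,\mu_2,\nu_1,\nu_2,Q_1,Q_2,c,\gamma_2)$ with $\mu_j,\nu_j \in \mathring{C}_\imagimag^{\ell+\alpha}(\mathbb{T})$ and $c,\gamma_2\in\R$, and define the reduced operator
\[
\F_{\textup{2P}}(u;\rho) \colonequals \bigl(\F_1,\F_2\bigr)\bigl(\mu_1,\mu_2,\mu_1^*,\mu_2^*,\, \nu_1,\nu_2,\nu_1^*,\nu_2^*,\, Q_1,Q_2,Q_1,Q_2,\, \gamma_2,-\gamma_2,\, c;\rho\bigr),
\]
where $\gamma_1 = \gamma_1^0$ and all other parameters are held fixed. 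By Lemma~\ref{2Pfirstlemma}, every zero of $\F_{\textup{2P}}$ corresponds bijectively to a zero of the original $\F$ lying in the symmetric class, and the range of $\F_{\textup{2P}}$ naturally sits inside $\bigl(\mathring{C}_\imagimag^{\ell-1+\alpha}(\mathbb{T}) \times C_\realimag^{\ell-1+\alpha}(\mathbb{T})\bigr)^2$. The local existence then reduces to showing that $D_u \F_{\textup{2P}}(0,\ldots,0,c_0,\gamma_2^0;0)$ is an isomorphism between the appropriate spaces. Arguing as in Lemma~\ref{kernelrangelemma} and Corollary~\ref{ArrayCorollaryfinal}, the diagonal $\A_\nu^0$-block is an isomorphism on the symmetric subspaces by the multiplier identities \eqref{operators}, so one is left with a row-reduced operator $\mathscr{L}_{\textup{2P}}$ on $(\dot\mu_1,\dot\mu_2,\dot Q_1,\dot Q_2,\dot c,\dot\gamma_2)$. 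Projecting onto modes $|m|\ge 2$ inverts the $\mathcal{C}^0$-part in $\dot\mu_1,\dot\mu_2$; the remaining finite-dimensional piece on modes $|m|\le 1$ reduces to the $2\times 2$ real-linear problem governed by the symmetry-restricted Jacobian $D_{(c,\gamma_2)}\mathcal{V}$ at $\Lambda_{\textup{2P}}^0$. Invertibility of this Jacobian is equivalent to the non-vanishing of the denominator appearing in \eqref{CUT1}, which can be checked by a direct computation.

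Having obtained a local real-analytic curve, the global continuation follows verbatim from Theorem~\ref{prelim global theorem} together with the a priori bounds of Section~\ref{global section}, all of which transfer immediately to the symmetry-reduced problem. To convert the preliminary alternatives into the clean blowup statement in \ref{2PfinalblowupC}, observe that $\gamma_1 \equiv \gamma_1^0$ is fixed, so the argument leading to \eqref{goodeinequality} yields a uniform positive lower bound on $\rho(s)$ in terms of $\Nconf(s)$; since the vortex centers are frozen, $\rho(s)$ is also uniformly bounded above. Proposition~\ref{wavespeedProp} then provides $|c(s)| \lesssim \Nvel(s)(1+\Nconf(s)^2)$ and $|\gamma_2(s)| \le \rho(s)\Nconf(s)\Nvel(s)$, so the two free parameters are controlled by $\Nconf + \Nvel$. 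As in the proof of Theorem~\ref{global desingularization theorem}, Lemmas~\ref{SchauderLemma} and~\ref{uniformboundsformunu} then exclude the loss of compactness and loss of Fredholmness alternatives, while the lower bound on $\rho$ rules out the closed-loop alternative.

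The main obstacle is the non-degeneracy verification: one must check not only that $D_\lambda \mathcal{V}(\Lambda_{\textup{2P}}^0)$ has full rank on the two-parameter family $(c,\gamma_2)\in\R^2$, but also that this Jacobian respects the decomposition into $C_\imagimag$ and $C_\realimag$ Fourier classes dictated by Lemma~\ref{2Pfirstlemma}, so that the row-reduced finite-dimensional block actually lands in the correct target space. This amounts to a direct but somewhat delicate trigonometric identity involving $\tanh$ and $\coth$ of integer arguments, closely analogous to—but strictly larger than—the computation behind the explicit formulas \eqref{CUT1}. Once this is in hand, every other step is a straightforward adaptation of the preceding two corollaries.
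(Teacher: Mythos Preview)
Your approach is essentially identical to the paper's: reduce via Lemma~\ref{2Pfirstlemma} to a two-component problem in the symmetric subspaces $\mathring C_\imagimag^{\ell+\alpha}$, verify invertibility of the row-reduced operator, invoke the global continuation machinery, and then use Proposition~\ref{wavespeedProp} to control both free parameters $c$ and $\gamma_2$. One small correction: the finite-dimensional block you must invert is $D_{(c,\gamma_2)}(\mathcal V_1,\mathcal V_2)$, whose determinant works out to $(\tanh 1 - \tanh 3 - \coth 4)/(2\pi)$; this is \emph{not} the denominator in \eqref{CUT1}, which is instead $\det D_{(\gamma_1,\gamma_2)}(\mathcal V_1,\mathcal V_2)$ arising from solving for the circulations with $c$ held fixed. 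Both quantities happen to be nonzero, so the argument goes through, but your stated equivalence is incorrect.
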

\begin{proof}
Similar to the proof of Corollary~\ref{VonKarmanCorollaryfinal}, we consider the unknown $u=(\mu_1,\mu_2,\nu_1,\nu_2,Q_1,Q_2,c,\gamma_2)$. For the other two vortices we put
\[
\mu_3=\mu_1^*,\quad \mu_4=\mu_2^*,\quad \nu_3=\nu_1^*,\quad \nu_4=\nu_2^*,\quad Q_3=Q_1,\quad Q_4=Q_2,\quad \gamma_4 = -\gamma_2
.\]
According to Lemma~\ref{symmetrylemma2}, this choice ensures that the boundary conditions are satisfied on the third and fourth vortex boundaries if and only if they are satisfied on the first two vortex boundaries. Define a new nonlinear operator $\mathscr{F}_{\textup{2P}} = (\mathscr{F}_{\textup{2P},1},\mathscr{F}_{\textup{2P},2})$ via
\begin{align*}
     \mathscr{F}_{\textup{2P},j}(u;\rho)&\colonequals  \mathscr{F}_j(\mu_1,\mu_2,\mu_1^*,\mu_2^*,\nu_1,\nu_2,\nu_1^*,\nu_2^*,Q_1,Q_2,Q_1,Q_2,\gamma_2,-\gamma_2,c;\rho)
\end{align*}
for $j = 1,2$. By Lemma~\ref{2Pfirstlemma}, these are well-defined as a mappings
\[
\mathscr{F}_{\textup{2P},j} \colon \mathcal{O}_{\textup{2P}}\subset \mathring{C}_{\textup{ii}}^{\ell+ \alpha}(\mathbb{T})^2\times\mathring{C}_{\textup{ii}}^{\ell+ \alpha}(\mathbb{T})^2\times\mathbb{R}^2\times \mathbb{R}^2\times \mathbb{R}\rightarrow \mathring{C}_{\textup{ii}}^{\ell-1+\alpha}(\mathbb{T})\times C_{\textup{ir}}^{\ell-1+\alpha}(\mathbb{T}), 
\]
where $\mathcal{O}_{\textup{2P}}$ is the open set derived from $\mathcal{O}$ in the obvious way. As in the proof of Corollary~\ref{VonKarmanCorollaryfinal}, we investigate bijectivity of the row-reduced operator $\mathscr{L}_{\textup{2P}}=(\mathscr{L}_{\textup{2P},1},\mathscr{L}_{\textup{2P},2})$. Explicit computation reveals that
\begin{align*}
    &\partial_{c}\mathcal{V}_1 (\Lambda_{\textup{2P}}^0) = \partial_{c}\mathcal{V}_2 (\Lambda_{\textup{2P}}^0)=-1,\qquad
    \partial_{\gamma_2}\mathcal{V}_1 (\Lambda_{\textup{2P}}^0) = \frac{\tanh{1} - \tanh{3}}{2\pi},
    \qquad \partial_{\gamma_2}\mathcal{V}_2 (\Lambda_{\textup{2P}}^0) = \frac{\coth4}{2\pi}.    
\end{align*}
Thus, the row-reduced operator takes the form 
\[
\mathscr{L}_{\textup{st},j}
\begin{bmatrix}
    \dot{\mu}\\ \dot{Q} \\ \dot{c} \\\dot{\gamma}_2
\end{bmatrix}=
-\frac{(\gamma_j^0)^2}{2\pi^2}\realpart \mathcal{C}^{0}\dot{\mu}_j'-\dot{Q}_j-\frac{2 \gamma_j^0}{\pi}\realpart{\left(i\tau D_{(c,\gamma_2)}\mathcal{V}_j(\dot{c},\dot{\gamma_2}) \right)}
.\]
Observe that $D_{(c,\gamma_2)}\mathcal{V}_j(\dot{c},\dot{\gamma_2})$ is  real valued, and bijective onto $\mathbb{R}^2$. Arguing as in Corollary \ref{ArrayCorollaryfinal} we see that $\mathscr{L}$ is an isomorphism. The rest of the proof follows along the same lines as Corollary~\ref{VonKarmanCorollaryfinal} where now we also make use of the circulation bound from Proposition~\ref{wavespeedProp}.
\end{proof}

\section*{Acknowledgments}

The work of both authors was supported in part by the NSF through DMS-2306243 and the Simons Foundation through award 960210.

\bibliographystyle{siam}

\bibliography{projectdescription}

\def\cprime{$'$}
\begin{thebibliography}{10}

\bibitem{aref2007playground}
{\sc H.~Aref}, {\em Point vortex dynamics: a classical mathematics playground},
  J. Math. Phys., 48 (2007), pp.~065401, 23.

\bibitem{ArefExoticvortexwakes}
{\sc H.~Aref, M.~Stremler, and F.~Ponta}, {\em Exotic vortex wakes---point
  vortex solutions}, Journal of Fluids and Structures, 22 (2006), pp.~929--940.
\newblock "Bluff Body Wakes and Vortex-Induced Vibrations (BBVIV-4).

\bibitem{baker1976structure}
{\sc G.~R. Baker, P.~G. Saffman, and J.~S. Sheffield}, {\em Structure of a
  linear array of hollow vortices of finite cross-section}, Journal of Fluid
  Mechanics, 74 (1976), pp.~469--476.

\bibitem{StemlerBasu2Pwakes}
{\sc S.~Basu and M.~A. Stremler}, {\em Exploring the dynamics of `2{P}' wakes
  with reflective symmetry using point vortices}, J. Fluid Mech., 831 (2017),
  pp.~72--100.

\bibitem{buffoni2003analytic}
{\sc B.~Buffoni and J.~Toland}, {\em Analytic theory of global bifurcation},
  Princeton Series in Applied Mathematics, Princeton University Press,
  Princeton, NJ, 2003.
\newblock An introduction.

\bibitem{cao2014regularization}
{\sc D.~Cao, Z.~Liu, and J.~Wei}, {\em Regularization of point vortices pairs
  for the {E}uler equation in dimension two}, Arch. Ration. Mech. Anal., 212
  (2014), pp.~179--217.

\bibitem{chen2025vortex}
{\sc R.~M. Chen, K.~Varholm, S.~Walsh, and M.~H. Wheeler}, {\em Vortex-carrying
  solitary gravity waves of large amplitude}, Comm. Math. Phys., 406 (2025),
  pp.~Paper No. 149, 46.

\bibitem{chen2024global}
{\sc R.~M. Chen, S.~Walsh, and M.~H. Wheeler}, {\em Global bifurcation for
  monotone fronts of elliptic equations}, J. Eur. Math. Soc. (JEMS),  (2024).
\newblock Published online first.

\bibitem{chen2023desingularization}
\leavevmode\vrule height 2pt depth -1.6pt width 23pt, {\em Desingularization
  and global continuation for hollow vortices}, Ann. PDE, to appear (arXiv
  preprint arXiv:2303.03570),  (2025).

\bibitem{chen2025finite}
\leavevmode\vrule height 2pt depth -1.6pt width 23pt, {\em Finite-time
  self-similar implosion of hollow vortices}, arXiv preprint 2506.04093,
  (2025).

\bibitem{ConwayComplex}
{\sc J.~B. Conway}, {\em Functions of one complex variable}, vol.~11 of
  Graduate Texts in Mathematics, Springer-Verlag, New York-Berlin, second~ed.,
  1978.

\bibitem{Crowdy_Nelson_Krishnamurthy_2021}
{\sc D.~Crowdy, R.~Nelson, and V.~Krishnamurthy}, {\em `{H}-states': exact
  solutions for a rotating hollow vortex}, Journal of Fluid Mechanics, 913
  (2021), p.~R5.

\bibitem{crowdy2011analytical}
{\sc D.~G. Crowdy and C.~C. Green}, {\em Analytical solutions for von
  {K}{\'a}rm{\'a}n streets of hollow vortices}, Physics of Fluids, 23 (2011),
  p.~126602.

\bibitem{CrowdyKrish2018Compressible}
{\sc D.~G. Crowdy and V.~S. Krishnamurthy}, {\em The effect of core size on the
  speed of compressible hollow vortex streets}, J. Fluid Mech., 836 (2018),
  pp.~797--827.

\bibitem{Crowdy_2014Hollow}
{\sc D.~G. Crowdy and J.~Roenby}, {\em Hollow vortices, capillary water waves
  and double quadrature domains}, Fluid Dynamics Research, 46 (2014),
  p.~031424.

\bibitem{dancer1973bifurcation}
{\sc E.~N. Dancer}, {\em Bifurcation theory for analytic operators}, Proc.
  London Math. Soc. (3), 26 (1973), pp.~359--384.

\bibitem{dancer1973globalstructure}
\leavevmode\vrule height 2pt depth -1.6pt width 23pt, {\em Global structure of
  the solutions of non-linear real analytic eigenvalue problems}, Proc. London
  Math. Soc. (3), 27 (1973), pp.~747--765.

\bibitem{fayed2011visualization}
{\sc M.~Fayed, R.~Portaro, A.-L. Gunter, H.~A. Abderrahmane, and H.~D. Ng},
  {\em Visualization of flow patterns past various objects in two-dimensional
  flow using soap film}, Physics of Fluids, 23 (2011), p.~091104.

\bibitem{garcia2020streets}
{\sc C.~Garc\'{\i}a}, {\em K\'{a}rm\'{a}n vortex street in incompressible fluid
  models}, Nonlinearity, 33 (2020), pp.~1625--1676.

\bibitem{GarciaHaziot2023}
{\sc C.~Garc\'{\i}a and S.~V. Haziot}, {\em Global bifurcation for corotating
  and counter-rotating vortex pairs}, Comm. Math. Phys., 402 (2023),
  pp.~1167--1204.

\bibitem{hassainia2020global}
{\sc Z.~Hassainia, N.~Masmoudi, and M.~H. Wheeler}, {\em Global bifurcation of
  rotating vortex patches}, Comm. Pure Appl. Math., 73 (2020), pp.~1933--1980.

\bibitem{hassainia2022multipole}
{\sc Z.~Hassainia and M.~H. Wheeler}, {\em Multipole vortex patch equilibria
  for active scalar equations}, SIAM J. Math. Anal., 54 (2022), pp.~6054--6095.

\bibitem{hmidi2017pairs}
{\sc T.~Hmidi and J.~Mateu}, {\em Existence of corotating and counter-rotating
  vortex pairs for active scalar equations}, Comm. Math. Phys., 350 (2017),
  pp.~699--747.

\bibitem{DGLShollowstreet}
{\sc S.~Llewellyn~Smith and D.~Crowdy}, {\em Structure and stability of hollow
  vortex equilibria}, Journal of Fluid Mechanics, 691 (2012), pp.~178 -- 200.

\bibitem{nelson2021corotating}
{\sc R.~B. Nelson, V.~S. Krishnamurthy, and D.~G. Crowdy}, {\em The corotating
  hollow vortex pair: steady merger and break-up via a topological
  singularity}, J. Fluid Mech., 907 (2021), pp.~Paper No. A10, 21.

\bibitem{pocklington1895}
{\sc H.~C. Pocklington}, {\em The configuration of a pair of equal and opposite
  hollow straight vortices of finite cross-section, moving steadily through
  fluid}, Proc. Cambridge Philos. Soc., 8 (1895), pp.~178--187.

\bibitem{BasuStemler2015}
{\sc M.~A.~S. Saikat~Basu}, {\em On the motion of two point vortex pairs with
  glide-reflective symmetry in a periodic strip}, Physics of Fluids, 27 (2015).

\bibitem{2Pflappingfoil}
{\sc T.~Schnipper, A.~Andersen, and T.~Bohr}, {\em Vortex wakes of a flapping
  foil}, Journal of Fluid Mechanics, 633 (2009), pp.~411--423.

\bibitem{shatah2011interface}
{\sc J.~Shatah and C.~Zeng}, {\em Local well-posedness for fluid interface
  problems}, Arch. Rational Mech. Anal., 199 (2011), pp.~653--705.

\bibitem{Stemler2003equilibria}
{\sc M.~A. Stremler}, {\em Relative equilibria of singly periodic point vortex
  arrays}, Physics of Fluids, 15 (2003), pp.~3767--3775.

\bibitem{Stremler20142P}
{\sc M.~A. Stremler and S.~Basu}, {\em On point vortex models of exotic bluff
  body wakes}, Fluid Dynamics Research, 46 (2014), p.~061410.

\bibitem{Stemler2P2C}
{\sc M.~A. Stremler, A.~Salmanzadeh, S.~Basu, and C.~H. Williamson}, {\em A
  mathematical model of 2p and 2c vortex wakes}, Journal of Fluids and
  Structures, 27 (2011), pp.~774--783.
\newblock IUTAM Symposium on Bluff Body Wakes and Vortex-Induced Vibrations
  (BBVIV-6).

\bibitem{traizet2015hollow}
{\sc M.~Traizet}, {\em Hollow vortices and minimal surfaces}, J. Math. Phys.,
  56 (2015), pp.~083101, 18.

\bibitem{wan1988desingularizations}
{\sc Y.~H. Wan}, {\em Desingularizations of systems of point vortices}, Phys.
  D, 32 (1988), pp.~277--295.

\end{thebibliography}

\end{document}